\newcommand{\be}{\begin{eqnarray}}
\newcommand{\ee}{\end{eqnarray}}
\newtheorem{theo}{Theorem}[section]
\newtheorem{remark}[theo]{Remark}
\newtheorem{lemma}[theo]{Lemma}
\newtheorem{coro}[theo]{Corollary}
\newtheorem{defi}[theo]{Definition}
\newtheorem{prop}[theo]{Proposition}
\newcommand{\R}{\mathbb R}
\newcommand{\C}{\mathbb C}
\newcommand{\N}{\mathbb N}
\newcommand{\mG}{{\mathcal G}}
\newcommand{\mD}{{\mathcal D}}
\newcommand{\mB}{{\mathcal B}}
\newcommand{\mL}{{\mathcal L}}
\newcommand{\mZ}{{\mathcal Z}}
\newcommand{\mM}{{\mathcal M}}
\newcommand{\mK}{{\mathcal K}}
\newcommand{\mJ}{{\mathcal J}}
\newcommand{\mH}{{\mathcal H}}
\newcommand{\mT}{{\mathcal T}}
\newcommand{\mU}{{\mathcal U}}
\newcommand{\mW}{{\mathcal W}}
\newcommand{\mE}{{\mathcal E}}
\newcommand{\mS}{{\mathcal S}}
\newcommand{\mF}{{\mathcal F}}
\newcommand{\bxi}{\boldsymbol\xi}
\newcommand{\bomega}{\boldsymbol\omega}
\newcommand{\lxi}{{\langle \xi \rangle}}
\newcommand{\eps}{\epsilon}
\newcommand{\op}{\operatorname{op}}
\newcommand{\Op}{\operatorname{Op}}
\newcommand{\Rep}{\operatorname{Re}}
\newcommand{\supp}{\operatorname{supp}}
\newcommand{\beq}{\begin{equation}}
\newcommand{\eeq}{\end{equation}}
\numberwithin{equation}{section}
\begin{document}

\title{Global existence and decay of small solutions for quasi-linear second-order uniformly dissipative hyperbolic-hyperbolic systems}
\author{Matthias Sroczinski\thanks{Department of Mathematics, University of Konstanz, 
		78457 Konstanz, Germany. matthias.sroczinski@uni-konstanz.de,   https://orcid.org/0000-0002-5472-2741}}

\maketitle
\begin{abstract}

 This paper is concerned with quasilinear systems of partial differential equations consisting of two hyperbolic operators interacting dissipatively. Its main theorem establishes global-in-time existence and asymptotic stability of strong solutions to the Cauchy problem close to homogeneous reference states. Notably, the operators are not required to be symmetric hyperbolic, instead merely the existence of symbolic symmetrizers is assumed. The dissipation is characterized by conditions equivalent to the uniform decay of all Fourier modes at the reference state. On a technical level, the theory developed herein uses para-differential operators as its main tool. Apparently being the first to apply such operators in the context of global-in-time existence for quasi-linear hyperbolic systems, the present work contains new results in the field of para-differential calculus. In the context of theoretical physics, the theorem applies to recent formulations for the relativistic dynamics of viscous, heat-conductive fluids notably such as that of Bemfica, Disconzi and Noronha \cite{bem18} (\textit{Phys. Rev. D,} 98:104064, 2018.).\\

\textbf{Keywords.}  hyperbolic systems, initial value problem, global existence, asymptotic stability, para-differential operators, fluid mechanics\\

\textbf{AMS subject classifications.} Primary 35A01, 35B35, 35L72, 35L15, 35S50, 35Q35, 35Q75
\end{abstract}

\section{Introduction and main result}
In this paper, we study systems of partial differential equations that are given by the superposition of two hyperbolic operators and show that homogeneous states are nonlinearly stable in the sense that small perturbations thereof lead to global-in-time decaying solutions.
Concretely, we consider the Cauchy problem for quasi-linear systems of the form
\begin{align}\label{shhs1}
	\sum_{j=0}^d A^j(u(t,x))u_{x_j}(t,x)&=\sum_{j,k=0}^d (B^{jk}(u(t,x))u_{x_j}(t,x))_{x_k},\quad x_0=t \ge 0,~x=(x_1,\ldots,x_d)\in \R^d,\\
	\label{shhs2}
	u(0,x)&=u_0(x), \quad u_t(0,x)=u_1(x),\quad x \in \R^d,
\end{align}
where both the operator on the right hand side and the operator on the left hand side are hyperbolic and each of them acts dissipatively on the trajectories generated by the other one. Such systems occur in theroretical physics as recent formulations for the (special-)relativistic dynamics of viscous, heat conductive fluids \cite{ft14, ft16,  ft18, bem18, Fre20, bem21}. Our results apply to these formulations. The main theorem is the following.

\begin{theo}
	\label{theo:main}
	Consider $d \ge 3$, $s>d/2+1$, $\bar u \in \R^n$ and  let \eqref{shhs1} satisfy conditions (H$_A$), (H$_B$) and (D) from Section 3. Then there exist constants $\delta>0$ and $C=C(\delta)>0$ such that the following holds:
	For all $u_0,u_1$ with $u_0-\bar u  \in H^{s+1}(\R^d, \R^n)\cap L^1(\R^d,\R^n)$, $u_1 \in H^{s}(\R^d,\R^n)\cap L^1(\R^d,\R^n)$ as well as $\|u_0-\bar u\|_{H^{s+1}},\|u_1\|_{H^s},\|u-\bar u\|_{L^1}, \|u_1\|_{L^1} <\delta$ there exists a unique global solution $u$ of \eqref{shhs1}, \eqref{shhs2} satisfying $u-\bar u \in C([0,\infty),H^{s+1})\cap C^1([0,\infty),H^{s})$, $l=0,\ldots,s+1$ and, for all $t \in [0,\infty)$, 
	\begin{align}
		\label{eq:decay}
		\|u(t)-\bar u\|_{H^s}+\|u_t(t)\|_{H^{s-1}}  &\le C(1+t)^{-\frac{d}{4}}(\|u_0-\bar u\|_{H^s}+\|u_1\|_{H^{s-1}}+\|u_0-\bar u\|_{L^1}+\|u_1\|_{L^1}),\\
		\label{eq:energy}
		\|u(t)-\bar u\|_{H^{s+1}}^2+\|u_t(t)\|_{H^s}^2&+ \int_0^t\|u(\tau)-\bar u\|_{H^{s+1}}^2+\|u_t(\tau)\|_{H^s}^2~d\tau\\
		\nonumber
		&\le  C(\|u_0-\bar u\|_{H^{s+1}}^2+\|u_1\|_{H^s}^2
		+\|u_0-\bar u\|_{L^1}^2+\|u_1\|_{L^1}^2)
	\end{align}
\end{theo}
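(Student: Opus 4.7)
My plan is to establish Theorem \ref{theo:main} by a continuation argument that couples a high-order para-differential energy estimate with an $L^1$-to-$L^2$ decay estimate derived from the linearization at $\bar u$. Local-in-time existence is the starting point: paralinearizing \eqref{shhs1} around the unknown itself replaces the quasi-linear operators by para-differential ones with symbols $A^j(u),B^{jk}(u)$, and a contraction argument in $H^{s+1}\times H^s$ using the symbolic symmetrizers guaranteed by (H$_A$) and (H$_B$) produces a maximal solution together with a continuation criterion in terms of $\|u-\bar u\|_{H^{s+1}}+\|u_t\|_{H^s}$; global existence then reduces to a uniform a priori bound on this quantity.

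The core a priori estimate combines energy and dissipation. Setting $v=u-\bar u$ and paralinearizing around the solution itself, I rewrite \eqref{shhs1} as
\[
T_{A^j(u)}\partial_j v - \partial_k\,T_{B^{jk}(u)}\partial_j v = \mF,
\]
with a remainder $\mF$ of lower order, controllable by $\|v\|_{W^{1,\infty}}(\|v\|_{H^{s+1}}+\|v_t\|_{H^s})$ through Bony's paraproduct estimates. The symbolic symmetrizer $S(u,\xi)$ inherited from (H$_A$)-(H$_B$) gives rise, via para-differential quantization, to an energy $\mE(t)$ equivalent to $\|v\|_{H^{s+1}}^2+\|v_t\|_{H^s}^2$; condition (D) then yields a coercive dissipation $\mD(t)$ bounded below, modulo lower-order commutators, by the same norm weighted by $|\xi|^2/(1+|\xi|^2)$. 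The resulting differential inequality
\[
\tfrac{d}{dt}\mE(t)+c\,\mD(t)\le C\bigl(\|v\|_{W^{1,\infty}}+\|v_t\|_{L^\infty}\bigr)\mE(t)
\]
is then integrated and, once the $L^\infty$-factor is absorbed by bootstrap, delivers \eqref{eq:energy}.

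Decay is obtained from the linearization at $\bar u$, which is a constant-coefficient system. The Fourier transform together with the frozen symmetrizer shows $|\hat w(t,\xi)|\le C\,e^{-c\,\rho(\xi)\,t}|\hat w(0,\xi)|$ with $\rho(\xi)=|\xi|^2/(1+|\xi|^2)$, and interpolation with $\|\hat w_0\|_{L^\infty}\lesssim\|w_0\|_{L^1}$ yields the classical $(1+t)^{-d/4}$ rate in $L^2$. I then view the nonlinear problem as the linearization plus a quadratic source $\mN(v)$ and use Duhamel's formula to close a bootstrap on
\[
M(T):=\sup_{0\le t\le T}(1+t)^{d/4}\bigl(\|v(t)\|_{H^s}+\|v_t(t)\|_{H^{s-1}}\bigr):
\]
the quadratic nonlinearity produces contributions of the form $\int_0^t(1+t-\tau)^{-d/4}M(T)^2\,d\tau$ which remain $O((1+t)^{-d/4})$ provided $d\ge 3$, giving \eqref{eq:decay} in conjunction with the energy estimate.

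The most delicate step is the energy estimate in the para-differential framework: one must check that the symmetrizer applied to the full coupled second-order system still produces the required coercivity once commutator remainders, paralinearization errors, and the absence of pointwise symmetry of $A^j,B^{jk}$ are accounted for. Concretely, a sharp G\aa rding-type inequality adapted to the dissipation symbol coming from (D), together with a symbolic calculus accurate enough to transfer the uniform exponential decay of the symbol to Sobolev norms without losing derivatives, seems to be required. This is presumably where the new para-differential results announced in the abstract enter; once they are available, the rest of the scheme is a standard bootstrap tying \eqref{eq:energy} and \eqref{eq:decay} together.
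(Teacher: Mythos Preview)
Your scheme is essentially the paper's: decay via Duhamel from the constant-coefficient linearization at $\bar u$ (Corollary~\ref{cor:lin}, Proposition~\ref{prop:decay}), a top-order para-differential energy estimate driven by a dissipative symmetrizer and a sharp G\aa rding inequality (Propositions~\ref{lwn}, \ref{prop:gading}, \ref{prop:energy}), and a bootstrap tying the two together.

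One point deserves sharpening, because as written your energy inequality would not close for $d=3,4$. The symmetrizer is not simply ``inherited from (H$_A$)--(H$_B$)'' with (D) supplying dissipation afterward: the paper constructs a \emph{new} symbol $\mD_\infty(u,\bxi)$, defined only for $|\bxi|$ large, for which $\mD_\infty\mM+(\mD_\infty\mM)^*\le -c I$ already holds pointwise (Proposition~\ref{lwn}); condition (D2) enters in building $\mD_\infty$, not as a separate coercivity step. Because $\mD_\infty$ lives only at high frequencies, the energy inequality one actually obtains (Proposition~\ref{prop:energy}) is not $\frac{d}{dt}\mE+c\mD\le C\|v\|_{W^{1,\infty}}\mE$ but rather carries an \emph{additive} lower-order remainder $C\!\int_0^t(\|u\|_{H^s}^2+\|u_t\|_{H^{s-1}}^2)\,d\tau$ on the right. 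That term cannot be absorbed by smallness of a $W^{1,\infty}$ factor; it is controlled by squaring the decay estimate and integrating in time, which is exactly where the square-integrability of $(1+t)^{-d/4}$ for $d\ge 3$ is used. So the coupling between \eqref{eq:decay} and \eqref{eq:energy} is tighter than your sketch suggests: the decay estimate feeds the energy estimate through this lower-order integral, while the energy estimate feeds the decay bootstrap through the space-time bound $\int_0^T(\|u\|_{H^{s+1}}^2+\|u_t\|_{H^s}^2)\le\mu$.
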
 

While conditions (H$_A$) and (H$_B$) specify the assumed hyperbolicity, condition (D), essentially obtained in \cite{FS}, characterizes the needed decay behaviour for the Fourier modes of the associated linearized system. 

Based on the famous Kawashima-Shizuta condition \cite{kawa83, kawa85}, analogous results are well-known for symmetric hyperbolic-parabolic systems and first-order hyperbolic systems with relaxation, cf. \cite{che94, boi97, yon01, han03, kawa04, yon04, bia07} among others.\footnote{Note that the often available reformulations of \eqref{shhs1} as first-order hyperbolic systems do typically not satisfy the assumptions of these works.} Regarding  dissipative second-order hyperbolic systems there are  fewer results available, cf. notably \cite{dha11, liu13, rac17} and references therein, all of those treat systems whose structure is different form the one we consider here. The most prominent example for equations satisfying condition (D) are probably damped wave equations with a non-linear convection term, which alternatively can be viewed as conservation laws with hyperbolic artificial  viscosity. In this case, (D) reduces to Whitham's famous sub-characteristic condition \cite{whi74} and various in-depth results on the asymptotic behaviour of solutions have been achieved in this context, cf. \cite{Ori06, ued07, ued11, has12,che14, kat17}. Closest related to the present work are \cite{sro18, srot19, sro20}, there a predecessor of Theorem \ref{theo:main} was shown for the systems proposed in \cite{ft16,ft18}.

The  theory developed in the present work requires novel techniques in the use of para-differential operators. Developed by Bony \cite{bon81} and Meyer \cite{mey81, mey812}, such operators have been used in the context of hyperbolic equations by G\'{e}rard and Rauch \cite{GR}, Taylor \cite{T91} and M\'{e}tivier \cite{met01}. However, quite different from these works, we will in particular need to precisely understand how the norms of para-differential operators depending on the functions inducing their symbols. 

The paper is organized as follows. In the crucial Section 2 general results on para-differential operators needed for the argumentation in Section 3 and 4 will be established. The present work apparently being the first that uses such operators to treat global-in-time solutions to quasi-linear hyperbolic systems, we prove corresponding new results on that dependence. The technical highlight in this regard will be a modified version of the strong G\r{a}rding inequality. In Section 3 we construct a para-differential operator which is specifically associated with the system's dissipativity. Section 4 is dedicated to the proof of Theorem \ref{theo:main}. The challenging part is the treatment of the highest derivatives. Here we have to use the sophisticated estimates of Section 2 and the construction of Section 3. Finally, Section 5 shows that models of equations of dissipative relativistic fluid dynamics satisfy the assumptions of Theorem \ref{theo:main}.

\section{Results on para-differential operators}

A tour through the theory of para-differential operators from scratch to fine properties, this section relies on Appendix C of Benzoni-Gavage and Serre \cite{BS}  and Section 9 of Hörmander \cite{LH}, however with strong attention to symbols induced by what later will be the solution to the PDE system considered. In its initial part interpolating between brevity and legibility, the section culminates in the aforementioned novel version of the strong G\r{a}rding inequality.

\subsection{Notation, definitions and basics}

For topological vector-spaces $V,W$ we write $\mathcal L(V,W)$ for the space of continuous linear operators form $V$ to $W$ (or $\mathcal L(V)$ if $W=V$). Throughout this section consider fixed  dimensions $n,d \in \N$ and let $m$ denote some real number. For $x,\xi \in \R^d$ we just write $x\xi$ for their Euclidian scalar product.

Let $E$ be a finite-dimensional $\C$-Banach space. We denote the $E$-valued Schwartz space by $\mS(\R^d,E)$, and by $\mS'(\R^d,E):=\mathcal L(\mS(\R^d),E)$ the space of continuous linear mappings from $\mS(\R^d)$ to $E$, i.e. the space of $E$-valued temperate distributions, both equipped with the standard locally convex topologies. For $f \in \mS(\R^d,E)$ the Fourier transform is 
$$(\mathcal{F}f)(\xi)=\hat{f}(\xi)=(2\pi)^{-d/2}\int_{\R^d} f(x)\rm{e}^{-ix\xi}dx$$
with inverse
$$(\mathcal{F}^{-1}\hat{f})(x)=(2\pi)^{-d/2}\int_{\R^d}\hat f(\xi)\rm{e}^{ix\xi}d\xi.$$
We write $\mF_1$ and $\mF_2$ for the Fourier transform with respect to the first and the second variable for functions $f \in \mS(\R^d\times \R^d,E)$, i.e.
\begin{align*}
	(\mF_1f)(\eta,y)&=\mF(f(\cdot,y))(\eta)=(2\pi)^{-d/2}\int_{\R^d} f(x,y)\rm{e}^{-ix \eta}dx,\\
	 (\mF_2f)(x,\xi)&=\mF(f(x,\cdot))(\xi)=
	(2\pi)^{-d/2}\int_{\R^d} f(x,y)\rm{e}^{-iy\xi}dy.
\end{align*}
As usual we extend $\mF,\mF_1,\mF_2$  to continuous operators on   $\mS'(\R^d,E)$, $\mS'(\R^d \times \R^d,E)$ and unitary operators on $L^2(\R^d,E),L^2(\R^d\times\R^d,E)$ also denoted by $\mF, \mF_1, \mF_2$. 

We will use $\lxi:=(1+|\xi|^2)^{\frac12}$, $\xi \in \R^d$,  $\Lambda^m:=\mF^{-1}\langle \cdot \rangle^m\mF$.
As usual 
$$H^m(\R^d,E):=\{u \in L^2(\R^d,E): \Lambda^m u \in L^2(\R^d,E)\},$$
are the $L^2$-based $E$-valued Sobolev spaces with norm
$$\|u\|_{H^m(\R^d,E)}:=\|\Lambda^m u\|_{L^2(\R^d,E)}.$$
If $E$ is a Hilbert space we consider the scalar product on $H^m(\R^d,E)$
$$\langle u,v\rangle_{H^m(\R^d,E)}:=\langle \Lambda^m u,\Lambda^m v\rangle_{L^2(\R^d,E)}.$$

We also use $L^\infty$-based Sobolev spaces
$$W^{k,\infty}(\R^d,E):=\{u \in L^\infty(\R^d,E): \partial_x^\alpha u \in L^\infty(\R^d,E), ~|\alpha| \le k\}$$
with norm 
$$\|u\|_{W^{k,\infty}(\R^d,E)}=\max_{|\alpha| \le k}\|\partial_x^\alpha u\|_{L^\infty(\R^d,E)}.$$

We often just write $H^m$, $\|u\|_m$, $\langle u,v \rangle_m$, $W^{k,\infty}$ instead of $H^m(\R^d,E)$, $\|u\|_{H^m(\R^d,E)}$, $\langle u,v \rangle_{H^m(\R^d,E)}$, $W^{k,\infty}(\R^d,E)$ if there is no concern for confusion, and $\|u\|$ for $\|u\|_0$.

For $A \in \C^{n \times n}$ we denote the adjoint matrix by $A^*=\bar A^t$ and for $T \in \mL(\mS(\R^d,\C^n))$ we write $T^*$ for the adjoint operator with respect to the $L^2(\R^d,\C^n)$ inner product. As usual we call $T$ self-adjoint if $T=T^*$ and positive (strictly positive) if $\langle Tf,f\rangle_0 \ge 0$ ($\langle Tf,f \rangle >0$), in which case we also write $T\ge0$ ($T>0$).

Next, we turn to the basic definitions concerning pseudo-differential operators which will be used in the present paper. We consider the following symbol classes.
\begin{defi}
	\begin{enumerate}
		\item[(i)]
	 $S^m:=S^m(\R^d,\C^{n \times n})$ is the set of all functions $a \in C^\infty(\R^d \times \R^d, \C^{n \times n})$ for which for any $\alpha,\beta \in \N_0$ there exists $C_{\alpha\beta}>0$ such that
	\begin{equation}
		\label{eq:semin}
		|\partial_x^\beta\partial_\xi^\alpha a(x,\xi)| \le C_{\alpha\beta}\lxi^{m-|\alpha|}.
	\end{equation}
	With semi-norms being the optimal constants in \eqref{eq:semin},
	$S^m$ is a Fr\'{e}chet space.
\item[(ii)]
 $S^m_{1,1}:=S^m_{1,1}(\R^d,\C^{n\times n})$ is the set of functions $a \in C^\infty(\R^d\times \R^d)$ for which for any $\alpha,\beta \in \N_0^d$ there exist $C_{\alpha\beta}>0$ such that 
\begin{equation}
	\label{sym2}
	|\partial_x^\beta \partial_\xi^\alpha a(x,\xi)| \le C_{\alpha\beta}\lxi^{m-|\alpha|+|\beta}
\end{equation}
for all $(x,\xi) \in \R^d\times \R^d$. With semi-norms being the optimal constants in \eqref{sym2}, $S^m_{1,1}$ is a Fr\'{e}chet space.
\item[(iii)]
For $a \in S^m_{1,1}$ the mapping $\op[a] \in \mL(\mS(\R^d,\C^n))$ defined by
	\begin{equation}
		\label{eq:pseuop}
		(\op[a]f)(x):=(2\pi)^{-\frac{d}{2}}\int {\rm e}^{ix\xi}a(x,\xi)\mF f(\xi)~d\xi.
	\end{equation}
	is called the pseudo-differential operator with symbol $a$. We also write $a:=\operatorname{Sym}[\op[a]]$.
\end{enumerate}
\end{defi}
As first shown in \cite{bou83,bou88} for $a \in S^{m}_{1,1}$ the operator $\op[a]$ extends to a bounded operator from $H^{l+m}$ to $H^l$ only if $\op[a]^*$ also has a symbol in $S_m^{1,1}$. But the operator norm of $\op[a]$ can in general not be controlled by semi-norms of $a$ uniformly over this subspace. As for our applications to dissipative hyperbolic systems it is essential that the norm of $\op[a]$ is small if the semi-norms of $a$ are small we have to make sure that the symbols occurring in the present work belong to the following smaller subspaces.
\begin{defi}
For $L \in (0,1]$,  $S^{m,L}_{1,1}$  is the subspace of all $a \in S^m_{1,1}$ such that $\mF_1a$ vanishes on $N_L:=\{(\eta,\xi) \in \R^d \times \R^d: |\eta+\xi| +1<L|\xi|\}$ in the sense of distributions, i.e. 
\begin{equation}
\label{etaxi}
a(\mF_1\phi)=0~~ \text{for all } \phi \in \mS(\R^d \times \R^d)~\text{with } \supp \phi \subset N_L.
\end{equation}
\end{defi}

\begin{prop}
	\label{prop:Sm11}
	Let $L \in (0,1]$. For all $l \in \R$ and $a \in S^{m,L}_{1,1}$ $\op[a]$ extends to a continuous operator form $H^{l+m}$ to $H^l$ and $\op$ is itself continuous from $S^{m,L}_{1,1}$ to $\mathcal L(H^{l+m}, H^l)$. 
\end{prop}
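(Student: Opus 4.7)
The plan is to exploit the spectral support condition to reduce the statement to the $L^2$-boundedness of $\operatorname{op}[a]$ for $a\in S^{0,L}_{1,1}$, and then to obtain that boundedness via a dyadic decomposition combined with (a rescaled form of) the classical Calder\'on--Vaillancourt theorem.

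First I would reduce to $l=m=0$. Since $\Lambda^{s}=\operatorname{op}[\lxi^{s}]$ is a pure Fourier multiplier depending only on $\xi$, multiplying $a$ by $\lxi^{-l-m}$ on the right produces a symbol in $S^{-l,L}_{1,1}$ whose spectral support in the first variable is unchanged, and composition with $\Lambda^{l}$ on the left can be absorbed by symbol calculus (one verifies directly that the composed symbol sits in $S^{0,L'}_{1,1}$ for some $L'$ comparable to $L$, with continuity of the relevant seminorms). This leaves proving that $\operatorname{op}\colon S^{0,L}_{1,1}\to\mathcal L(L^{2})$ is continuous.

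Second, compute the Fourier-side kernel: for $f\in\mS(\R^{d},\C^{n})$,
\[
\widehat{\operatorname{op}[a]f}(\eta)=(2\pi)^{-d/2}\int (\mF_{1}a)(\eta-\xi,\xi)\,\hat f(\xi)\,d\xi.
\]
The substitution $\eta'=\eta-\xi$ in the spectral condition \eqref{etaxi} shows that this kernel is supported in $\{|\eta|+1\ge L|\xi|\}$, so $\lxi\le (3/L)\langle\eta\rangle$ on its support. Now decompose $a=\sum_{k\ge 0}a_{k}$ with $a_{k}(x,\xi)=a(x,\xi)\chi(2^{-k}\xi)$ for a standard Littlewood--Paley partition $\chi$; then $\operatorname{op}[a_{k}]f=\operatorname{op}[a_{k}]P_{k}f$ and the output is Fourier-supported in $\{\langle\eta\rangle\gtrsim L 2^{k}\}$. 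After the rescaling $(x,\xi)\mapsto(2^{-k}x,2^{k}\xi)$ the $S^{m}_{1,1}$ seminorm bounds $|\partial_{x}^{\beta}\partial_{\xi}^{\alpha}a_{k}|\le C\,2^{k(m+|\beta|-|\alpha|)}$ become uniform in $k$, so the rescaled symbols lie in a bounded set of classical $S^{0}$; by Calder\'on--Vaillancourt, $\|\operatorname{op}[a_{k}]\|_{L^{2}\to L^{2}}\le C\nu(a)$ uniformly in $k$, for a fixed seminorm $\nu$ of $a$.

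Third, assemble the pieces. On the input side the $P_{k}f$ are $L^{2}$-orthogonal. On the output side the Fourier supports of the $\operatorname{op}[a_{k}]P_{k}f$ overlap only for comparable $k$, up to off-diagonal terms which are controlled by applying $(1-\Delta_{x})^{N}$ to $a$: this gains $\langle\eta-\xi\rangle^{-N}$ at the price of a factor $\lxi^{2N}$, but the factor is absorbed on the dyadic shell $|\xi|\sim 2^{k}$. A Cotlar--Stein or direct almost-orthogonality argument then sums the estimates to $\|\operatorname{op}[a]f\|_{L^{2}}\le C\nu(a)\|f\|_{L^{2}}$, and the dependence is continuous in the Fr\'echet topology of $S^{0,L}_{1,1}$ since only finitely many seminorms enter.

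The main obstacle is the last step: the output Fourier support is only a half-space $\{\langle\eta\rangle\gtrsim L 2^{k}\}$ rather than a true dyadic annulus, so orthogonality does not come for free from Plancherel. Extracting usable quasi-orthogonality requires the Fourier decay in $|\eta-\xi|$ coming from the $x$-smoothness of $a$ (i.e., from the seminorms with $|\beta|>0$), together with a careful accounting to ensure that the constant in the final bound depends only on finitely many seminorms of $a$---this is exactly what is needed for the continuity of $\operatorname{op}$ as a map of Fr\'echet spaces.
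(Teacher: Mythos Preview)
The paper does not give its own proof here; it simply cites H\"ormander, \cite{LH}, Proposition~9.3.1. So there is no ``paper approach'' to compare against beyond that reference.

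Your sketch is in the right spirit (dyadic decomposition in $\xi$, rescaling to a uniform $S^0$ family, almost-orthogonality from the spectral condition), but the reduction to $l=m=0$ as you state it does not work. Right-multiplication by $\lxi^{-l-m}$ is fine: $a\lxi^{-l-m}\in S^{-l,L}_{1,1}$ with the same spectral support. The problem is the left composition. The symbol $c$ of $\Lambda^{l}\op[b]$ satisfies $(\mF_1 c)(\eta',\xi)=\langle\eta'+\xi\rangle^{l}(\mF_1 b)(\eta',\xi)$, and for a general $b\in S^{-l,L}_{1,1}$ there is no upper bound on $|\eta'|$ on the support of $\mF_1 b$; the spectral condition only gives the \emph{lower} bound $|\eta'+\xi|+1\ge L|\xi|$. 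Hence $\langle\eta'+\xi\rangle^{l}$ is not controlled by a power of $\lxi$, and $c$ need not lie in any $S^{0,L'}_{1,1}$. ``Absorbing $\Lambda^{l}$ by symbol calculus'' therefore invokes a calculus that is not available for $S^m_{1,1}$.

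The clean fix is to skip the reduction to $l=0$ and prove $\op[b]\colon H^{s}\to H^{s}$ directly for $b\in S^{0,L}_{1,1}$ and every $s\in\R$, using the Littlewood--Paley characterisation $\|u\|_{H^{s}}^{2}\sim\sum_{j}2^{2js}\|\Delta_{j}u\|_{L^{2}}^{2}$ on both sides. With $b_k$ the dyadic pieces in $\xi$ one has $\op[b_k]f=\op[b_k]\tilde\Delta_k f$; on the output side the spectral condition forces $j\ge k-C(L)$, while the decay $(1+2^{-k}|\eta-\xi|)^{-N}$ (your own suggested use of the $x$-smoothness, legitimate since $\partial_x^\beta b_k$ stays uniformly in $S^0_{1,1}$ after the $2^{-k|\beta|}$ rescaling) gives $\|\Delta_j\op[b_k]\tilde\Delta_k f\|\lesssim 2^{-N(j-k)_+}\|\tilde\Delta_k f\|$. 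Summing the resulting almost-diagonal estimate in $j,k$ yields the $H^s$ bound for all $s$, with constants depending only on finitely many seminorms of $b$. This is essentially H\"ormander's route; your outline becomes correct once the left reduction is abandoned in favour of this direct $H^s$ argument.
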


\begin{proof}
	Cf. \cite{LH}, Proposition 9.3.1.
\end{proof}

The symbols in Sections 2 and 3 will be induced by functions $(x,\xi) \mapsto F(u(x),\xi)$ where $F \in C^{\infty}(\R^n \times \R^d)$, $u \in W^{k,\infty}(\R^d,\R^n)$ for some $k \in \R$, i.e. they belong to the following symbol class.

\begin{defi}
	For any $k \in \N_0$ the
 set $\Gamma^m_k$ of symbols of order $m$ with regularity $k$ is  the set of functions $A: \R^d \times \R^d \mapsto \C^{n\times n}$ such that,
	\begin{enumerate}
		
		\vspace{-1.5\baselineskip}
	\item[(i)]
	for almost all $x \in \R^d$ the mapping $\xi \mapsto A(x,\xi)$ is in $C^\infty(\R^d,\C^{n\times n})$
	\item[(ii)]
	
	\vspace{-0.5\baselineskip}
	for any $\alpha \in \N_0^d$ and $\xi \in \R^d$ the mapping $x  \mapsto \partial_\xi^\alpha A(x,\xi)$ belongs
	to $W^{k,\infty}(\R^d,\C^{n \times n})$ and there exists $C_\alpha > 0$ not depending on $\xi$ such that
	\begin{equation}
		\label{eq:semingamma}
		\|\partial^\alpha_\xi A(\cdot,\xi)\|_{W^{k,\infty}} \le C_\alpha \lxi^{m-|\alpha|}.
	\end{equation}
With the semi-norms being the optimal constants in \eqref{eq:semingamma},
$\Gamma^m_k$ is a Fr\'{e}chet space.
\end{enumerate}
\end{defi}

Para-differential operators associated with symbols in $\Gamma^{m}_k$ are  defined as follows.

\begin{defi}
	\label{lem:chi}
	For $\eps=(\eps_1,\eps_2)$ with $0<\eps_1<\eps_2 <1$ we call a function $\chi\in C^\infty(\R^d\times \R^d)$ an admissible $\eps$-cut-off if $\chi$ is even with respect to each variable, $\chi(\R^d \times \R^d) \subset [0,1]$, 
	\begin{equation}
		\label{supchi}
		\chi(\eta,\xi)=\begin{cases}
		1,&|\eta| \le \eps_1|\xi|~\text{and}~|\xi| \ge 1\\
		0,& |\eta|\ge \eps_2\lxi~\text{or}~|\xi| \le \eps_2
		\end{cases}	
	\end{equation}
 for all $\eta, \xi \in \R^d$ and for all $\alpha,\beta \in \N^d$  there exists $C_{\alpha,\beta}>0$ such that for all $\xi,\eta \in \R^d$
$$
|\partial_\eta^\beta\partial_\xi^\alpha\chi(\eta,\xi)| \le C_{\alpha,\beta}\lxi^{-|\alpha|-|\beta|}.$$
\end{defi}
 
\begin{prop}
	\label{prop:paradiff}
	Let $\chi$ be an admissible $\eps$-cut-off. Set $K^\chi:=\mF_1^{-1}(\chi)$
	and consider the function $R^\chi:\Gamma_k^m \to C^\infty(\R^d\times \R^d)$ given by
	$$R^\chi(A):=K^\chi \ast_1 A,~~A \in \Gamma_k^m.$$
	Then $R^\chi$ defines a bounded linear operator from $\Gamma_k^m$ to $S^{m,1-\eps_2}_{1,1} \cap \Gamma_k^m$. Here
	$$(K^\chi \ast_1 A)(x,\xi)=\int_{\R^d} K^\chi(x-y,\xi) A(y,\xi)~dy.$$
\end{prop}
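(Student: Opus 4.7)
The plan is to establish three properties: (i) the symbol $R^\chi(A)$ lies in $S^m_{1,1}$ with continuous dependence on $A \in \Gamma^m_k$, (ii) it satisfies the spectral localization that characterizes $S^{m,1-\eps_2}_{1,1}$, and (iii) it lies in $\Gamma^m_k$ with continuous dependence. For (ii), I would start from the convolution theorem $\mF_1[R^\chi(A)](\eta,\xi)=(2\pi)^{d/2}\chi(\eta,\xi)\mF_1A(\eta,\xi)$ (valid in $\mS'$ since $K^\chi(\cdot,\xi)$ is Schwartz in $x$ for each $\xi$ while $A(\cdot,\xi)$ is tempered) and check that $\chi$ vanishes \emph{pointwise} on $N_{1-\eps_2}$: if $|\eta+\xi|+1 < (1-\eps_2)|\xi|$, then $|\eta|\ge|\xi|-|\eta+\xi| > \eps_2|\xi|+1 \ge \eps_2\lxi$ by $\lxi\le 1+|\xi|$, so that branch of \eqref{supchi} activates. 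Hence $\chi\phi\equiv 0$ for every $\phi\in\mS$ with $\supp\phi\subset N_{1-\eps_2}$, which via duality gives \eqref{etaxi}.

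The technical heart is a uniform $L^1$-bound
\[
\|\partial_x^\beta\partial_\xi^\alpha K^\chi(\cdot,\xi)\|_{L^1(\R^d)} \le C_{\alpha,\beta}\lxi^{|\beta|-|\alpha|}.
\]
I would obtain it by a scaling argument: writing
$\partial_x^\beta\partial_\xi^\alpha K^\chi(x,\xi) = (2\pi)^{-d/2}\int e^{ix\eta}(i\eta)^\beta\partial_\xi^\alpha\chi(\eta,\xi)\,d\eta$
and changing variables $\eta=\lxi\tilde\eta$, the rescaled amplitude $\tilde\eta\mapsto (i\lxi\tilde\eta)^\beta\partial_\xi^\alpha\chi(\lxi\tilde\eta,\xi)$ is supported in $|\tilde\eta|\le\eps_2$ and, by the derivative bounds on $\chi$ in Definition~2.4, has all $\tilde\eta$-derivatives bounded by $C\lxi^{|\beta|-|\alpha|}$ uniformly in $\xi$. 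Integration by parts in $\tilde\eta$ via $(1-\Delta_{\tilde\eta})^N$ yields
$|\partial_x^\beta\partial_\xi^\alpha K^\chi(x,\xi)|\le C_N\lxi^{d+|\beta|-|\alpha|}(1+\lxi^2|x|^2)^{-N}$,
and substituting $y=\lxi x$ in the $L^1$-integral gives the claimed bound.

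With that in hand, properties (i) and (iii) both reduce to Young's inequality. Differentiating $R^\chi(A)(x,\xi)=\int K^\chi(x-y,\xi)A(y,\xi)\,dy$ in $\xi$ by Leibniz, then applying Young gives
\[
|\partial_x^\beta\partial_\xi^\alpha R^\chi(A)(x,\xi)| \le \sum_{\alpha_1+\alpha_2=\alpha}\binom{\alpha}{\alpha_1}\|\partial_x^\beta\partial_\xi^{\alpha_1}K^\chi(\cdot,\xi)\|_{L^1}\|\partial_\xi^{\alpha_2}A(\cdot,\xi)\|_{L^\infty},
\]
which by the $L^1$-bound and $\|\partial_\xi^{\alpha_2}A(\cdot,\xi)\|_{L^\infty}\le C\lxi^{m-|\alpha_2|}$ yields the $S^m_{1,1}$ estimate \eqref{sym2}. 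For (iii), I would instead shift the $x$-derivatives of order $|\beta|\le k$ onto $A$ by integration by parts in the convolution, using the $W^{k,\infty}$ regularity of $\partial_\xi^{\alpha_2}A$, which gives $\|\partial_\xi^{\alpha_1}K^\chi(\cdot,\xi)\|_{L^1}\|\partial_x^\beta\partial_\xi^{\alpha_2}A(\cdot,\xi)\|_{L^\infty}\le C\lxi^{m-|\alpha|}$, i.e. \eqref{eq:semingamma}. Linearity is clear from the convolution, and continuity of $R^\chi$ on the Fr\'echet spaces is read off the explicit constants, which are polynomial in the $\Gamma^m_k$-seminorms of $A$. The main obstacle is the $L^1$-bound on $K^\chi$; a naive $L^\infty$ estimate would not be integrable in $x$, so the rescaling to the natural frequency scale $\lxi$ is essential.
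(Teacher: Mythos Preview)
Your proposal is correct and follows essentially the same approach as the paper. The paper delegates parts (i) and (iii) to \cite{BS}, Proposition~C.16, whose content is precisely the $L^1$-bound on $\partial_x^\beta\partial_\xi^\alpha K^\chi(\cdot,\xi)$ via scaling plus Young's inequality that you spell out; for part (ii) the paper gives exactly your triangle-inequality computation showing $\chi$ vanishes on $N_{1-\eps_2}$.
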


\begin{proof}
Apart from the aspect that $a$ is not only in $S^{m}_{1,1}$ but even in $S^{m,1-\eps_2}_{1,1}$ the proof can be found in \cite{BS}, Proposition C.16. But that aspect follows in a straightforward manner as $|\eta+\xi| +1\le (1-\eps_2)|\xi|$ implies $|\xi|-|\eta|+1 \le (1-\eps_2)|\xi|$ and thus $|\eta| \ge \eps_2 \lxi$ and $\chi$ vanishes for such $\eta,\xi$.
\end{proof}

\begin{defi}
	Let $\chi$ be an admissible  $\eps$-cut-off. For $A \in \Gamma_k^m$ the ($\chi$-)para-differential operator with symbol $A$ is defined by
	$$\Op_\chi[A]:=\op[R^\chi(A)].$$
	As $R^\chi \in \mL(\Gamma^m_k,S^{m,1-\eps_2}_{1,1})$, $\Op_\chi=\op \circ R^\chi$ defines a continuous linear operator from $\Gamma^m_k$ to $\mathcal L(H^{l+m},H^l)$ ($l \in \R$). In particular the  $\mathcal L(H^{l+m},H^l)$-norm of $\Op_\chi[A]$ can be estimated by a constant depending on $l,\chi$ and a finite sum of $\Gamma_m^k$-semi-norms of $A$.
\end{defi}

The following shows that, regarding its dependence on $\chi$, $\op_\chi[A]$ is determined by $A$ up to a lower order operator, if $k \ge 1$.

\begin{lemma}
\label{lem:error}
Let $\chi$ be an admissible $\eps$-cut-off and $k\ge 1$. Then the following holds:

\vspace{-\baselineskip}
\begin{enumerate}
	\item[(i)]
	The mapping $R^\chi-\operatorname{Id}$ is a continuous operator from $\Gamma_k^m$ to $\Gamma^{m-1}_{k-1}$. 
	\item[(ii)]
	If $\tilde{\chi}$ is an admissible $\tilde{\eps}$-cut-off, then $R^\chi-R^{\tilde{\chi}}$ is a continuous operator from $\Gamma^m_k$ to $S^{m-1,1-\tau}_{1,1} \cap \Gamma^{m-1}_{k-1}$ with $\tau=\max\{\eps_2,\tilde{\eps_2}\}$.
\end{enumerate}
\end{lemma}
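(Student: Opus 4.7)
For part (i), my plan is to analyze $R^\chi(A) - A$ through its convolution kernel. Since $\chi(0, \xi) = 1$ for $|\xi| \ge 1$, we have $\int K^\chi(z, \xi) \, dz = 1$, and hence for such $\xi$
\begin{equation*}
	R^\chi(A)(x, \xi) - A(x, \xi) = \int K^\chi(x - y, \xi)\bigl[A(y, \xi) - A(x, \xi)\bigr] \, dy.
\end{equation*}
A first-order Taylor expansion $A(y, \xi) - A(x, \xi) = \sum_i (y - x)_i \int_0^1 \partial_{x_i} A(x + t(y - x), \xi) \, dt$, which is precisely where the hypothesis $k \ge 1$ enters, exhibits a linear factor $(y - x)$ that, via the first-moment estimate $\int |z_j K^\chi(z, \xi)| \, dz \le C \lxi^{-1}$, buys one power of $\lxi$. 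The moment estimate itself follows by rescaling $\eta = \lxi \eta'$ on the Fourier side, exploiting that $\chi(\cdot, \xi)$ has $\eta$-support in $\{|\eta| \le \eps_2 \lxi\}$ with uniformly bounded derivatives. Distributing $\partial_\xi^\alpha$ via Leibniz and using the analogous bounds $\int |z_j \partial_\xi^{\alpha_1} K^\chi(z, \xi)| \, dz \le C \lxi^{-1-|\alpha_1|}$ together with the $\Gamma^m_k$-estimate on $\partial_\xi^{\alpha_2} A$ yields the desired $\Gamma^{m-1}_{k-1}$-bound. The complementary regime $|\xi| \le 1$ is handled by direct $L^\infty$-estimates on $A$ and on the convolution $K^\chi *_1 A$.

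For part (ii), the membership in $\Gamma^{m-1}_{k-1}$ is immediate by writing $R^\chi - R^{\tilde\chi} = (R^\chi - \operatorname{Id}) - (R^{\tilde\chi} - \operatorname{Id})$ and invoking (i) twice. For $S^{m-1, 1-\tau}_{1,1}$, I would first verify the Fourier-support condition: on $N_{1-\tau}$ one has $|\eta| \ge |\xi| - |\eta + \xi| > \tau|\xi| + 1 \ge \tau \lxi$, hence $|\eta| \ge \eps_2 \lxi$ and $|\eta| \ge \tilde\eps_2 \lxi$, so both $\chi$ and $\tilde\chi$ vanish by the second branch of \eqref{supchi}, and therefore $(\chi - \tilde\chi)\,\mF_1 A = \mF_1\bigl(R^\chi(A) - R^{\tilde\chi}(A)\bigr)$ vanishes on $N_{1-\tau}$. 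The improved $S^{m-1}_{1,1}$-estimates then follow by rerunning the Taylor-plus-moment argument with the kernel $\tilde K := K^\chi - K^{\tilde\chi}$, whose integral in $z$ vanishes for $|\xi| \ge 1$ because $\chi(0, \xi) = \tilde\chi(0, \xi) = 1$ there. To cover all $\partial_x^\beta$, one uses that $\int \partial_x^\beta \tilde K(x - y, \xi) \, dy = 0$ (the $y$-integral being a $\xi$-dependent constant in $x$) before Taylor-expanding, together with the scaling estimate $\int |z_j \partial_x^\beta \tilde K(z, \xi)| \, dz \le C \lxi^{|\beta| - 1}$.

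The main technical obstacle is precisely this last step: the $\Gamma^{m-1}_{k-1}$-bound from (i) covers only $|\beta| \le k - 1$, whereas $S^{m-1}_{1,1}$ requires estimates for arbitrary $\beta$. The Fourier-side scaling estimate on $\tilde K$ — balancing the $\lxi^{|\beta|}$ loss from $\partial_x^\beta$ against the $\lxi^{-1}$ gain from the first moment — must deliver the sharp $\lxi^{m-1-|\alpha|+|\beta|}$-bound simultaneously with the support condition, thereby producing the sought-after $S^{m-1, 1-\tau}_{1,1} \cap \Gamma^{m-1}_{k-1}$ membership. This coupling of order gain and Fourier localization, uniform in the multi-indices, is the only delicate point; the rest is a routine continuity check in the Fr\'echet topology of $\Gamma^m_k$.
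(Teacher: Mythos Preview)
Your proposal is correct and follows the standard kernel-moment plus Taylor-expansion route; this is precisely the argument in Benzoni-Gavage--Serre to which the paper defers (the paper gives no proof of its own, simply citing \cite{BS}, Proposition C.13 and Corollary C.5). Your identification of the one genuinely delicate point---upgrading the $\Gamma^{m-1}_{k-1}$-bound to $S^{m-1}_{1,1}$ for arbitrary $\beta$ via the scaled moment estimate $\int |z_j\,\partial_z^\beta \tilde K(z,\xi)|\,dz \le C\lxi^{|\beta|-1}$ on the difference kernel---is accurate, and your resolution is sound.
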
	

\begin{proof}
	Cf. \cite{BS}, Proposition C.13, Corollary C.5.
\end{proof}

We end this subsection by stating two additional results on para-differential operators for later usage. The proofs are contained in \cite{BS}, Appendix C. To simplify notation we fix an admissible $\eps$-cut-off $\chi$ and suppress the dependence of $R^\chi$ and $\Op_\chi$ on $\chi$ in the following. We call an operator $K$ infinitely smoothing if $K \in \mathcal{L}(H^s,H^l)$ for all $s,l \in \R$.
\begin{lemma}
	\label{lem:is2}
	Let $b \in S^m$ be constant with respect to the first variable. Then the following holds:
	\begin{enumerate} 
		
	\vspace{-\baselineskip}
	\item[(i)]
	$\op(b)-\Op[b]$ is infinitely smoothing.
	\item[(ii)] $\Op[b]=\Op[b^*]$
	 \item[(iii)]$\Op[Ab]=\Op[A]\mF^{-1}b\mF$ for any $A \in \Gamma^{\mu}_k$.
	\end{enumerate}
\end{lemma}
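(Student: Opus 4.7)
The strategy for all three parts rests on a single observation: when $b$ depends only on $\xi$, the factor $b(\xi)$ pulls out of the $y$-convolution defining $R^\chi$, so
\begin{equation*}
	R^\chi(b)(x,\xi) \;=\; b(\xi)\int K^\chi(x-y,\xi)\,dy \;=\; c(\xi)\,b(\xi),
\end{equation*}
where $c(\xi) := \int K^\chi(z,\xi)\,dz$ depends only on $\xi$; in particular $\Op[b]=\op[cb]$ is itself a Fourier multiplier. Fourier inversion identifies $c(\xi)$ with $\chi(0,\xi)$ up to the normalization constant dictated by the chosen conventions for $\mF$ and the convolution, and the defining support properties of $\chi$ give $\chi(0,\xi)=1$ for $|\xi|\ge 1$. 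Hence $c\equiv 1$ on $\{|\xi|\ge 1\}$ and the difference $R^\chi(b)-b$ is a smooth function supported in $\{|\xi|\le 1\}$, so belongs to $\bigcap_m S^m$.

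Part (i) then follows at once: $\op(b)-\Op[b]=\op[b-R^\chi(b)]$ is a Fourier multiplier with symbol in $\bigcap_m S^m$, and Proposition~2.3 makes it a continuous map $H^s\to H^l$ for every $s,l\in\R$, i.e.\ infinitely smoothing. For (ii), read as the adjoint identity $\Op[b]^*=\Op[b^*]$, I will use that the $L^2$-adjoint of any matrix-valued Fourier multiplier $\op[m(\xi)]$ is $\op[m(\xi)^*]$, so $\Op[b]^*=\op[(cb)^*]=\op[\bar c\,b^*]$. The evenness of $\chi$ in its first variable makes $K^\chi$ real-valued, hence $c$ is real, and so $\op[\bar c\,b^*]=\op[cb^*]=\Op[b^*]$.

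For (iii), the same factorization yields $R^\chi(Ab)(x,\xi)=R^\chi(A)(x,\xi)\,b(\xi)$, and it suffices to establish the composition rule $\op[c(x,\xi)\,b(\xi)]=\op[c]\circ \op(b)$ whenever $b$ depends only on $\xi$. This is immediate from the oscillatory-integral definition, since $\widehat{\op(b)f}(\xi)=b(\xi)\hat f(\xi)$ inserted into the formula for $\op[c]$ reproduces the operator with symbol $c\,b$. Applying this with $c=R^\chi(A)$ gives $\Op[Ab]=\op[R^\chi(A)\,b]=\Op[A]\circ \mF^{-1}b\mF$, which is the claim.

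No significant obstacle is foreseen; the argument merely unravels the definitions together with Fourier inversion. The one subtlety worth attention is the distributional interpretation of $\mF_1 b$ (a measure supported at $\eta=0$ because $b$ is $x$-independent) when justifying the formula for $c(\xi)$; this is harmless since $K^\chi(\cdot,\xi)$ is Schwartz in $x$ for each fixed $\xi$, thanks to the compact $\eta$-support of $\chi$.
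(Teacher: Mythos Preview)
Your argument is correct. The key identity $R^\chi(b)(x,\xi)=\chi(0,\xi)\,b(\xi)$ for $x$-independent $b$ (which is what your $c(\xi)$ equals, up to the normalization issue you flag) immediately gives all three parts, and your treatment of each is sound; for (ii) one does not even need the evenness of $\chi$ in $\eta$, since $\chi$ is real-valued by hypothesis, so $c(\xi)=\chi(0,\xi)\in[0,1]$ is trivially real.

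As for comparison with the paper: the paper does not supply its own proof of this lemma, but simply refers the reader to Appendix~C of Benzoni-Gavage and Serre. Your argument is essentially the standard one that would be found there, so there is no genuine methodological difference to discuss.
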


\begin{lemma}
	\label{lem:diffmult}
	For each $k >0$ there exists $C>0$ such that for all $f \in L^\infty \cap H^k, A \in W^{1,\infty} \cap H^k$
	\begin{align*}
		\|A-\Op[A]f\|_k &\le C(\|A\|_{H^k}\|f\|_{L^\infty}+\|A\|_{W^{1,\infty}}\|f\|_{H^{k-1}}).
	\end{align*}
\end{lemma}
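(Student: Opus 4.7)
The displayed estimate is the classical comparison between multiplication by $A$ and the paraproduct-style operator $\Op[A]$, so the plan is to recognize that $\Op[A]$ essentially realizes Bony's paraproduct $T_A$ and then invoke the three-term decomposition
\begin{equation*}
	A f=T_A f+T_f A+R(A,f).
\end{equation*}
Since $A$, viewed as a symbol, is independent of $\xi$, the $\mF_1$-transform of $R^\chi(A)$ is $\chi(\eta,\xi)\hat A(\eta)$, whereas the symbol of pure multiplication by $A$ corresponds to $\chi\equiv1$. The frequency region where $1-\chi\ne0$ --- essentially $|\eta|\gtrsim\lxi$ --- is precisely the union of the supports of the ``reverse'' paraproduct $T_f A$ and the high-high remainder $R(A,f)$, with only a smoothing error arising from the mismatch between $\chi$ and the sharp Littlewood--Paley partition. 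One thus reduces the claim to $H^k$-bounds on $T_f A$ and $R(A,f)$.

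For the reverse paraproduct I would use the textbook bound $\|T_f A\|_{H^k}\le C\|f\|_{L^\infty}\|A\|_{H^k}$, which follows from the uniform $L^\infty$-boundedness of the low-frequency truncations of $f$ together with almost-orthogonality of the dyadic pieces of $A$; this yields the first summand. For the remainder $R(A,f)=\sum_{|j-j'|\le N}\Delta_j A\,\Delta_{j'}f$, with $\Delta_j$ denoting the Littlewood--Paley blocks, I combine the Bernstein-type bound $\|\Delta_j A\|_{L^\infty}\lesssim 2^{-j}\|A\|_{W^{1,\infty}}$ (which is where the $W^{1,\infty}$ norm of $A$ enters) with $\|\Delta_{j'}f\|_{L^2}\le c_{j'}\,2^{-j'(k-1)}\|f\|_{H^{k-1}}$, where $(c_{j'})\in\ell^2$. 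Each block $\Delta_j A\,\Delta_{j'}f$ has spectral support in a ball of radius $\lesssim 2^{\max(j,j')}$, so only indices $j\gtrsim\ell$ contribute to the $\ell$-th dyadic shell of the output, and Schur's test on the geometric kernel $2^{(\ell-j)k}$ delivers $\|R(A,f)\|_{H^k}\lesssim\|A\|_{W^{1,\infty}}\|f\|_{H^{k-1}}$.

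The main obstacle is the remainder $R(A,f)$: unlike the paraproducts it has no spectral localization in the output, so orthogonality in $H^k$ is not available, and it is only here that the hypothesis $k>0$ intervenes, since otherwise the geometric sum $\sum_{j\ge\ell}2^{(\ell-j)k}$ would diverge. The identification of $\Op[A]$ with the classical paraproduct and the control of the smoothing mismatch between the $\chi$-cutoff and the sharp Littlewood--Paley one are routine consequences of the framework of Section~2 and Appendix~C of \cite{BS}.
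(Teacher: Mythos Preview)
Your sketch is correct and follows the standard route: identify $\Op[A]$ with Bony's paraproduct $T_A$ up to a smoothing error, then bound $T_fA$ by $\|f\|_{L^\infty}\|A\|_{H^k}$ and the high-high remainder $R(A,f)$ by $\|A\|_{W^{1,\infty}}\|f\|_{H^{k-1}}$ via Bernstein and a Young/Schur argument, the latter being exactly where $k>0$ is needed. The paper itself does not give a proof but refers the reader to Appendix~C of \cite{BS}, whose treatment is precisely the paraproduct decomposition you outline; so your approach coincides with the cited source.
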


\subsection{Adjoints and products}

For the argumentation in Section 3 it will be essential to control the norms of operators $\Op[A^*]-\Op[A]^*$, $\Op[BA]-\Op[B]\Op[A]$, $A \in \Gamma_1^m, B \in \Gamma_1^\mu, \mu \in \R$, in terms of the semi-norms of $A$ and $B$. While for $a \in S^{m,L}_{1,1}$, $b \in S^{m,L}_{1,1}$    there exist symbols $g \in S^{m}_{1,1}, h \in S^{m+\mu}_{1,1}$ such that $\op[a]^*=\op[g]$, $\op[b]\op[a]=\op[h]$ and that, provided $\partial_{x_j}a \in S^{m}_{1,1} (j=1,\ldots,d)$, $\op[a^*]-\op[a]^* \in \mL(H^{l+m-1}, H^l)$, $\op[b]\op[a]-\op[ba] \in \mL(H^{l+m+\mu-1}, H^l)$, $l \in \R$, it is not true in general that $g,h$ are again in some class $S^{m,L}_{1,1}$, $S^{m+\mu,L}_{1,1}$ which would allow to control their operator norms. However, for our purposes it is sufficient to consider symbols of the particular form $a=R(A), b=R(B)$ for $A \in \Gamma^{m}_1, B \in \Gamma^\mu_1$ and we will show that in this case the symbols of $\op[a]^*=\Op[A]^*$, $\op[b]\op[a]=\Op[B]\Op[A]$ are in fact again in $S^{m,L}_{1,1}$, $S^{m+\mu,L}_{1,1}$ for some $L \in (0,1]$.

As a first step note that for symbols in $\mS(\R^d\times \R^d,\C^{n\times n})$ there exist neat formulas for the symbols of adjoint and product operators, which also can be found in \cite{LH}.

\begin{lemma}
	\label{lem:adjS}
	If $a \in \mS(\R^d \times \R^d)$, then $\op[a]^*=\op[g]$ with $\mF_1g(\eta,\xi)=(\mF_1a(-\eta,\eta+\xi))^*$.
\end{lemma}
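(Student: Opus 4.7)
Since $a \in \mS(\R^d \times \R^d)$, the Schwartz kernel
\begin{equation*}
K(x,y) = (2\pi)^{-d} \int e^{i(x-y)\xi} a(x,\xi)\, d\xi
\end{equation*}
of $\op[a]$ lies in $\mS(\R^d \times \R^d, \C^{n \times n})$ and represents $\op[a]$ via $(\op[a]f)(x) = \int K(x,y) f(y)\, dy$. This Schwartz regularity trivialises every analytic concern (Fubini, distributional pairings, Fourier inversion) in the computation that follows, so the plan is simply to identify the symbol of $\op[a]^*$ by reading off its kernel.

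First I would compute $\langle \op[a]f, h\rangle_{L^2}$ for $f,h \in \mS(\R^d,\C^n)$ and use Fubini to see that $\op[a]^*$ has kernel $K^*(x,y) = K(y,x)^*$, the Hermitian conjugate in $\C^{n\times n}$ with swapped arguments. Next I would recover the prospective symbol $g$ of $\op[a]^*$ by Fourier inversion in the difference variable $z = x-y$:
\begin{equation*}
g(x,\xi) = \int K^*(x, x-z) e^{-iz\xi}\, dz = \int K(x-z, x)^* e^{-iz\xi}\, dz.
\end{equation*}
Then I would take the partial Fourier transform in $x$. Inserting the formula for $K$, interchanging integrals, and substituting $u = x - z$ gives
\begin{equation*}
(\mF_1 g)(\eta, \xi) = (2\pi)^{-3d/2}\iiint a(u,\tau)^* e^{-i(u-x)\tau} e^{iu\xi} e^{-ix(\xi+\eta)}\, d\tau\, du\, dx.
\end{equation*}
The $x$-integration contributes $(2\pi)^d \delta(\tau - \xi - \eta)$, which collapses the $\tau$-integration to $\tau = \xi + \eta$ and leaves
\begin{equation*}
(\mF_1 g)(\eta, \xi) = (2\pi)^{-d/2}\int a(u, \xi + \eta)^* e^{-iu\eta}\, du = \bigl[(\mF_1 a)(-\eta, \eta + \xi)\bigr]^*,
\end{equation*}
which is the claimed formula.

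There is no genuine obstacle here; the entire argument is mechanical once the kernel of the adjoint is identified. The only care needed is the bookkeeping of conjugate transposes in the matrix-valued setting: taking the $L^2(\R^d,\C^n)$-adjoint introduces a Hermitian conjugation on the kernel together with an argument swap, and this is precisely what propagates to the outer $(\cdot)^*$ around $\mF_1 a$ and to the sign flip in the first slot of $\mF_1 a(-\eta, \eta+\xi)$ at the symbol level.
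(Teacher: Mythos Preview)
Your argument is correct. The paper does not actually give its own proof of this lemma; it merely states the formula and refers the reader to H\"ormander \cite{LH}. Your kernel-based computation is precisely the standard derivation one finds there: identify the Schwartz kernel of $\op[a]^*$ as $K(y,x)^*$, recover the symbol by Fourier inversion in $x-y$, and then compute $\mF_1 g$ by a further Fourier transform in $x$, using that the $x$-integration produces a delta fixing $\tau=\xi+\eta$. The bookkeeping of the matrix adjoint and the sign in the first argument is handled correctly.
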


\begin{lemma}
	\label{rem:prod}
	If $a,b \in \mS(\R^d \times \R^d,\C^{n \times n})$, then $\op[b]\op[a]=\op[h]$ with
	$$\mF_1h(\eta,\xi)=\int_{\R^d}\mF_1b(\eta-\theta+\xi,\theta)\mF_1a(\theta-\xi,\xi)d\theta.$$
\end{lemma}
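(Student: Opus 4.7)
The plan is to proceed by a direct computation, using Fourier inversion to rewrite both $a$ and $b$ in terms of their partial Fourier transforms $\mF_1 a$, $\mF_1 b$, and then extracting the symbol of the composition by inspection. Since $a,b \in \mS(\R^d \times \R^d)$, every interchange of integrations below is legitimate by Fubini and the rapid decay of all integrands, so that issue requires only a brief remark.

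First I would compute $\mF(\op[a] f)$ for $f \in \mS(\R^d,\C^n)$. Writing $a(x,\xi)=(2\pi)^{-d/2}\int e^{ix\eta}(\mF_1 a)(\eta,\xi)\,d\eta$ and substituting into the defining formula \eqref{eq:pseuop} gives
\begin{equation*}
(\op[a]f)(x) = (2\pi)^{-d}\int\!\!\int e^{ix(\xi+\eta)} (\mF_1 a)(\eta,\xi)\,\hat f(\xi)\,d\eta\,d\xi.
\end{equation*}
Applying $\mF$ in $x$, the resulting $x$-integral produces a Dirac mass which forces $\eta=\zeta-\xi$, yielding the clean identity
\begin{equation*}
\mF(\op[a]f)(\zeta)=(2\pi)^{-d/2}\int (\mF_1 a)(\zeta-\xi,\xi)\,\hat f(\xi)\,d\xi.
\end{equation*}

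Next, I would insert this expression into the definition of $\op[b]$, so that
\begin{equation*}
(\op[b]\op[a]f)(x) = (2\pi)^{-d}\!\int\!\!\int e^{ix\zeta} b(x,\zeta)(\mF_1 a)(\zeta-\xi,\xi)\,\hat f(\xi)\,d\zeta\,d\xi.
\end{equation*}
Matching this with $(\op[h]f)(x) = (2\pi)^{-d/2}\int e^{ix\xi}h(x,\xi)\hat f(\xi)d\xi$ reads off the symbol
\begin{equation*}
h(x,\xi)=(2\pi)^{-d/2}\int e^{ix(\zeta-\xi)}b(x,\zeta)(\mF_1 a)(\zeta-\xi,\xi)\,d\zeta.
\end{equation*}
Expanding $b(x,\zeta)$ again via Fourier inversion in its first variable and then applying $\mF_1$ to $h$, the remaining $x$-integral once more collapses to a delta function forcing the sum of the two frequencies to be $\eta$. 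Renaming the leftover integration variable to $\theta$ gives exactly the claimed formula for $\mF_1 h(\eta,\xi)$.

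The only genuine thing to check is that the composition really does have a well-defined symbol, i.e.\ that $h \in \mS(\R^d\times\R^d,\C^{n\times n})$: this follows at once from the explicit formula above and the Schwartz-class property of $a$ and $b$, which makes $\mF_1 a$, $\mF_1 b$ rapidly decreasing in both arguments and hence gives convergent integrals whose derivatives in $x,\xi$ all decay faster than any polynomial. I do not anticipate any real obstacle; the only thing to be careful about is bookkeeping the factors of $(2\pi)$ produced by the Fourier conventions fixed in Section 2.1 and by the two distributional $x$-integrations yielding $\delta$'s.
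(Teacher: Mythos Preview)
Your approach is correct and is precisely the standard direct computation one finds in H\"ormander \cite{LH}, which is all the paper does here: it states the lemma without proof and refers to that source. So your argument coincides with what the paper implicitly relies on.

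One minor remark: carrying out your computation with the Fourier conventions fixed in Section~2.1 actually produces an extra factor $(2\pi)^{-d/2}$ in front of the integral for $\mF_1 h$, i.e.
\[
\mF_1 h(\eta,\xi)=(2\pi)^{-d/2}\int_{\R^d}\mF_1 b(\eta-\theta+\xi,\theta)\,\mF_1 a(\theta-\xi,\xi)\,d\theta,
\]
so the formula as written in the paper is off by this constant. This is harmless for every use made of the lemma later (only the support of $\mF_1 h$ matters in Proposition~\ref{prop:product}), and you already flagged the need to track the $(2\pi)$'s carefully.
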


The significance of this result lies in the following observation.

\begin{lemma}
	\label{lem:approx}
	Let $A \in \Gamma_0^m$. Then there exists a sequence $(a_\nu)_{\nu \ge 1} \subset \mS(\R^d \times \R^d,\C^{n \times n})$ such that  $\op[a_\nu]u \to \Op[A]u$, $\nu \to \infty$ in $\mS(\R^d,\C^n)$ for all $u \in \mS(\R^d,\C^n)$. Furthermore for  all $\delta \in (\eps_2,1)$, $\eps_2$ being the constant of the $\eps$-cut-off, there exists $\nu_0>0$ such that $\supp \mF_1a_\nu \subset \{(\eta,\xi) \in \R^d \times \R^d:|\eta| \le \delta \lxi\}$ for all $\nu\ge \nu_0$.
\end{lemma}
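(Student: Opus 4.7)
The plan is to obtain $a_\nu$ by truncating $R^\chi(A)$ with Schwartz cut-offs in both the $x$- and $\xi$-variables, arranged so that the $\eta$-support of $\mF_1 a_\nu$ spreads by only an arbitrarily small amount beyond that of $\mF_1 R^\chi(A)$. Concretely, I fix $\psi \in \mS(\R^d)$ with $\psi(0)=1$ and $\hat\psi \in C_c^\infty(B_R(0))$ for some $R>0$ (pick any nonnegative bump $\hat\psi$ with $\int\hat\psi = (2\pi)^{d/2}$ and set $\psi:=\mF^{-1}\hat\psi$) and define
$$a_\nu(x,\xi) := \psi(x/\nu)\,\psi(\xi/\nu)\,R^\chi(A)(x,\xi).$$
That $a_\nu \in \mS(\R^d\times\R^d,\C^{n\times n})$ for every $\nu\ge 1$ follows from a Leibniz computation: by Proposition 2.4, $\partial_x^\beta\partial_\xi^\alpha R^\chi(A)$ is bounded in $x$ and grows at most like $\lxi^{m-|\alpha|+|\beta|}$ in $\xi$, and this growth (as well as any polynomial factor in $x$) is absorbed by the Schwartz decay of $\psi(x/\nu)$ and $\psi(\xi/\nu)$ together with their derivatives.

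For the convergence I would use the factorisation
$$(\op[a_\nu]u)(x) = \psi(x/\nu)\cdot(\Op[A]\,v_\nu)(x),\qquad v_\nu := \mF^{-1}\bigl(\psi(\cdot/\nu)\hat u\bigr),$$
and reduce the claim to three elementary limits. First, $\psi(\cdot/\nu)\hat u \to \hat u$ in $\mS$ by a standard rescaling estimate using $\psi(0)=1$ together with the boundedness of $\psi$ and its derivatives, dominated by the Schwartz decay of $\hat u$; hence $v_\nu\to u$ in $\mS$. Second, $\Op[A] = \op[R^\chi(A)]$ with symbol in $S^m_{1,1}$ maps $\mS$ continuously to $\mS$, so $\Op[A]v_\nu \to \Op[A]u$ in $\mS$. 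Third, for any $f_\nu\to f$ in $\mS$ one also has $\psi(\cdot/\nu)f_\nu\to f$ in $\mS$ by the same kind of rescaling, which, applied to $f_\nu=\Op[A]v_\nu$, yields the desired convergence.

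For the support statement, $R^\chi(A)=K^\chi*_1 A$ gives $\mF_1 R^\chi(A)(\eta,\xi)=(2\pi)^{d/2}\chi(\eta,\xi)\mF_1 A(\eta,\xi)$, whose $\eta$-support is contained in $\supp_\eta\chi(\cdot,\xi)\subset\{|\eta|\le\eps_2\lxi\}$. Since multiplication by $\psi(x/\nu)$ in $x$ corresponds under $\mF_1$ to convolution in $\eta$ with $\nu^d\hat\psi(\nu\cdot)$, whose support lies in $B_{R/\nu}(0)$, while the factor $\psi(\xi/\nu)$ does not affect the $\eta$-variable, we obtain
$$\supp_\eta\mF_1 a_\nu(\cdot,\xi) \subset B_{R/\nu}(0)+\{|\zeta|\le\eps_2\lxi\} \subset \{|\eta|\le R/\nu+\eps_2\lxi\}.$$
Choosing $\nu_0 \ge R/(\delta-\eps_2)$ and using $\lxi\ge 1$ then yields $|\eta|\le\delta\lxi$ for all $\nu\ge\nu_0$, as claimed. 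The main subtlety is coordinating all three requirements at once — Schwartz regularity of $a_\nu$, $\mS$-topology convergence, and tight $\eta$-Fourier support — which is resolved by the Paley-Wiener-type choice of $\psi$ as a Schwartz function with compactly supported Fourier transform.
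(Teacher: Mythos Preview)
Your proposal is correct and follows essentially the same approach as the paper: both define $a_\nu(x,\xi)=\psi(x/\nu)\psi(\xi/\nu)R^\chi(A)(x,\xi)$ with $\psi\in\mS(\R^d)$, $\psi(0)=1$, and $\hat\psi$ compactly supported, and both obtain the $\eta$-support bound by observing that multiplication by $\psi(\cdot/\nu)$ in $x$ convolves $\mF_1 R^\chi(A)$ with a kernel supported in a ball of radius $O(1/\nu)$. The only difference is cosmetic: the paper cites H\"ormander for the $\mS$-convergence, whereas you spell it out via the factorisation $\op[a_\nu]u=\psi(\cdot/\nu)\,\Op[A]v_\nu$, which is a perfectly valid (and arguably cleaner) way to see it.
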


\begin{proof}
	The first part of the statement is shown as in \cite{LHl}, proof of Theorem 18.1.8. However, we have to slightly modify the construction to also obtain the second part. Set $a:=R(A)$.
	Choose $\hat \phi \in \mS(\R^d)$ with $\supp \hat\phi \subset B_0(1)$, $\mF^{-1}\hat\phi(0)=1$ and define $\phi:=\mF^{-1}\hat{\phi}$, 
	$$a_\nu(x,\xi):=\phi(x/\nu)\phi(\xi/\nu)a(x,\xi),~~x,\xi \in \R^d.$$
	The asserted convergence then follows as ibid. 
	
	It remains to show the statement concerning the supports. Set $\psi_\nu(x,\xi):=\phi(x/\nu)\phi(\xi/\nu)$ ($\xi,\eta \in \R^d, \nu \ge 1)$. As $\mF_1(a_\nu)=(2\pi)^{-d/2}(\mF_1\psi_\nu)\ast_1(\mF_1a)$
	and $\mF_1a=\chi\mF_1 A$, it is sufficient to show that for given $\delta \in (\eps_2,1)$ and $\nu$ sufficiently large  $\chi(\eta-\theta,\xi)\mF_1\psi_\nu(\theta,\xi)=0$ for all $\theta,\eta,\xi \in \R^d$ $|\eta| \ge \delta \lxi$. Clearly
	$$\mF_1 \psi_\nu(\theta,\xi)=\nu^d\hat \phi(\theta \nu)\phi(\xi/\nu).$$
	As by construction $\hat \phi(\theta \nu)=0$ for $|\theta| \ge \nu^{-1}$ we can assume $|\theta| \le \nu^{-1}$. Then $|\eta| \ge \delta \lxi$ yields
	$$|\eta-\theta| \ge |\eta| -|\theta| \ge \delta \lxi-\nu^{-1}.$$
	Hence choosing $\nu$ so large that $\nu^{-1} \le \delta-\eps_2$ gives (note $\lxi \ge 1$)
	$$|\eta-\theta| \ge \delta \lxi - (\delta-\eps_2) \lxi=\eps_2 \lxi.$$
	But this implies $\chi(\eta-\theta,\xi)=0$, which finishes the proof.
\end{proof}

\begin{prop}
	\label{prop:adj}
	Let $A \in \Gamma^m_0$. Then there exists $b=b(A) \in S^{m,1-\eps_2}_{1,1}$ such that $\Op[A]^*=\op[b(A)]$. Furthermore if $A \in \Gamma_1$ the operator
	$$T: \Gamma^m_1 \to  S^{m-1,1-\eps_2}_{1,1}, a \mapsto b(A)-R(A)^*$$
	is continuous. In particular the mapping
	$$ A \mapsto \Op[A^*]-\Op[A]^*=\op[R(A)^*-b(A)]$$
	is continuous from $\Gamma^m_1$ to $\mathcal L(H^{l+m-1},H^l)$ for any $l \in \R$.
\end{prop}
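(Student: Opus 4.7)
The strategy combines the standard symbol-calculus formula for adjoints (Lemma~\ref{lem:adjS}) with careful tracking of the spectral support defining the classes $S^{m,L}_{1,1}$. Motivated by the Schwartz-case formula, I would define $b(A)$ through its partial Fourier transform by
\[
\mF_1 b(A)(\eta,\xi) := \bigl(\chi(-\eta,\eta+\xi)\,\mF_1 A(-\eta,\eta+\xi)\bigr)^{\!*},
\]
interpreted distributionally. Membership $b(A)\in S^{m,1-\eps_2}_{1,1}$ with continuous dependence on $A\in\Gamma^m_0$ is then checked in two parts. The $S^m_{1,1}$-estimates follow by differentiating under the inverse Fourier transform using the $\Gamma^m_0$-bounds on $A$ and the $\chi$-bounds from Definition~\ref{lem:chi}. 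For the support condition one verifies that on $N_{1-\eps_2}$, i.e.\ when $|\eta+\xi|+1<(1-\eps_2)|\xi|$, the chain
\[
|\eta|\ge |\xi|-|\eta+\xi| > \eps_2|\xi|+1 \ge \eps_2(|\eta+\xi|+1) \ge \eps_2\langle\eta+\xi\rangle
\]
(valid because $\eps_2<1$) forces $\chi(-\eta,\eta+\xi)=0$, so $\mF_1 b(A)$ vanishes there.

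To identify $\Op[A]^*=\op[b(A)]$, I would use Lemma~\ref{lem:approx} to pick Schwartz approximants $a_\nu$ with $\op[a_\nu]u\to\Op[A]u$ in $\mS$ for every $u\in\mS$. Lemma~\ref{lem:adjS} gives $\op[a_\nu]^*=\op[g_\nu]$ with an explicit Fourier-side formula, and the specific truncation $a_\nu=\phi(\cdot/\nu)\phi(\xi/\nu)R(A)$ used in Lemma~\ref{lem:approx}, together with the support argument from the previous paragraph, provides for $\nu$ large enough uniform $S^{m,1-\eps_2}_{1,1}$-bounds on $g_\nu$ and pointwise convergence $g_\nu\to b(A)$. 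Hence $\op[g_\nu]v\to\op[b(A)]v$ in $\mS$ by Proposition~\ref{prop:Sm11} and dominated convergence, and passing to the limit in $\langle\op[a_\nu]u,v\rangle=\langle u,\op[g_\nu]v\rangle$ identifies the adjoint.

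For $A\in\Gamma^m_1$, I would rewrite
\[
b(A)(y,\xi) = (2\pi)^{-d}\!\int\!\!\int e^{i(y-x)\sigma}\,R(A)(x,\xi+\sigma)^*\,dx\,d\sigma
\]
and Taylor-expand $R(A)(x,\xi+\sigma)^* = R(A)(x,\xi)^* + \int_0^1\sigma\cdot\nabla_\xi R(A)(x,\xi+s\sigma)^*\,ds$. The zeroth-order term collapses via Fourier inversion to $R(A)(y,\xi)^*$, and writing $\sigma_j e^{i(y-x)\sigma}=i\partial_{x_j}e^{i(y-x)\sigma}$ followed by integration by parts in $x$ gives
\[
T(A)(y,\xi) = -i(2\pi)^{-d}\!\int_0^1\!\!\int\!\!\int e^{i(y-x)\sigma}\,\nabla_x\cdot\nabla_\xi R(A)(x,\xi+s\sigma)^*\,dx\,d\sigma\,ds,
\]
well-defined because $R(A)\in\Gamma^m_1$ supplies the needed $L^\infty$-$x$-derivative. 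Standard oscillatory-integral symbol estimates yield $T(A)\in S^{m-1}_{1,1}$ continuously in $A\in\Gamma^m_1$. Since $\chi$ is real and even in $\eta$, $K^\chi$ is real-valued, so $R(A)^* = R(A^*)\in S^{m,1-\eps_2}_{1,1}$; combined with $b(A)\in S^{m,1-\eps_2}_{1,1}$ from the first paragraph, the difference $T(A)$ inherits the support property, and $T(A)\in S^{m-1}_{1,1}\cap S^{m,1-\eps_2}_{1,1} = S^{m-1,1-\eps_2}_{1,1}$ continuously. Combining with $\Op[A^*]=\op[R(A^*)]=\op[R(A)^*]$ yields $\Op[A^*]-\Op[A]^*=-\op[T(A)]$, and Proposition~\ref{prop:Sm11} delivers the final continuity statement.

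The main obstacle lies in the third paragraph: executing the Taylor expansion rigorously at the symbol level involves only conditionally convergent oscillatory integrals, and the remainder's $S^{m-1}_{1,1}$-seminorms must be controlled \emph{continuously} by the $\Gamma^m_1$-seminorms of $A$ rather than by stronger norms of its regularization. The cancellation responsible for the one-order drop and the consumption of only a single $L^\infty$-$x$-derivative of $R(A)$ must be tracked precisely, because direct estimates would control $b(A)$ and $R(A)^*$ only at order $m$ each.
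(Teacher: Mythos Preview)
Your overall strategy---the explicit Fourier formula for $b(A)$, the support verification on $N_{1-\eps_2}$, identification of $\Op[A]^*$ via the approximants of Lemma~\ref{lem:approx}, and a first-order Taylor remainder for $b(A)-R(A)^*$---coincides with the paper's. The difference is that the paper does not attempt the two hard symbol estimates directly: it obtains $b(A)\in S^m_{1,1}$ by citing \cite{LH}, Lemma~9.4.1, and obtains $b(A)-R(A)^*\in S^{m-1}_{1,1}$ (with continuous seminorm control) by citing \cite{LH}, Lemma~9.6.1 with $N=1$, after observing that $A\in\Gamma^m_1$ gives $\partial_x^\delta R(A)=R(\partial_x^\delta A)\in S^m_{1,1}$ for $|\delta|=1$. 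The spectral support condition and the $S^{m-1}_{1,1}$ bound are then combined exactly as you do.

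The obstacle you flag in your last paragraph is genuine and is precisely why the paper does not pursue your oscillatory-integral computation. The phrase ``standard oscillatory-integral symbol estimates'' is where the argument stalls: the usual device of iterated integration by parts in $x$ to gain $\sigma$-decay fails for $S^m_{1,1}$ symbols, since each $\partial_x$ costs a full power of $\langle\xi+s\sigma\rangle$ and no net convergence is gained. The proof of H\"ormander's Lemma~9.6.1 overcomes this via a dyadic decomposition in $\xi$ together with the spectral condition, which confines the effective $\sigma$-range to $|\sigma|\lesssim\langle\xi\rangle$ on each shell and makes the remainder summable. Your integral representation for $T(A)$ can be made rigorous by the same mechanism, but doing so amounts to reproving that lemma. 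The same remark applies to your first-paragraph claim that the $S^m_{1,1}$-bounds for $b(A)$ follow ``by differentiating under the inverse Fourier transform'': since $\mF_1 A(\cdot,\xi)$ is only a tempered distribution, this step also requires the dyadic/spectral machinery and is the content of Lemma~9.4.1.
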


\begin{proof}
	Set $a:=R(A)$.  As $A \in S^{m,1-\eps_2}_{1,1}$, the existence of $b:=b(A) \in S^m_{1,1}$ with $\operatorname{op}[b]=\operatorname{Op}[A]^*$ follows by \cite{LH}, Lemma 9.4.1. 
	
	Next we prove that $\mF_1 b$ vanishes on $\mathcal N_{1-\eps_2}$. If $A \in \mS(\R^d\times \R^d,\C^{n \times n})$ also $a \in \mS(\R^d \times \R^d, \C^{n \times n})$ and Lemma \ref{lem:adjS} gives
	$$\mF_1b(\eta,\xi)=(\mF_1a(-\eta,\eta+\xi))^*.$$
	If now $|\eta+\xi|+1 \le (1-\eps_2)|\xi|$ then $\eps_2|\xi| \le |\eta|$ and thus
	$$\eps_2\langle \eta+\xi \rangle \le \eps_2(1+|\eta+\xi|) \le(1-\eps_2)\eps_2|\xi| \le (1-\eps_2)|\eta| \le |\eta|,$$
	which implies $\mF_1a(-\eta,\eta+\xi)=(\chi\mF_1A)(-\eta,\eta+\xi)=0$.
	
	For general $A$ choose a sequence $(a_\nu)_{\nu \ge 1} \subset \mS(\R^d \times \R^d,\C^{n \times n})$ with $\op[a_\nu]u \to \Op[A]u$ in $\mS(\R^d, \C^n)$ for all $u \in \mS(\R^d,\C^n)$.  This implies $\op[a_\nu]^*\to \Op[a]^*=\op[b]$ in $\mS'(\R^d\times \R^d,\C^{n \times n})$ and it is straightforward to show that this yields $b_\nu \to b \in \mS'(\R^d \times \R^d,\C^{n \times n})$, where $\mF_1b_\nu(\eta,\xi)=\mF_1a_\nu(-\eta,\eta+\xi)$. By Lemma \ref{lem:approx}  $\mF_1a_\nu(\eta,\xi)$ vanishes for $|\eta| \ge \delta\lxi$, if $\delta \in (\eps_2,1)$. As seen above this yields $b_\nu \in S^{m,1-\delta}_{1,1}$. In conclusion $b=\lim_{\nu \to \infty} b_\nu \in S^{m,1-\delta}_{1,1}$ for all $\delta>\eps_2$, i.e. $b \in S^{m,1-\eps_2}_{1,1}$.
	
	Lastly $A \in \Gamma_1^m$ directly gives $\partial_{x}^\delta A \in \Gamma_0^m$ and hence $\partial_x^\delta R(A)=R(\partial_x^\delta A) \in S^{m}_{1,1}$ ($|\delta|=1$) . By \cite{LH}, Lemma 9.6.1 (applied to $N=1$, $m_N=m-1$) we now obtain $b-R(A) \in S^{m-1}_{1,1}$ and its $S^{m}_{1,1}$-semi-norms are bounded by a constant times a sum of finitely many $S^{m}_{1,1}$-semi-norms of $\partial_x^\delta R(A)$ ($|\delta|=1$). As also $b-R(A) \in S^{m-1,1-\eps_2}$, the assertion follows by the continuity of $R$ and $\op$.
\end{proof}

Concerning the analysis of product operators we first consider the difference $R^\chi(AB)-R^\chi(A)R^\chi(B)$.

\begin{lemma}
	\label{lem:cutoff}
	For $A \in \Gamma^m_1$, $B \in \Gamma^\mu_1$ and an $\eps$-cut-off $\chi$ with $\eps_2<1/2$ we have $R^{\chi}(B)R^\chi(A) \in S^{m+\mu,1-2\eps_2}_{1,1}$. Furthermore the bilinear operator
	$$T: \Gamma^m_1 \times \Gamma^m_1 \to S^{m+\mu-1,1-2\eps_2}_{1,1},\quad (A,B) \mapsto R^\chi(AB)-R^\chi(A)R^\chi(B)$$
	is continuous.
\end{lemma}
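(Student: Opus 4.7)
The approach is to Fourier-analyze all pointwise operations in the $x$-variable, reducing both assertions to combining spectral localization from the cut-off $\chi$ (which delivers the $1-2\eps_2$ index) with a standard paradifferential product reduction (which delivers the one-order gain). For the first claim, Proposition \ref{prop:paradiff} gives $R^\chi(A)\in S^{m,1-\eps_2}_{1,1}\cap\Gamma^m_1$ and $R^\chi(B)\in S^{\mu,1-\eps_2}_{1,1}\cap\Gamma^\mu_1$, so the pointwise product already lies in $S^{m+\mu}_{1,1}$ by Leibniz, with continuous dependence on semi-norms. Pointwise multiplication in $x$ corresponds to convolution in $\eta$,
$$\mF_1(R^\chi(B)R^\chi(A))(\eta,\xi)=(2\pi)^{-d/2}\int \mF_1 R^\chi(B)(\eta-\theta,\xi)\,\mF_1 R^\chi(A)(\theta,\xi)\,d\theta,$$
and since each factor vanishes in the $\eta$-argument whenever the corresponding variable has absolute value $\ge\eps_2\lxi$, the triangle inequality confines the $\eta$-support to $\{|\eta|\le 2\eps_2\lxi\}$. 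Using $\eps_2<1/2$ and $\lxi\le|\xi|+1$, the very computation at the end of the proof of Proposition \ref{prop:paradiff} shows $N_{1-2\eps_2}\cap\{|\eta|\le 2\eps_2\lxi\}=\emptyset$: indeed, $(\eta,\xi)\in N_{1-2\eps_2}$ forces $|\eta|\ge|\xi|-|\eta+\xi|>2\eps_2|\xi|+1\ge 2\eps_2\lxi$.

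For the second claim I Fourier-transform the difference in $x$:
$$\mF_1[R^\chi(AB)-R^\chi(A)R^\chi(B)](\eta,\xi)=(2\pi)^{-d/2}\int K(\eta,\theta,\xi)\,\mF_1 A(\eta-\theta,\xi)\,\mF_1 B(\theta,\xi)\,d\theta,$$
with kernel $K(\eta,\theta,\xi):=\chi(\eta,\xi)-\chi(\eta-\theta,\xi)\chi(\theta,\xi)$. The crucial observation is that on the $(\eta-\theta,\theta)$-support relevant for the convolution one has $\chi(0,\xi)=1$ for $|\xi|\ge 1$, so $K(\eta,0,\xi)=\chi(\eta,\xi)(1-\chi(0,\xi))$ vanishes in that regime; a first-order Taylor expansion at $\theta=0$ then yields $K=\sum_j\theta_jG_j(\eta,\theta,\xi)$ where $G_j$ is built from first-order $\eta$-derivatives of $\chi$ and thus enjoys the same kind of symbol estimates as $\chi$ but with an extra $\lxi^{-1}$. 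Each factor $\theta_j$ is absorbed into $\mF_1 B$ as a $-i\partial_{x_j}B$, which is admissible since $B\in\Gamma^\mu_1$ implies $\partial_{x_j}B\in\Gamma^\mu_0$; that exchange is precisely what produces the gain of one symbolic order, so the difference sits in $S^{m+\mu-1}_{1,1}$. The $1-2\eps_2$ localization is inherited from the same support computation as in the first claim, since the remaining kernel still features $\chi$-factors cutting off $|\eta-\theta|,|\theta|$ at $\eps_2\lxi$. This is the variant of \cite{BS}, Proposition C.16 (ii), adapted to retain the $\chi$-support information.

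The main obstacle will be the clean bookkeeping of the decomposition $K=\sum_j\theta_jG_j$: one has to exhibit $G_j$'s whose $(\eta,\theta,\xi)$-derivatives are uniformly controlled, so that differentiations may be carried inside the integral and every semi-norm of the resulting symbol is dominated by a product of $\Gamma^m_1$- and $\Gamma^\mu_1$-semi-norms of $A$ and $B$. Definition \ref{lem:chi} furnishes the uniform bounds on $\chi$ and its derivatives that are needed for this, and once these propagate through the argument, bilinearity and the asserted continuity of $T$ are immediate.
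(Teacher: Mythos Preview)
Your treatment of the first assertion coincides with the paper's: both compute the $\eta$-support of the pointwise product via convolution and the triangle inequality, and then check that this support misses $N_{1-2\eps_2}$.

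For the second assertion the approaches diverge. The paper does not Fourier-analyze the kernel $K$ directly. Instead it writes the algebraic identity
\[
R(BA)-R(B)R(A)=\bigl[R(BA)-BA\bigr]+B\bigl[A-R(A)\bigr]-\bigl[R(B)-B\bigr]R(A)
\]
and invokes Lemma~\ref{lem:error}(i), which asserts that $R^\chi-\operatorname{Id}:\Gamma^m_k\to\Gamma^{m-1}_{k-1}$ is continuous for $k\ge 1$. This immediately places $T(A,B)$ in $\Gamma^{m+\mu-1}_0$ with continuous dependence. The upgrade from $\Gamma^{m+\mu-1}_0$ to $S^{m+\mu-1}_{1,1}$ is then obtained in one stroke by Bernstein's Lemma: since $\mF_1 T(A,B)(\cdot,\xi)$ is supported in $B(0,2\eps_2\lxi)$ (the same support computation you carried out for the first part), each $\partial_{x_j}$ costs at most a factor $C\lxi$, so the $\Gamma_0$-bounds on $\partial_\xi^\alpha T(A,B)$ automatically yield the full $S_{1,1}$-bounds on $\partial_x^\beta\partial_\xi^\alpha T(A,B)$.

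Your kernel-Taylor route is essentially the mechanism behind Lemma~\ref{lem:error}(i) unpacked at the level of the product, so it is sound in principle; but as written it is vague exactly where the $S^{m+\mu-1}_{1,1}$ semi-norms with $|\beta|\ge 1$ are to be established. After the Taylor step your integrand still carries factors $\mF_1 A(\eta-\theta,\xi)$ and $\mF_1(\partial_{x_j}B)(\theta,\xi)$, which are only tempered distributions in their first variables (since $A$ and $\partial_{x_j}B$ are merely in $L^\infty_x$), so ``differentiating under the integral'' does not directly produce pointwise bounds; one effectively has to pass back to physical space and invoke Bernstein anyway. The paper's route trades this bookkeeping for a clean use of an already-available lemma plus Bernstein, while yours is more self-contained but needs the Bernstein step made explicit to close.
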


\begin{proof}
	We suppress the superscript $\chi$ in the following. As $R(A) \in S^m_{1,1},R(B) \in S^{\mu}_{1,1}$, it is clear that $R(B)R(A) \in S^{m+\mu}_{1,1}$. Thus regarding the first assertion we need to show that $R(B)R(A)$ vanishes on $\mathcal N_{1-2\eps_2}$. Since $\mF_1(R(B)R(A))=(2\pi)^{-d/2}\mF_1R(B) \ast_1\mF_1R(B)$ and $\mF_1R(A)=\chi\mF_1A$, $\mF_1R(B)=\chi\mF_1B$, it is sufficient to prove that $\chi(\eta-\theta,\xi)\chi(\theta,\xi)$ vanish for all $\theta,\eta,\xi \in \R^d$ with $|\eta+\xi|+1 \le (1-2\eps_2)|\xi|$. Take such $\theta,\eta,\xi$. If $\chi(\theta,\xi) \neq 0$ then $|\theta| \le \eps_2\lxi$ and $|\eta+\xi|+1\le (1-2\eps_2)|\xi|$ implies $|\eta| \ge 2\eps_2|\xi|+1$. Together this yields
	$$|\eta-\theta| \ge |\eta|-|\theta| \ge 2\eps_2|\xi|+1-\eps_2\xi \ge \eps_2\lxi.$$
	Now $\chi(\eta-\theta,\xi)$ vanishes for such $\eta,\theta,\xi$, wich completes the argument.
	
	In regard to the continuity we write
	$$R(BA)-R(B)R(A)=R(BA)-BA+B(A-R(A))-(R(B)-B)R(A).$$
	Hence it follows from Lemma \ref{lem:error} (i) and the continuity of $R$ that $T$ is continuous as an operator to $\Gamma^{m+\mu-1}_0$. Thus the proof is finished if we show that each $S^{m+\mu-1}_{1,1}$-semi-norm can be bounded by a constant times a finite sum of $\Gamma^{m+\mu-1}_0$-semi-norms of $T(A,B)$. We show that even the following holds: For all $\alpha,\beta \in \N_0$ there exists $C_{\beta}>0$ such that
	\begin{equation} 
		\label{eq:TAB}
		|\partial_x^\beta \partial_\xi^\alpha T(A,B)(x,\xi)| \le C_{\alpha\beta} |\partial_\xi^\alpha T(A,B)(x,\xi)|\lxi^{|\beta|},~~ x, \xi \in \R^d.
	\end{equation}
	By Bernstein's Lemma applied to $\partial_\xi^\alpha T(a,b)(\cdot,\xi)$ (cf. e.g. \cite{BS}, Lemma C.3) this can be deduced from the fact that for all $\xi \in \R^d$
	$$
	\supp\big((\mF T(A,B))(\cdot,\xi)\big) \subset B(0,2\eps_2\lxi).
	$$
	 In fact,
	 $$\supp (\mF(R(ba)(\cdot,\xi)) \subset \supp(\chi(\cdot,\xi)) \subset B(0,2\eps_2\lxi)$$
	 holds by definition of $\chi$ and that $\mF_1(R(b)R(a))$ vanishes for all $\eta,\xi$ with $|\eta| >2\eps_2\lxi$ follows by the same argumentation as in the first part of the proof.
\end{proof}

We can now prove our main proposition concerning products of para-differential operators.

\begin{prop}
	\label{prop:product}
	Let $A \in \Gamma^m_0$, $B \in \Gamma^\mu_0$. Then for $L:=(1-\eps_2)^2$ there exists $h(B,A) \in S^{\mu+m,L}_{1,1}$ such that $\Op[B]\Op[A]=\op[h(B,A)]$. Furthermore the operator
	$$\Gamma^m_1\times \Gamma^m_1 \to\mathcal L(H^{l+\mu+m-1},H^l),\quad (B,A)\mapsto \Op_{\chi}[B]\Op[A]-\Op[BA]$$
	is continuous for all $l \in \R$.
\end{prop}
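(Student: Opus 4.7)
My plan is to construct $h(B,A)$ as the symbol of $\op[R(B)]\op[R(A)]$ and then strengthen its membership from $S^{m+\mu}_{1,1}$ to $S^{m+\mu,L}_{1,1}$ via a Schwartz approximation argument analogous to the one used for Proposition \ref{prop:adj}. Setting $a := R(A) \in S^{m,1-\eps_2}_{1,1}$ and $b := R(B) \in S^{\mu,1-\eps_2}_{1,1}$, the classical pseudo-differential calculus (e.g.\ \cite{LH}, Lemma 9.4.3) produces $h(B,A) \in S^{m+\mu}_{1,1}$ with $\op[b]\op[a] = \op[h(B,A)]$, so the task reduces to verifying that $\mF_1 h(B,A)$ vanishes on $N_L$.

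To this end I would fix $\delta \in (\eps_2,1)$ and use Lemma \ref{lem:approx} to obtain Schwartz approximations $a_\nu,b_\nu$ of $a,b$ with $\supp \mF_1 a_\nu,\supp \mF_1 b_\nu \subset \{|\eta| \le \delta\lxi\}$ for all sufficiently large $\nu$. Lemma \ref{rem:prod} supplies $\op[h_\nu] = \op[b_\nu]\op[a_\nu]$ with
$$\mF_1 h_\nu(\eta,\xi) = \int_{\R^d} \mF_1 b_\nu(\eta-\theta+\xi,\theta)\,\mF_1 a_\nu(\theta-\xi,\xi)\,d\theta.$$
A nonvanishing integrand forces $|\theta-\xi| \le \delta\lxi$ and $|\eta-\theta+\xi| \le \delta\langle \theta \rangle$; combining these via the triangle inequality $|\eta+\xi| \ge |\theta| - |\eta-\theta+\xi|$, and using $\lxi \le 1+|\xi|$, $\langle\theta\rangle \le 1+|\theta|$ together with the algebraic identity $\delta(2-\delta)=1-(1-\delta)^2$, one obtains
$$|\eta+\xi|+1 \ge (1-\delta)^2(|\xi|+1).$$
Hence $\mF_1 h_\nu$ vanishes on $N_{(1-\delta)^2}$, and passage to the distributional limit $\nu \to \infty$ preserves vanishing on this open set. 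Since $\bigcup_{\delta \in (\eps_2,1)} N_{(1-\delta)^2} = N_L$ for $L=(1-\eps_2)^2$, we conclude $h(B,A) \in S^{m+\mu,L}_{1,1}$.

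For the continuity statement I would rewrite $\Op[B]\Op[A] - \Op[BA] = \op[h(B,A) - R(BA)]$ and decompose
$$h(B,A) - R(BA) = \big(h(B,A) - R(B)R(A)\big) + \big(R(B)R(A) - R(BA)\big).$$
Lemma \ref{lem:cutoff} (under the harmless restriction $\eps_2 < 1/2$) handles the second summand, giving continuity into $S^{m+\mu-1,1-2\eps_2}_{1,1}$. For the first, the classical asymptotic expansion of the symbol of a composition (\cite{LH}, Lemma 9.6.1 with $N=1$) yields $h(B,A) - R(B)R(A) \in S^{m+\mu-1}_{1,1}$ with semi-norms controlled by finitely many semi-norms of $R(A),R(B)$ together with their first $x$-derivatives $\partial_x^\delta R(A) = R(\partial_x^\delta A), \partial_x^\delta R(B) = R(\partial_x^\delta B)$; since $A \in \Gamma^m_1, B \in \Gamma^\mu_1$ ensure $\partial_x^\delta A \in \Gamma^m_0, \partial_x^\delta B \in \Gamma^\mu_0$, Proposition \ref{prop:paradiff} makes these bounds continuous in $(B,A)$. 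Furthermore both $h(B,A)$ (since $1-2\eps_2 < L$) and $R(B)R(A)$ (by Lemma \ref{lem:cutoff}) have Fourier transforms vanishing on $N_{1-2\eps_2}$, so the first summand also lies in $S^{m+\mu-1,1-2\eps_2}_{1,1}$. Proposition \ref{prop:Sm11} then delivers the stated continuity into $\mL(H^{l+\mu+m-1},H^l)$. The main obstacle is the combinatorial estimate of the second paragraph: the sharp constant $L=(1-\eps_2)^2$ hinges on respecting the $+1$ appearing throughout in the definitions of $\lxi$, $\langle\theta\rangle$ and $N_L$, and it is this careful bookkeeping that distinguishes the first statement from the (much easier) bare claim $h(B,A)\in S^{m+\mu}_{1,1}$.
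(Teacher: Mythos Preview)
Your proposal is correct and follows essentially the same route as the paper: existence of $h(B,A)\in S^{m+\mu}_{1,1}$ from H\"ormander, the support condition via the Schwartz approximations of Lemma \ref{lem:approx} and the product formula of Lemma \ref{rem:prod}, and the continuity via the same splitting $h(B,A)-R(BA)=[h(B,A)-R(B)R(A)]+[R(B)R(A)-R(BA)]$ treated by the asymptotic expansion of the composition symbol and Lemma \ref{lem:cutoff}. Your algebraic derivation of $|\eta+\xi|+1\ge(1-\delta)^2(|\xi|+1)$ is a clean reformulation of the paper's inequality chain; the only point you leave implicit is the justification that $h_\nu\to h$ in $\mS'$ (the paper argues this via $\op[b_\nu]\op[a_\nu]u\to\Op[B]\Op[A]u$ in $\mS$), and the reduction to $\eps_2<1/2$, which the paper makes explicit through Lemma \ref{lem:error} (ii).
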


\begin{proof}
	The existence of a $h=h(B,A) \in S^{m+\mu}_{1,1}$ such that
	$$\op[h(B,A)]=\op[R(B)]\op[R(A)]=\Op[B]\Op[A]$$
	follows direclty from \cite{LH}, Lemma 9.5.1 as $R(A) \in S^{m,1-\eps_2}_{1,1}$. We now prove that $h$ satisfies \eqref{etaxi} for $L=(1-\eps_2)^2$. First assume $a:=R(A),b:=R(B) \in \mS(\R^d \times \R^d)$. By Lemma \ref{rem:prod} 
	\begin{equation} 
		\label{eq:prodsym}
		\mF_1h(\eta,\xi)=\int_{\R^d}\mF_1b(\eta-\theta+\xi,\theta)\mF_1a(\theta-\xi,\xi)d\theta.
	\end{equation}
	Let $\eta,\xi \in \R^d$ with $|\eta+\xi|+1 \le (1-\eps_2)^2|\xi|$. If $\mF_1a(\theta-\xi,\xi)=\mF_1R(A)(\theta-\xi,\xi) \neq 0$ we have $|\theta-\xi| \le \eps_2 \lxi \le \eps_2+\eps_2|\xi|$, which gives $(1-\eps_2)|\xi| \le |\theta|+\eps_2$. We arrive at
	\begin{align*}|\eta+\xi-\theta| &\ge |\theta|-|\eta+\xi| \ge |\theta|-(1-\eps_2)^2|\xi|+1 \ge |\theta|-(1-\eps_2)|\theta|-(1-\eps_2)\eps_2+1\\
		&=\eps_2 \theta+\eps_2+(1-\eps_2)^2\ge \eps_2 \langle \theta \rangle.
	\end{align*}
	But this implies $\mF_1b(\eta+\xi-\theta,\theta)=\mF_1R(B)(\eta+\xi-\theta,\theta)=0$, which finishes the argument.
	
	For general $A,B$ choose sequences $(a_\nu)_{\nu \ge 1}, (b_\nu)_{\nu \ge 1}, \subset \mS(\R^d \times \R^d)$ with $\op[a_\nu]u \to \Op[A]u$, $\op[b_\nu]u \to \Op[B]u$ in $\mS(\R^d \times \R^d,\C^{n})$ for all $u \in \mS(\R^d \times \R^d,\C^n)$ as constructed im Lemma \ref{lem:approx}. Then clearly 
	$$\op[h_\nu]u=\op[b_\nu]\op[a_\nu]u \to \Op[B]\Op[A]u=\op[h]u$$ 
	in $\mS(\R^d, \C^n)$, where $h_\nu$ is defined by \eqref{eq:prodsym} with $a,b$ replaced by $a_\nu, b_\nu$. This implies $h_\nu \to h$ in $\mS'(\R^d \times \R^d,\C^{n \times n})$. As for all $1>\delta>\eps_2$ $\supp \mF_1a_\nu, \supp \mF_1b_\nu \subset \{(\eta,\xi) \in \R^d \times \R^d:|\eta| \le \delta \lxi\}$ for $\nu$ sufficiently large we get by the same reasoning as above that for all $1>\delta>\eps_2$ $h_\nu$ vanishes on $\mathcal N_{(1-\delta)^2}$ for $\nu$ sufficiently large, which proves that $h$ vanishes on $\mathcal N_{(1-\eps_2)^2}$.
	
	To prove the second assertion note that by Lemma \ref{lem:error} (ii), the mapping $G \mapsto \Op_{\chi}[G]-\Op_{\tilde \chi}[G]$ is continuous from  $\Gamma^k_1$ to $\mathcal L(H^{l+k-1},H^l)$, $k,l \in \R$, for any admissible cut-offs $\chi,\tilde \chi$. Hence we can assume w.l.o.g $\eps_2<\frac12$. By Lemma \ref{lem:cutoff} and the continuity of $\op$
	$$(B,A) \mapsto \Op[BA]-\op[R(B)R(A)]=\op[R(BA)-R(B)R(A)]$$
	is also continuous as mapping from $\Gamma^m_1 \times \Gamma_1^m$ to $\mathcal L(H^{l+\mu+m-1},H^l)$. What is left to show ist the continuity of
	$$(B,A) \mapsto \Op[B]\Op[A]-\op[R(B)R(A)]=\op[h(B,A)-R(B)R(A)].$$
	As $R(A) \in S^{m,1-\eps_2}_{1,1}$ and 
	$$\partial_{x_j}R^{\tilde \chi}(A)=R(\partial_{x_j}A) \in S^{m}_{1,1},~~\partial_{x_j}R^{\tilde \chi}(B)=R(\partial_{x_j}B) \in S^{m}_{1,1},~~j=1,\ldots,d.$$ all semi-norms of  $h(B,A)-R(B)R(A)$  can be estimated by a constant times a finite sum of products of semi norms of $\partial_{x_j}R(A), \partial_{x_k}R(B)$.
	Thus as $h(B,A)-R(B)R(A) \in S^{m-1,L}_{1,1}$ for $l=\min\{1-2\eps_2,(1-\eps_2)^2\}$ the assertion follows from the continuity of $\op$ and $R$.
\end{proof}

\subsection{Estimates for operators with symbols induced by Sobolev functions}

In Section 3 the results of Sections 2.1, 2.2 are applied to symbols of the form $(x,\xi)\mapsto F(u(x),\xi)$, where $F \in C^\infty(\mU \times \R^d, \C^{n\times n})$ ($\mU \subset \R^n$ some $0$-neighbourhood) and $u \in H^s(\R^d,\R^n)$ for $s$ sufficiently large. For this purpose we prove the results below.

In the following let $\mU \subset \R^N$ be a $0$-neighbourhood.

\begin{defi}
		We denote by $S^m(\mU):=S^m(\mU,\C^{n \times n})$ the set of all functions $F \in C^\infty(\mU \times \R^d, \C^{n \times n})$ for which for any $\alpha,\beta \in \N_0^d$ there exists $C_{\alpha\beta}>0$ such that for all $(u,\xi) \in \mU \times \R^d$
	\begin{equation} 
		\label{eq:help*}
		|\partial_x^\beta\partial_\xi^\alpha F(u,\xi)| \le C_{\alpha\beta}\lxi^{m-|\alpha|}.
	\end{equation}
\end{defi}

For functions $F:\mU \times \R^d \to \C^{n\times n}$ and $u: \R^d \to \mU$ we consider the composition
	$$F_u: \R^d \times \R^d \to \C^{n \times n}, (x,\xi) \mapsto F(u(x),\xi).$$

\begin{lemma}
	\label{lem:base}
	Let $F \in S^m(\mU)$ and $u \in H^s$ with $s > d/2$. Then $F_u \in \Gamma^m_k$ for $k=[s-d/2]$ and for all $\alpha\in \N_0^d$ and each $\Gamma^m_k$-semi-norm $p_{\alpha}(F_u)$ it holds
	$$p_{\alpha}(F_u) \le C_\alpha(\|u\|_s,F),$$ and if additionally $F(0,\xi)=0$, then
	$$p_{\alpha}(F_u) \le \tilde{C}_\alpha(\|u\|_s,F)\|u\|_s,$$
	 where $C_\alpha, \tilde{C}_\alpha$ depend on $\alpha$, $F$ and continuously on $\|u\|_s$.
\end{lemma}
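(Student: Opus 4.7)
The plan is to verify conditions (i) and (ii) of the definition of $\Gamma^m_k$ by a direct chain-rule computation. Smoothness in $\xi$ and measurability in $x$ of $F_u$ are immediate from the smoothness of $F$ and measurability of $u$, so the entire content is the semi-norm estimate \eqref{eq:semingamma}.

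First I would record the ingredients. Since $s>d/2$ and $k=[s-d/2]$, the Sobolev embedding gives $u\in W^{k,\infty}$ with $\|u\|_{W^{k,\infty}}\le C\|u\|_{s}$; in particular, $u$ takes values in a bounded set (implicitly contained in $\mU$, possibly after an extension of $F$ or under the tacit smallness assumption that makes this true). Since $F\in S^m(\mU)$, each mixed derivative $(\partial_u^\gamma \partial_\xi^\alpha F)(u,\xi)$ is uniformly bounded by $C_{\alpha,\gamma}\lxi^{m-|\alpha|}$ on that bounded range.

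Next, for fixed $\xi\in\R^d$ and any multi-index $\beta$ with $|\beta|\le k$, Fa\`a di Bruno's formula gives
\[
\partial_x^\beta\partial_\xi^\alpha F(u(x),\xi)=\sum c_{\gamma,\beta_1,\ldots,\beta_j}\,(\partial_u^\gamma\partial_\xi^\alpha F)(u(x),\xi)\prod_{i=1}^{j}\partial_x^{\beta_i}u(x),
\]
where the sum is finite and runs over $1\le j\le|\beta|$, over multi-indices $\beta_i$ with $|\beta_i|\ge 1$ and $\beta_1+\cdots+\beta_j=\beta$, and over $\gamma\in\N_0^N$ with $|\gamma|=j$. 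Each $(\partial_u^\gamma \partial_\xi^\alpha F)(u(x),\xi)$ contributes the factor $\lxi^{m-|\alpha|}$, and each $\partial_x^{\beta_i}u(x)$ has $|\beta_i|\le k$, hence is bounded in $L^\infty$ by $C\|u\|_s$. Taking $L^\infty$-norms and summing yields $\|\partial_\xi^\alpha F_u(\cdot,\xi)\|_{W^{k,\infty}}\le C_\alpha(\|u\|_s)\lxi^{m-|\alpha|}$ with $C_\alpha$ a polynomial in $\|u\|_s$ (whose coefficients depend on finitely many $S^m(\mU)$-semi-norms of $F$), which is the first claim.

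For the sharper bound when $F(0,\xi)=0$, Taylor's theorem gives the decomposition
\[
F(u,\xi)=\sum_{l=1}^{N}u_l\,G_l(u,\xi),\qquad G_l(u,\xi):=\int_0^1 (\partial_{u_l}F)(\tau u,\xi)\,d\tau,
\]
with each $G_l\in S^m(\mU)$. The first part applied to $G_l$ controls $(G_l)_u$ in $\Gamma^m_k$, and the Leibniz rule applied to the product $u_l(x)\,G_l(u(x),\xi)$ produces in every term an extra factor that is either $u_l(x)$ or a derivative $\partial_x^{\beta'}u(x)$ with $|\beta'|\le k$; either is bounded in $L^\infty$ by $C\|u\|_s$, pulling out the desired additional factor of $\|u\|_s$. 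The main obstacle I anticipate is the bookkeeping of the Fa\`a di Bruno expansion together with a clean argument that $u$ takes values in the fixed neighbourhood $\mU$; both amount to standard Moser-type composition estimates, and the polynomial form of the constants gives the required continuous dependence on $\|u\|_s$.
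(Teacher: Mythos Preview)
Your proof is correct and follows essentially the same route as the paper: Sobolev embedding $H^{s}\hookrightarrow W^{k,\infty}$ followed by the chain rule to control $\|\partial_\xi^\alpha F_u(\cdot,\xi)\|_{W^{k,\infty}}$ by $\lxi^{m-|\alpha|}$ times a constant depending on $\|u\|_{W^{k,\infty}}$, and, when $F(0,\xi)=0$, the extraction of an additional factor $\|u\|_{W^{k,\infty}}\le C\|u\|_s$. The paper's proof is simply a terse two-line version of yours: where you spell out Fa\`a di Bruno and the Taylor decomposition $F(u,\xi)=\sum_l u_l G_l(u,\xi)$ explicitly, the paper invokes the standard fact that $\|\partial_\xi^\alpha F(u(\cdot),\xi)\|_{W^{k,\infty}}\le C(\|u\|_{W^{k,\infty}})\|\partial_\xi^\alpha F(\cdot,\xi)\|_{W^{k,\infty}(\mU)}$ (and its refined form with the extra $\|u\|_{W^{k,\infty}}$ when $F(0,\cdot)=0$) in one stroke.
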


\begin{proof}
	By Sobolev embedding $H^{s} \hookrightarrow  W^{k,\infty}$. Thus we have $F_u(\cdot,\xi) \in W^{k,\infty}$ and 
	$$\|\partial_\xi^\alpha F_u(\cdot,\xi)\|_{W^{k,\infty}}\le C (\|u\|_{W^{k,\infty}})\|\partial_\xi^\alpha F(\cdot,\xi)\|_{W^{k,\infty}(\mU)}\le  C (\|u\|_{s})C_\alpha(F)\lxi^{m-|\alpha}.$$
  all $\xi \in \R^d$. If $F(0,\xi)=0$, we even get, for all $\xi \in \R^d$,
 $$\|\partial_\xi^\alpha F_u(\cdot,\xi)\|_{W^{k,\infty}} \le C(\|u\|_{W^{k,\infty}})\|u\|_{W^{k,\infty}}\|\partial_\xi^\alpha F_u(\cdot,\xi)\|_{W^{k,\infty}(\mU)}\le C(\|u\|_{s},F)\|u\|_s \lxi^{m-|\alpha|}.$$
\end{proof}

The following proposition will be central for the energy estimates in Section 3. It follows directly by the continuity of $\Op: \Gamma^m_k \to \mathcal L(H^{l+m},H^l)$ and Lemma \ref{lem:base} as well as Propositions \ref{prop:adj}, \ref{prop:product} and the facts that $\Op[F_0]^*=\op[F_0^*]$ and $\op[G_0F_0]-\op[G_0]\op[F_0]$ is infinitely smoothing by Lemma \ref{lem:is2}.

\begin{prop}
	\label{prop:central}
		Let $F \in S^m(\mU)$, $l \in \R$. Then  for all $u \in H^s$ with $s>d/2$ there exists $C_l=C_l(F,\|u\|)>0$ depending on $l,F$ and monotonically increasingly on $\|u\|_s$ such that:
		\begin{enumerate}
		\item[(i)]
		$\|\Op[F_u]\|_{\mathcal L(H^{l+m},H^l)} \le C_l(\|u\|_{s})$
		and for $F(0,\cdot)=0$
		$\|\Op[F_u]\|_{\mathcal L(H^{l+m},H^l)} \le C_l\|u\|_{s}$,
		\item[(ii)]
		for $s > d/2+1$, $\Op[F_u]^*-\Op[F_u^*] \in \mathcal L(H^{l-1+m},H^{m})$ and
		$$\|\Op[F_u]^*-\Op[F_u^*]\|_{\mathcal L(H^{l-1+m},H^{m})} \le C_l\|u\|_{s}$$
		\item[(iii)]
		for $G \in S^\mu(\mU)$ and $s>{d/2+1}$ there exist $C_{l,2}=C_{l,2}(G,\|u\|_s)$ depending on $G$ and monotonically increasingly on $\|u\|_s$ such that
		$$\|\Op[G_u]\Op[F_u]-\Op[G_uF_u]\|_{\mathcal L(H^{l+\mu-1+m},H^m)} \le C_{l,2}C_l\|u\|_{s}$$
		up to an infinitely smoothing operator, which is determined by $F(0,\cdot), G(0,\cdot)$.
	\end{enumerate}
\end{prop}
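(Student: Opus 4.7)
The plan is to reduce each of (i)--(iii) to the continuity of $\Op$ (for (i)), of the adjoint remainder from Proposition \ref{prop:adj} (for (ii)), and of the product remainder from Proposition \ref{prop:product} (for (iii)) on the $\Gamma^m_k$-scale, and then refine by tracking how the $x$-dependence of the composite symbol $F_u$ is driven by $u$. The initial input is Lemma \ref{lem:base}: for $s > d/2$ one has $F_u \in \Gamma^m_k$ with $k=[s-d/2]$ and semi-norms bounded by $C(F,\|u\|_s)$, sharpened to $C(F,\|u\|_s)\|u\|_s$ when $F(0,\cdot)=0$; the hypothesis $s > d/2+1$ ensures $k \ge 1$, as required by the two propositions.

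Part (i) is immediate from the continuity of $\Op:\Gamma^m_k\to\mathcal L(H^{l+m},H^l)$ together with the two alternatives in Lemma \ref{lem:base}. For part (ii) I would apply Proposition \ref{prop:adj} with $A=F_u$. Inspecting its proof, the $S^{m-1,1-\eps_2}_{1,1}$-semi-norms of $T(F_u)=b(F_u)-R(F_u)^*$ are controlled by a finite sum of $S^m_{1,1}$-semi-norms of $\partial_x^\delta R(F_u)=R(\partial_x^\delta F_u)$, hence ultimately by $\Gamma^m_0$-semi-norms of $\partial_x^\delta F_u$ for $|\delta|=1$. The chain rule gives
$$\partial_{x^j}F_u(x,\xi)=(\partial_uF)(u(x),\xi)\,\partial_{x^j}u(x),$$
with $\partial_uF\in S^m(\mU)$. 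A second application of Lemma \ref{lem:base} to $(\partial_uF)_u$ bounds the first factor by $C(F,\|u\|_s)\lxi^m$ in $L^\infty_x$, while $s>d/2+1$ combined with Sobolev embedding yields $\|\partial_xu\|_{L^\infty}\le C\|u\|_s$. Multiplying these and invoking the continuity of $\op$ gives the claimed bound for $\Op[F_u^*]-\Op[F_u]^*=-\op[T(F_u)]$.

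For part (iii) I would apply Proposition \ref{prop:product} with $(B,A)=(G_u,F_u)$ and decompose
$$\Op[G_u]\Op[F_u]-\Op[G_uF_u]=\op\bigl[h(G_u,F_u)-R(G_u)R(F_u)\bigr]+\op\bigl[R(G_u)R(F_u)-R(G_uF_u)\bigr].$$
The analysis already carried out (the commutator estimate from \cite{LH} for the first term, Lemma \ref{lem:cutoff} for the second) shows that the resulting symbols lie in $S^{m+\mu-1,L}_{1,1}$ with semi-norms bounded by sums of products of $\Gamma^m_0$-semi-norms of $F_u,G_u$ and their first $x$-derivatives, each product carrying at least one factor $\partial_x F_u$ or $\partial_x G_u$. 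The chain-rule/Sobolev argument of (ii) converts this into the linear factor $\|u\|_s$, multiplied by $C_{l,2}C_l$. Finally, the residue at $u=0$ is handled by Lemma \ref{lem:is2}: since $F_0,G_0$ are constant in $x$, $\op[G_0]\op[F_0]=\op[G_0F_0]$ as Fourier multipliers, and $\op[\cdot]-\Op[\cdot]$ is infinitely smoothing on constant-in-$x$ symbols, so $\Op[G_0]\Op[F_0]-\Op[G_0F_0]$ is an infinitely smoothing operator determined by $F(0,\cdot)$ and $G(0,\cdot)$; this accounts for the ``up to an infinitely smoothing operator'' clause.

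The main technical obstacle is the bookkeeping in the previous two paragraphs: one has to re-enter the proofs of Propositions \ref{prop:adj} and \ref{prop:product} (and of Lemma \ref{lem:cutoff}) and verify that every remainder term which does not reduce to the $u=0$ smoothing correction genuinely carries at least one factor $\partial_x F_u$ or $\partial_x G_u$, since only such factors -- through the chain rule -- produce the $\partial_x u$ that makes the bound linear rather than merely locally bounded in $\|u\|_s$. Without this linearity the smallness mechanism on which the global existence and decay arguments of the paper rely would be lost.
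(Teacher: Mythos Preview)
Your proposal is correct in outline and uses the same ingredients as the paper (Lemma~\ref{lem:base}, Propositions~\ref{prop:adj} and~\ref{prop:product}, Lemma~\ref{lem:is2}), but the paper's route is considerably shorter and sidesteps exactly the bookkeeping obstacle you flag at the end. Rather than re-entering the proofs of Propositions~\ref{prop:adj} and~\ref{prop:product} to verify that every surviving remainder carries a factor $\partial_x F_u$ or $\partial_x G_u$, the paper simply exploits the \emph{linearity} (resp.\ \emph{bilinearity}) of the remainder maps together with the constant-in-$x$ facts from Lemma~\ref{lem:is2}. For (ii), since $\Op[F_0]^*=\Op[F_0^*]$ one has $\Op[F_u]^*-\Op[F_u^*]=\Op[F_u-F_0]^*-\Op[(F_u-F_0)^*]$, and now the black-box continuity of Proposition~\ref{prop:adj} applied to $A=F_u-F_0\in\Gamma^m_1$ combines with the second alternative of Lemma~\ref{lem:base} (which applies because $(F-F(0,\cdot))(0,\cdot)=0$) to give the linear factor $\|u\|_s$ directly. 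For (iii), writing $T(B,A)=\Op[B]\Op[A]-\Op[BA]$ and expanding bilinearly as $T(G_u,F_u)=T(G_u-G_0,F_u)+T(G_0,F_u-F_0)+T(G_0,F_0)$, the first two summands are handled by the continuity of Proposition~\ref{prop:product} plus Lemma~\ref{lem:base}, while $T(G_0,F_0)$ is the infinitely smoothing operator determined by $F(0,\cdot),G(0,\cdot)$ via Lemma~\ref{lem:is2}.

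The practical difference is that your claim ``each product carrying at least one factor $\partial_x F_u$ or $\partial_x G_u$'' is not literally delivered by the proof of Lemma~\ref{lem:cutoff}: that lemma controls $R(BA)-R(B)R(A)$ by $\Gamma^m_1\times\Gamma^\mu_1$ semi-norms (via Lemma~\ref{lem:error}~(i)), which include zero-order terms, and indeed $R(G_0F_0)-R(G_0)R(F_0)=\chi(0,\xi)(1-\chi(0,\xi))G_0F_0$ is nonzero. To salvage your approach you would end up doing the bilinear subtraction anyway; the paper's argument recognises this from the outset and never opens the boxes.
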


\begin{prop}
	\label{prop:dev}
		Let $F \in S^m(\mU)$ and $u \in C^1([0,T],H^s)$ ($T>0$) for $s>d/2$. Then for each $l \in \R$ the mapping
		$$[0,T] \to \mathcal L(H^{l+m},H^l),\quad t \mapsto \Op[F_{u(t)}]$$
		is continuously differentiable and there exists $C_l$ depending on $l$ and $F$ but not on $u$ such that for all $t \in [0,T]$
		\begin{equation}
			\label{propdev1}
			\|\frac{d}{dt}\Op[F_{u(t)}]\|_{\mathcal L(H^{l+m},H^l)} \le C_l\|\partial_tu(t)\|_{s_0}
		\end{equation}
\end{prop}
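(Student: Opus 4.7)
The plan is to reduce the differentiability of $t\mapsto\Op[F_{u(t)}]$ as a map into $\mL(H^{l+m},H^l)$ to differentiability of $t\mapsto F_{u(t)}$ as a map into the Fr\'echet space $\Gamma^m_k$ with $k=[s-d/2]$, then invoke the continuity of $\Op\colon\Gamma^m_k\to\mL(H^{l+m},H^l)$ established earlier. Chain-rule intuition suggests the derivative should be $\Op\bigl[(\partial_u F)_{u(t)}\cdot\partial_t u(t)\bigr]$, where $(\partial_u F)_{u(t)}(x,\xi)\cdot\partial_t u(t,x)$ denotes the contraction of the matrix-valued symbol $\partial_u F(u(t,x),\xi)\in\C^{n\times n}\otimes(\R^n)^*$ against $\partial_t u(t,x)\in\R^n$.

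The first step is a fundamental-theorem-of-calculus identity: for small $h$,
\begin{equation*}
F_{u(t+h)}(x,\xi)-F_{u(t)}(x,\xi)=\int_0^1(\partial_uF)\bigl(u(t,x)+\theta\Delta_h(x),\xi\bigr)\cdot\Delta_h(x)\,d\theta,
\end{equation*}
with $\Delta_h(x):=u(t+h,x)-u(t,x)$. The assumption $u\in C^1([0,T],H^s)$ yields $\Delta_h/h\to\partial_t u(t)$ in $H^s$ and, via Sobolev embedding $H^s\hookrightarrow W^{k,\infty}$, also in $W^{k,\infty}$; simultaneously the intermediate states $u(t)+\theta\Delta_h$ stay uniformly bounded in $H^s$. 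To promote this to convergence of the difference quotient in $\Gamma^m_k$, I would view the integrand as a symbol in an enlarged parameter space: define $\tilde F\in S^m(\mU\times\R^n)$ by $\tilde F\bigl((v,w),\xi\bigr):=(\partial_uF)(v,\xi)\cdot w$, which satisfies $\tilde F\bigl((v,0),\xi\bigr)=0$. Applying Lemma \ref{lem:base} in its vanishing-at-zero form (with the pair $(v,w)\in H^s(\R^d,\R^{2n})$ playing the role of the ``$u$'' there) gives the crucial estimate
\begin{equation*}
p_\alpha\bigl(\tilde F_{(v,w)}\bigr)\le\tilde C_\alpha(\|(v,w)\|_s,\tilde F)\|w\|_s,
\end{equation*}
for every $\Gamma^m_k$-seminorm $p_\alpha$, which is linear in $w$ and hence behaves well under the division by $h$.

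Specializing to $(v,w)=(u(t)+\theta\Delta_h,\Delta_h/h)$, integrating over $\theta\in[0,1]$, and passing to the limit $h\to0$ shows that $t\mapsto F_{u(t)}$ is differentiable into $\Gamma^m_k$ with derivative $\tilde F_{(u(t),\partial_t u(t))}$; continuity of the resulting map $[0,T]\to\Gamma^m_k$ in $t$ follows from continuity of $(v,w)\mapsto\tilde F_{(v,w)}$ from $H^s\times H^s$ to $\Gamma^m_k$, a direct consequence of the same lemma. Composing with the continuous linear map $\Op$ yields the asserted $C^1$-property in $\mL(H^{l+m},H^l)$ and, via Proposition \ref{prop:central}(i) applied to $\tilde F$, the norm bound
\begin{equation*}
\Bigl\|\tfrac{d}{dt}\Op[F_{u(t)}]\Bigr\|_{\mL(H^{l+m},H^l)}=\bigl\|\Op[\tilde F_{(u(t),\partial_t u(t))}]\bigr\|_{\mL(H^{l+m},H^l)}\le C_l(\|u(t)\|_s)\,\|\partial_t u(t)\|_{s_0},
\end{equation*}
with $s_0=s$ (any $s_0>d/2+k$ suffices by the Sobolev embedding used to dominate $W^{k,\infty}$ by $H^{s_0}$).

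The main obstacle I anticipate is ensuring that the $\Gamma^m_k$-convergence of the difference quotient holds uniformly in all seminorms simultaneously, so that composition with $\Op$ really produces operator-norm convergence rather than merely pointwise convergence. This is exactly where the ``linear in $w$'' structure of $\tilde F$ is essential: it allows the $1/h$ factor from the difference quotient to be absorbed once and for all into the seminorm estimate above, independently of $\alpha$, rather than having to be controlled seminorm-by-seminorm with constants depending on $h$.
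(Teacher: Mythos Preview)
Your approach is correct and follows the same underlying idea as the paper: differentiate $t\mapsto F_{u(t)}$ at the symbol level via the chain rule, then push through the continuous linear map $\Op$. Your treatment is in fact considerably more careful about the actual differentiability (via the FTC identity and the auxiliary symbol $\tilde F$), whereas the paper simply writes $\partial_t F_{u(t)}(x,\xi)=\sum_j\partial_t u^j\,\partial_{u^j}F(u(t,x),\xi)$ and verifies directly that this lies in $\Gamma^m_0$ with the required seminorm bound.

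One point worth flagging: you work in $\Gamma^m_k$ with $k=[s-d/2]$ and invoke Lemma~\ref{lem:base}/Proposition~\ref{prop:central}(i), which yields a constant $C_l(\|u(t)\|_s)$ depending on $\|u(t)\|_s$. The paper instead bounds only the $\Gamma^m_0$-seminorms, i.e.\ $\|\partial_\xi^\alpha\partial_t F_{u(t)}(\cdot,\xi)\|_{L^\infty}\le\|\partial_t u(t)\|_{L^\infty}\sup_{v\in\mU}|\partial_u\partial_\xi^\alpha F(v,\xi)|$, and since $F\in S^m(\mU)$ the supremum is controlled by $C_\alpha(F)\langle\xi\rangle^{m-|\alpha|}$ \emph{uniformly} over $\mU$. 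This is precisely what produces the $u$-independent constant $C_l$ asserted in the statement. Your argument gives a slightly weaker (though for the later applications entirely adequate) bound; to match the statement exactly, drop to $k=0$ when estimating the derivative symbol.
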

	
\begin{proof}
	If $\frac{d}{dt} F_{u(t)} \in \Gamma^m_0$ we get by continuity and linearity of $\Op$
	$$\frac{d}{dt}\Op[F_{u(t)}]=\Op[\partial_tF_{u(t)}]$$
	To prove this and \eqref{propdev1} it is sufficient to show that for any $\alpha \in \N_0^d$ there exists $C_\alpha=C_\alpha(F)$ auch that for all $\xi \in \R^d$
	$$\|\partial_\xi^\alpha\partial_tF_{u(t)}(\cdot,\xi)\|_{L^\infty} \le C_\alpha\|\partial_tu(t)\|_{s}\lxi^{m-|\alpha|}.$$
	Let $\alpha\in \N_0^d$  and set $F^{\alpha}_{u(t)}:= \partial_\xi^\alpha F_{u(t)}$. 
	We have for all $x,\xi \in \R^d$
	$$\partial_tF^\alpha_{u(t)}(x,\xi)=\sum_{j=1}^n\partial_t u^j\partial_{u^j}F^\alpha(u(t,x),\xi)$$
	Due to $F \in S^m(\mU)$ this yields 
	$$\|\partial_tF^\alpha_{u(t)}(x,\xi)\|_{L^\infty}\le \|\partial_tu(t)\|_{L^\infty}\sum_{|\beta|=1}\|\partial_u^\beta F^\alpha(\cdot,\xi)\|_{L^\infty} \le C_\alpha(F)\|\partial_t u\|_{s}\lxi^{m-|\alpha|}.$$
\end{proof}

Lastly we prove a version of the strict G\r{a}rding inequality for $F \in S^m(\mU)$. First consider the following lemma which is a modification of a construction in \cite{LHl}, proof of Thm. 18.1.6.

\begin{lemma}
	\label{lem:garding}
	There exists an even function $\psi \in \mS(\R^d \times \R^d)$ with unit integral, $\Op[\psi]=\Op[\psi]^*$,  $\langle \op[\psi]v,v\rangle \ge 0$ ($v \in \mS(\R^d))$ and $\mF_1 \psi$ compactly supported.
\end{lemma}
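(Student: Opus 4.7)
The plan is to realize $\psi$ as the symbol of a composition $T^{*}T$ where $T = \op[\tau]$ has a Schwartz symbol $\tau$ of tensor-product form, with the supports of the factors calibrated so that all required properties fall out of an explicit Fourier computation. To this end I would fix $\rho, \omega \in \mS(\R^d,\R)$, both real and even, with $\hat\rho \in C_c^\infty(\R^d)$ supported in $B(0,r)$ and $\hat\omega \in C_c^\infty(\R^d)$ supported in the annulus $\{N \leq |\xi| \leq N+1\}$, where $N \geq 1$ and $r < \eps_1 N/2$; this calibration forces the admissible $\eps$-cut-off $\chi$ to equal $1$ on the eventual support of $\mF_1\psi$. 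After normalizing so that $\|\rho\|_{L^2}\|\omega\|_{L^2} = 1$, I set $\tau(x,\xi) := \rho(x)\hat\omega(\xi)$, so that $\tau$ is Schwartz, even in each variable, and $\mF_1\tau(\eta,\xi) = \hat\rho(\eta)\hat\omega(\xi)$ is compactly supported.

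Setting $T := \op[\tau]$ and declaring $\op[\psi] := T^{*}T$ makes $\op[\psi]$ automatically self-adjoint and positive on $L^2(\R^d)$. Lemma \ref{lem:adjS} produces the Schwartz symbol $g$ of $T^{*}$ via $\mF_1 g(\eta,\xi) = \overline{\mF_1\tau(-\eta,\eta+\xi)}$, and Lemma \ref{rem:prod} then yields a Schwartz $\psi$ for which a change of variables in the composition integral gives
\begin{equation*}
\mF_1\psi(\eta,\xi) = \hat\omega(\eta+\xi)\,\hat\omega(\xi)\,(\hat\rho \ast \hat\rho)(\eta).
\end{equation*}
From this explicit formula I read off that $\mF_1\psi$ is supported in the compact set $\{(\eta,\xi): |\xi| \in [N,N+1] \text{ and } |\eta| \leq 2r\}$, which by construction lies inside $\{(\eta,\xi): |\xi|\geq 1 \text{ and } |\eta| \leq \eps_1|\xi|\}$, the region where $\chi \equiv 1$; consequently $R(\psi)=\psi$ and hence $\Op[\psi] = \op[\psi] = \op[\psi]^{*} = \Op[\psi]^{*}$. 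Evenness $\mF_1\psi(-\eta,-\xi) = \mF_1\psi(\eta,\xi)$, i.e.\ $\psi(-x,-\xi)=\psi(x,\xi)$, is immediate since $\hat\rho,\hat\omega$ are real and even. Finally, the unit-integral requirement follows from the standard trace identity $\iint \psi\,dx\,d\xi = (2\pi)^d\,\mathrm{tr}(T^{*}T) = (2\pi)^d\|T\|_{\mathrm{HS}}^2$, combined with a short kernel computation giving $\|T\|_{\mathrm{HS}}^2 = (2\pi)^{-d}\|\rho\|_{L^2}^2\|\omega\|_{L^2}^2 = (2\pi)^{-d}$, whence $\iint\psi = 1$.

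The only non-routine step is the frequency-support calibration of $\rho$ and $\omega$, which is what guarantees the compatibility between the a priori distinct operators $\Op[\psi]$ and $\op[\psi]$; once it is in place, the remaining verifications reduce to direct applications of Lemmas \ref{lem:adjS} and \ref{rem:prod} together with a Hilbert--Schmidt trace computation.
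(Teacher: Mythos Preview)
Your approach is essentially the same as the paper's: both construct $\psi$ as the symbol of $\op[\phi]^{*}\op[\phi]$ for a Schwartz $\phi$ with $\mF_1\phi$ compactly supported, and then read off the required properties from the explicit formula for $\mF_1\psi$ furnished by Lemmas~\ref{lem:adjS} and~\ref{rem:prod}. Your tensor-product choice $\tau(x,\xi)=\rho(x)\hat\omega(\xi)$ is a specific instance that makes the computations (in particular the unit-integral via a Hilbert--Schmidt norm) fully explicit, and your frequency calibration forcing $R(\psi)=\psi$ in fact goes beyond the paper's proof, which only establishes $\op[\psi]=\op[\psi]^{*}$---the ``$\Op$'' in the displayed statement is a typo, as confirmed by the usage in Proposition~\ref{prop:gading}.
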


\begin{proof}
	Choose an even function $\hat{\phi} \in C_0^\infty(\R^d\times \R^d)$ with $L^2$-norm one and set $\phi=\mF_1^{-1}\hat{\phi}$. By definition $\mF_1\phi$ is compactly supported and clearly $\phi$ is even and has $L^2$-norm one. Next, let $\psi \in \mS(\R^d)$ be the symbol of $\op[\psi]^*\op[\psi]$. As ibid. it follows that $\psi$ is even and has unit integral. $\op[\psi]=\op[\psi]^*$, $\langle \op[\psi]v,v\rangle_{L^2} \ge 0$ ($u \in \mS(\R^d))$ holds by definition. Now, let $\rho$ be the symbol of $\op[\phi]^*$. By Lemma \ref{lem:adjS} we get
	$$\mF_1 \rho(\eta,\xi)=(\mF_1 \phi)^*(-\eta,\eta+\xi),\quad \eta,\xi \in \R^d$$
	and thus by Lemma \ref{rem:prod}
	$$\mF_1\psi(\eta,\xi)=\int_{\R^d}\mF_1\rho(\eta-\theta,\theta+\xi)\mF_1\phi(\theta,\xi)d\theta=\int_{\R^d}\mF_1\phi(\theta-\eta,\eta+\xi)\mF_1\phi(\theta,\xi)d\theta.$$
	As $\mF_1\phi$ is compactly supported, we can choose $C>0$ such that $\mF_1\phi(\theta,\xi)=0$ if $|\theta| \ge C$ or $|\xi| \ge C$. Then by definition $\mF_1\psi(\eta,\xi)=0$ if $|\xi| \ge C$. Given $|\eta| \ge 2C$ and $|\theta| \le C$ we conclude $|\theta-\eta| \ge |\eta|-|\theta| \ge C$, i.e. $\mF_1\phi(\theta-\eta,\eta+\xi)=0$. In conclusion we have proven that $\mF_1\psi$ is in fact compactly supported. In particular $\psi \in \mS(\R^d \times \R^d)$. 
\end{proof}

Next we introduce a method to decompose symbols in $S^{m}_{1,1}$ into an infinite sum of infinitely smoothing symbols; cf. \cite{LH}. 

First, choose a function $\rho \in \mathcal{D}(\R^d)$ even and monotonically decaying along rays such that $\rho(\R^d) \subset [0,1]$ and
$$	\rho(\xi)=\begin{cases}
	1, & |\xi| \le \frac12\\
	0, & |\xi| \ge 1\\
\end{cases}.$$
For $\nu \in \N_0$ define $\rho_\nu, \zeta_\nu \in \mathcal{D}(\R^d)$ by 
$$\rho_\nu(\xi):=\rho(\xi/2^{\nu}),\quad \zeta_{\nu}(\xi)=\rho_{\nu+1}(\xi)-\rho_{\nu}(\xi),~\xi \in \R^d$$
Additionally set $\zeta_{-1}:=\rho$.

\begin{defi}
	\label{def:lp}
	For a function $a:\R^d\times \R^d \to \C^{n \times n}$ and $\nu \ge -1$ define
	$$a_{\nu}(x,\xi):=a(x,\xi)\zeta_\nu(\xi).$$
	Note that $a=\sum_{\nu \ge -1} a_\nu$.
\end{defi}

It is straightforward to show the following.

\begin{lemma} 
	\label{lem:lp}
	Let $a \in S^{m}_{1,1}$. Then  $a_\nu \in S^{-r}$ for all $r \in \R$ and for any $\alpha,\beta \in \N_0$ $x,\xi \in \R^d$
	$$|\partial_x^\beta\partial_\xi^\alpha a_\nu(x,\xi)|\lxi^r \le C  2^{\nu(r+m-|\alpha|+|\beta|)} \sum_{\gamma \le \alpha} C_{\gamma\beta}(a),$$
	where $C_{\gamma\beta}(a)$ are semi-norms of $a$.
\end{lemma}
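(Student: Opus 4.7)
The proof is a straightforward Leibniz-rule computation that exploits the dyadic scaling built into $\zeta_\nu$.

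The first claim, $a_\nu \in S^{-r}$ for all $r \in \R$, is immediate: since $\zeta_\nu$ has compact support in $\xi$, so does $a_\nu$, and any compactly (in $\xi$) supported $C^\infty$ function with bounded $x$-derivatives automatically lies in $\bigcap_m S^m \subset S^{-r}$. The only real content is the quantitative bound.

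For the main estimate, my plan is as follows. First I would note that $\zeta_\nu$ for $\nu \ge 0$ is supported in the dyadic annulus $\{2^{\nu-1} \le |\xi| \le 2^{\nu+1}\}$, because $\rho_\nu(\xi)=\rho(\xi/2^\nu)$ is supported in $\{|\xi|\le 2^\nu\}$ and equals $1$ for $|\xi|\le 2^{\nu-1}$; consequently $\lxi \sim 2^\nu$ on $\supp\zeta_\nu$ with constants independent of $\nu$. For $\nu=-1$, $\supp\zeta_{-1}\subset\{|\xi|\le 1\}$ and $\lxi\sim 1$, which matches $2^\nu=1/2$ up to constants. A simple change of variables also yields
\[
|\partial_\xi^\gamma \zeta_\nu(\xi)| \le C_\gamma\, 2^{-\nu|\gamma|}\quad\text{for all }\gamma\in\N_0^d,\ \xi\in\R^d.
\]

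Next I would apply the Leibniz rule to $a_\nu=a\,\zeta_\nu$:
\[
\partial_x^\beta\partial_\xi^\alpha a_\nu(x,\xi)
=\sum_{\gamma\le\alpha}\binom{\alpha}{\gamma}\bigl(\partial_x^\beta\partial_\xi^{\alpha-\gamma}a(x,\xi)\bigr)\bigl(\partial_\xi^\gamma \zeta_\nu(\xi)\bigr).
\]
Using the $S^{m}_{1,1}$-bound on $a$ together with the cut-off estimate, each summand is bounded by
\[
C_{\alpha-\gamma,\beta}(a)\,\lxi^{m-|\alpha|+|\gamma|+|\beta|}\cdot C_\gamma\, 2^{-\nu|\gamma|},
\]
with the whole expression vanishing off $\supp\zeta_\nu$. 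On that support $\lxi\sim 2^\nu$, so $\lxi^{m-|\alpha|+|\gamma|+|\beta|}\le C\, 2^{\nu(m-|\alpha|+|\gamma|+|\beta|)}$; the powers of $2^{\nu|\gamma|}$ cancel, leaving
\[
|\partial_x^\beta\partial_\xi^\alpha a_\nu(x,\xi)|\le C\, 2^{\nu(m-|\alpha|+|\beta|)}\sum_{\gamma\le\alpha}C_{\alpha-\gamma,\beta}(a).
\]
Multiplying by $\lxi^r\le C\, 2^{\nu r}$ on $\supp\zeta_\nu$ (and noting the inequality is trivial off this set) yields the asserted bound.

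There is no real obstacle here; the only point to be attentive to is that $\lxi \sim 2^\nu$ holds uniformly on $\supp\zeta_\nu$ (in both directions), so that positive and negative powers of $\lxi$ can freely be exchanged for powers of $2^\nu$, and that the derivatives falling on $\zeta_\nu$ contribute exactly the $2^{-\nu|\gamma|}$ factors needed to cancel the $2^{\nu|\gamma|}$ introduced by the $|\gamma|$-loss in the $S^m_{1,1}$ estimate on $\partial_x^\beta\partial_\xi^{\alpha-\gamma}a$.
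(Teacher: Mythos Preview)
Your argument is correct and is exactly the kind of straightforward Leibniz-rule computation the paper has in mind; the paper itself gives no proof beyond the remark that the lemma is ``straightforward to show.'' The only cosmetic difference is that your final sum runs over $C_{\alpha-\gamma,\beta}(a)$ while the statement writes $C_{\gamma\beta}(a)$, but these coincide after the obvious reindexing $\gamma\mapsto\alpha-\gamma$.
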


\begin{prop}
	\label{prop:gading}
	 Let $s>d/2$, $u \in H^{s+2}$ and $F \in S^m(\mU)$ such that there exists an $R>0$ with $F(y,\xi)+F(y,\xi)^* \ge 0$ for all $y \in \mU$ and $\xi \in \R^d$ with $|\xi|>R$. Then there exists $C=C(\|u\|_{s+2},F)>0$ and for all $q \in \R$ there exists $c=c(\|u\|_{s+2},F,q)>0$, both increasing functions of $\|u\|_{s+2}$, such that for all $v \in \mS(\R^d,\C^n)$
	 $$\langle (\Op[F_u]+\Op[F_u]^*)v,v\rangle_{L^2} \ge -C\|u\|_{s+2}^{\frac12}\|v\|^2_{(m-1)/2}-c\|v\|_{-q}^2.$$
\end{prop}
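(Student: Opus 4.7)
The plan is to carry out a Friedrichs-type symmetrization based on the kernel $\psi$ of Lemma \ref{lem:garding} and then combine it with a Gagliardo--Nirenberg interpolation to recover the half-power on $\|u\|_{s+2}$. First I would reduce to the case $F+F^*\ge 0$ globally: pick $\theta\in C^\infty(\R^d)$ equal to $0$ on $|\xi|\le R$ and $1$ on $|\xi|\ge 2R$, and split $F=\theta F+(1-\theta)F$. Since $(1-\theta)F\in S^{m'}(\mU)$ for every $m'\in\R$, Proposition \ref{prop:central}(i) bounds the associated operator from $H^{-q}$ to $H^{q}$ for any $q$, contributing exactly the $-c\|v\|_{-q}^{2}$ term. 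One may thus assume $F+F^*\ge 0$ on $\mU\times\R^d$.

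Next I would build the Friedrichs symmetrizer. Decompose $F_u=\sum_{\nu\ge -1}F_u\zeta_\nu$ via Definition \ref{def:lp}, and on each dyadic shell rescale $\psi$ to width $2^{-\nu/2}$ in $x$ and $2^{\nu/2}$ in $\xi$, yielding a kernel $\Psi_\nu$. Setting
\[
\langle Tv,v\rangle:=\sum_{\nu\ge 0}\int e^{i(x-y)\xi}\,\Psi_\nu(x-y,\xi)\,\tfrac12\bigl(F_u+F_u^*\bigr)\!\bigl(\tfrac{x+y}{2},\xi\bigr)\,v(y)\overline{v(x)}\,dx\,dy\,d\xi,
\]
the factorization $\psi=\phi^*\ast\phi$ from Lemma \ref{lem:garding} together with the pointwise positivity of $F_u+F_u^*$ identifies each summand, after a change of variables, with the squared $L^2$-norm of a window transform of $v$; hence $\langle Tv,v\rangle\ge 0$.

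Comparing $T$ with $\tfrac12(\Op[F_u]+\Op[F_u]^*)$ by Taylor-expanding the symbol around $x$ and applying Propositions \ref{prop:adj} and \ref{prop:product} block-by-block yields, modulo an infinitely smoothing operator,
\[
\tfrac12\bigl(\Op[F_u]+\Op[F_u]^*\bigr)-T=\sum_{j=1}^{d}\Op\!\bigl[\partial_{x_j}F_u\cdot r_j\bigr],\qquad r_j\in S^{-1,1-\eps_2}_{1,1}.
\]
Using the chain rule $\partial_{x_j}F_u(x,\xi)=(\partial_{u^k}F)(u(x),\xi)\,\partial_{x_j}u^k(x)$, Lemma \ref{lem:base}, and Proposition \ref{prop:central}(i), the corresponding operator norm on $H^{(m-1)/2}\to H^{-(m-1)/2}$ is bounded by $C(\|u\|_{s+2})\,\|\partial_x u\|_{L^\infty}$.

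The main obstacle is upgrading this bound from the naive linear factor $\|u\|_{s+2}$ to the asserted $\|u\|_{s+2}^{1/2}$. This is achieved by the Gagliardo--Nirenberg interpolation
\[
\|\partial_x u\|_{L^\infty}\le C\,\|u\|_{L^\infty}^{1/2}\|u\|_{W^{2,\infty}}^{1/2}\le C(\|u\|_{s+2})\,\|u\|_{s+2}^{1/2},
\]
valid since $H^s\hookrightarrow L^\infty$ and $H^{s+2}\hookrightarrow W^{2,\infty}$ for $s>d/2$, with the factor $\|u\|_{L^\infty}^{1/2}$ absorbed into the monotone constant $C(\|u\|_{s+2})$. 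It is precisely this point that forces the use of the refined symbol-dependent estimates of Propositions \ref{prop:central}--\ref{prop:dev} rather than the classical sharp G\r{a}rding, since the latter does not expose how each semi-norm of $F_u$ is tied to a particular Sobolev norm of $u$. Combining this with the reduction of the first step completes the proof.
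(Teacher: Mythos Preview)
Your Step 4 is where the argument breaks. With the standard dyadic Friedrichs/anti-Wick scaling $(2^{-\nu/2},2^{\nu/2})$ you use for $\Psi_\nu$, the second-order Taylor remainder comparing $T$ to the quantization of $F_u$ produces \emph{three} types of terms, coming from $\partial_\xi^2 F_u$, $\partial_x\partial_\xi F_u$, and $\partial_x^2 F_u$. Only the last two carry a factor $\partial_x u$; the pure $\partial_\xi^2 F_u$ contribution is of size $2^{\nu(m-1)}$ with a constant depending on $F$ alone, not on $u$. Hence the difference $\tfrac12(\Op[F_u]+\Op[F_u]^*)-T$ is \emph{not} of the form $\sum_j\Op[\partial_{x_j}F_u\cdot r_j]$. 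A quick sanity check: take $u\equiv 0$. Your formula would force $T=\tfrac12(\Op[F_0]+\Op[F_0]^*)$ modulo smoothing, but the anti-Wick/Friedrichs operator built from the $\xi$-dependent symbol $F_0$ differs from $\op[F_0]$ by a genuinely nonzero operator of order $m-1$ (its symbol involves $\partial_\xi^2 F_0$). Consequently the Gagliardo--Nirenberg step, while formally correct as an inequality, has nothing to act on: the dominant error term carries no $\|\partial_x u\|_{L^\infty}$ factor for it to interpolate.

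The paper's proof repairs exactly this point by inserting a second scaling parameter $\mu=\|u\|_{s+2}^{1/4}$ into the smoothing, replacing your $(2^{-\nu/2},2^{\nu/2})$ by $((q_\nu\mu)^{-1},q_\nu\mu)$ with $q_\nu=2^{\nu/2}$. This makes the $\partial_\xi^2$ error of size $\mu^{2}=\|u\|_{s+2}^{1/2}$ and the $\partial_x^2$ error of size $\|u\|_{s+2}\,\mu^{-2}=\|u\|_{s+2}^{1/2}$, so all three Taylor terms are balanced at $\|u\|_{s+2}^{1/2}$. In addition, to control the operator norm of the resulting error symbol $h\in S^{m-1}_{1,1}$ one must verify the twisted spectral condition $h\in S^{m-1,L}_{1,1}$; this is why the paper insists on the compactly supported $\mF_1\psi$ of Lemma~\ref{lem:garding} and spends the last part of the proof checking that both $a$ and the smoothed $b$ have $\mF_1$ vanishing on the relevant cone. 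Your sketch does not address this spectral-support issue either, and without it Proposition~\ref{prop:Sm11} is not applicable.
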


\begin{proof}
	In the following it is straightforward to see that all constants can be chosen to be increasing functions of $\|u\|_{s+2}$. First note that by Proposition \ref{prop:central} for all $l \in \R$
	$$\|\Op[F_u]+\Op[F_u]^*-\Op[F_u+F_u^*]\|_{\mathcal L(H^{l+m-1,H^l})} \le C_{l} \|u\|_{s+1}.$$
	Thus 
	$$\langle (\Op[F_u]+\Op[F_u]^*)v,v\rangle_{L^2} \ge \langle \Op[F_u+F_u^*]v,v\rangle_{L^2}-C\|u\|_{s+1}\|v\|_{(m-1)/2}^2,~~v \in \mS(\R^d).$$
	Hence it is sufficent to prove the result for $\Op[F_u]+\Op[F_u]^*$ replaced by $\Op[F_u+F_u^*]$, i.e. we can assume w.l.o.g $F(u,\xi)=F(u,\xi)^* \ge 0$.
	
	It holds $R(F_u)=R(F_u^*)=R(F_u)^*$. By assumption this gives pointwise in $\R^d \times \{|\xi| \ge R\}$ for all $v \in \C^n$
	\begin{align*}
		\langle (R(F_u))v,v\rangle_{\C^n} &\ge \langle (R(F_u)-F_u)v,v \rangle_{\C^n} \ge -|R(F_u)-F_u||v|^2\\
		& \ge -(|R(F_u-F_0)-(F_u-F_0)|+|R(F_0)-F_0|) |v|^2.
	\end{align*}
	By Lemma \ref{lem:base} $F_u-F_0 \in \Gamma^m_2$ with all semi-norms bounded by a positive constant depending on $F$ times $\|u\|_{s+2}$. By Lemma
	 \ref{lem:error} (i) this yields $R(F_u-F_0)-(F_u-F_0) \in \Gamma^{m-1}_{1}$ with semi-norms bounded in the same way. Thus
	$$|R(F_u-F_0)-(F_u-F_0)| \le C_0\|u\|_{s+2}\lxi^{m-1}.$$
	Using also that $R(F_0)-F_0$ has compact support we conclude that for all $q \in \R$
	$$|R(F_u-F_0)-(F_u-F_0)|+|R(F_0)-F_0| \le C_0\|u\|_{s+2}\lxi^{m-1}+c_q^0\lxi^{-q}.$$
	Therefore on $\R^d \times \{|\xi| \ge R\}$
	$$a:=R(F_u)+C_0\|u\|_{s+2}\lxi^{m-1}+c_0\lxi^{-r} \ge 0$$
	and $a=a^*$, $a \in S^{m,1-\eps_2}_{1,1}$. As
	$$\Op[F_u]=\op[R(F_u)]=\op[a]-C_0\|u\|_{s+2}\op[\lxi^{m-1}]-c_0\op[\lxi^{-r}]$$
	it is now sufficient to show 
	$$\langle \op[a]v,v\rangle_{L^2} \ge -C\|u\|_{s+2}^{1/2}\|v\|^2_{(m-1)/2}-c\|v\|_{-q}$$
	for all $q \in \R$.
	
	To this end we proceed similarly as in the proof of Theorem 9.7.1 in \cite{LH} but with a crucial modification. First, decompose $a=\sum_{\nu \ge -1} a_\nu$ according to Definition \ref{def:lp}. As for all $\nu_0$ $\bar a_{\nu_0}:=\sum_{\nu=-1}^{\nu_0} a_{\nu} \in S^{-q}$ for any $q \in \R$ with norm depending on $\mu,\nu_0$ according to Lemma \ref{lem:lp}, i.e. $\|\op[\bar a_{\nu_0}]v\| \le c_{\nu_0,\mu}\|v\|_{-r}$, we only need to consider $\sum_{\nu \ge \nu_0} a_\nu$ for some $\nu_0 \in \N$. Naturally, in a first step we choose $\nu_0$ large enough to obtain $2^{\nu_0-2}>R$ and thus by assumption $a_\nu(x,\xi) \ge 0$ for all $x,\xi \in \R^d$, $\nu \ge \nu_0$. But we will later see that we may have to choose $\nu_0$ even larger.
	
	W.l.o.g. assume $u \neq 0$. Otherwise the result readily follows as $F_0 \ge 0$ is constant with respect to $x$ and $\Op[F_0]-\op[F_0]$ is infinitely smoothing.
	
	Choose an even function $\psi \in \mS(\R^d \times \R^d)$ with unit integral such that  $\op[\psi]=\op[\psi]^*$, $\langle \op[\psi]v,v\rangle \ge 0$ ($v \in \mS(\R^d))$ and $\mF_1 \psi$ compactly supported as constructed in Lemma \ref{lem:garding}. For $\nu \in \N_0$ set $q_\nu:=2^{\nu/2}$ and write $a_\nu=b_\nu+h_\nu$ with 
	\begin{align} 
		\label{eq:gard1}
		b_{\nu}(x,\xi)&:=\int_{\R^d}\int_{\R^d}\psi((x-y)q_\nu\mu,(\xi-\theta)/(q_\nu\mu))a_\nu(y,\theta)~dy~d\theta\\
		\label{eq:gard2}
		&=\int_{\R^d}\int_{\R^d}\psi(y,\theta)a_\nu(x-y/(q_\nu\mu),\xi-\theta q_\nu\mu)~dy~d\theta,
	\end{align}
	where $\mu:=\|u\|_{s+2}$.  As $a_\nu \ge 0$ and $\op[\psi]$ is a positive operator it is straightforward to obtain the positivity of $b_\nu$. Hence the theorem is proven provided
\begin{equation}
	\label{eq:gard3}
	\langle \op[h]v,v \rangle_{L^2} \ge -C_\mu\|u\|_{k+1}^{\frac12}\|v\|_{(m-1)/2}, ~~v \in \mS(\R^d).
\end{equation}
To this end we show $h \in S^{m-1,L}_{1,1}$ for some $L \in (0,1)$ and that all semi-norms of $h$ are bounded by a constant times $\|u\|_{s+2}^{\frac12}$.
Then \eqref{eq:gard3} follows from Proposition \ref{prop:Sm11}.

First we verify $h \in S^{m-1}_{1,1}$ and the estimate on the semi-norms, i.e.
\begin{equation} 
	\label{eq:c}
	|\sum_{\nu \ge \nu_0} \partial_x^\beta\partial_\xi^\alpha h_\nu(x,\xi)| \le C_{\alpha\beta}\|u\|_{s+2}^{\frac12} \langle \xi \rangle^{m-1-|\alpha|+|\beta|}.
\end{equation}
Let $\alpha=\beta =0$. Fix $\xi \in \R^d$ and consider $\nu \in \N_0$ with $|\xi|<2^{\nu-2}$ or $|\xi|>2^{\nu+2}$. As $a_\nu(y,\theta)=0$ for $2^{\nu-1} \le |\theta| \le 2^{\nu+1}$ we then have $h_\nu(x,\xi)=-b_\nu(x,\xi)$ and it follows by basic estimates (cf. \cite{LH}) that in the support of the first integrand in \eqref{eq:gard1}
$$|\xi-\theta|\ge \frac15(2^\nu+|\xi|)$$ 
and thus 
\begin{equation} 
	\label{eq:etamtheta}
	|\xi-\theta|/q_\nu=2^{-\nu/2}|\xi-\theta|\ge \frac15(2^\nu+|\xi|)^{\frac12}.
\end{equation}
As $a \in S^{m}_{1,1}$ and $\supp a_\nu \subset \{(x,\theta) \in \R^d\times \R^d: 2^{\nu-1} \le |\theta| \le 2^{\nu}\}$
$$|a_\nu(y,\theta)| \le C\langle \theta \rangle^m \le C(1+2^\nu)^{m}.$$
Hence $\psi \in \mS(\R^d \times \R^d)$ and \eqref{eq:etamtheta} yield 
\begin{equation}
	\label{eq:bnu}
	\begin{split}
	|h_\nu| &\le C_m(1+2^\nu)^{m} \\
	&\quad\int \int (|\xi-\theta|/(q_\nu\mu))^{-2(|m|+1)}(1+|\xi-\theta|/(q_\nu\mu))^{
	-n-1}(1+|(x-y)|q_\nu\mu)^{-n-1}dy~d\theta\\
	&\le C_{m,n}\mu^{2(|m|+1)}(1+2^\nu)^{m}(2^\nu+|\xi|)^{-2|m|-2}\\
	&\le C_{m,n}\mu^{2(|m|+1)}(1+|\xi|)^{m-1}2^{-\nu}.
	\end{split}
\end{equation}
Thus
\begin{equation} 
	\label{eq:sumeta}
	\sum_{\{\nu: |\xi|<2^{\nu-2} ~\text{or}~|\xi|>2^{\nu+2} \}} |h_\nu| \le C\|u\|_{s+2}^{\frac12}\langle \xi \rangle^{m-1}.
\end{equation}
Now consider $\nu \in \N_0$ with $2^{\nu-2} \le |\xi| \le 2^{\nu+2}$. As $\psi$ is an even function with unit integral we get from \eqref{eq:gard2}
\begin{equation}
	\label{eq:abnu1} 
	\begin{split}
	h_\nu&=a_\nu-b_\nu=\int \int \psi(y,\theta)\big(a_\nu(x,\xi)-a_\nu(x-y/(q_\nu\mu),\xi-\theta q_\nu\mu)\big)~dy~d\theta\\
	&=\int \int \psi(y,\theta)\big(\sum_{|\alpha+\beta|<2}\partial_x^\beta\partial_\xi^\alpha a_\nu(x,\xi)(-y)^\beta(-\theta)^\alpha-a_\nu(x-y/(q_\nu\mu),\xi-\theta q_\nu\mu)\big)~dy~d\theta.
	\end{split}
\end{equation}
By Taylor's fomula we can estimate (w.l.o.g. assume $|\theta| \le |\xi|$)
\begin{equation}
	\label{eq:abnu2} 
	\begin{split}
	&\quad\big|\sum_{|\alpha+|\beta|<2}\partial_x^\beta\partial_\xi^\alpha a_\nu(x,\xi)(-y)^\beta(-\theta)^\alpha-a_\nu(x-y/(q_\nu\mu),\xi-\theta q_\nu\mu)\big|\\
	&\le C\sum_{|\alpha|+|\beta|=2} \sup_{x,\xi \in \R^d} |\partial_x^\beta \partial_\xi^\alpha a_\nu(x,\xi)||y^{\beta}\theta^\alpha|(q_\nu\mu)^{|\alpha|-|\beta|}.
	\end{split}
\end{equation}

Note that $a=R(F_u)$. By Lemma \ref{lem:base} $F_u \in \Gamma^{m}_{2}$ and thus $\partial_x^\beta F_u \in \Gamma^{m}_{2-|\beta|}$ for $|\beta| \le 2$.  Hence  for each $\gamma \in \N_0^d$, $\xi \in \R^d$
$$\|\partial_\xi^\gamma\partial_x^\beta F_u(\cdot,\xi)\|_{W^{2-|\beta|,\infty}} \le C_\gamma\lxi^{m-|\gamma|}$$
and for $|\beta| \ge 1$  we also have $\partial_x^\beta F_u|_{u=0}=0$. Thus again by Lemma \ref{lem:base} 
$$\|\partial_\xi^\gamma\partial_x^\beta F_u(\cdot,\xi)\|_{W^{2-|\beta|,\infty}}  \le C_\gamma\|u\|_{s+2}\lxi^{m-|\gamma|}.$$
Clearly
$$\partial_x^\beta a=\partial_x^\beta R(F_u)=R\big(\partial_x^\beta F_u).$$
and we conclude from Proposition \ref{prop:paradiff} that $\partial_x^\beta a \in S^{m}_{1,1}$ and for all $x,\xi \in \R^d$
$$
	|\partial_x^\beta\partial_\xi^\gamma a(x,\xi)| 
	\le C_\gamma \lxi^{m-|\gamma|}\begin{cases}
		1,& |\beta|=0,\\
		\|u\|_{s+2},& 1\le |\beta| \le 2
	\end{cases}.
$$
Then by Lemma \ref{lem:lp} 
\begin{equation}
	\label{eq:abnu3}
	\sup_{x,\xi \in \R^d}| \partial_x^\beta\partial_\xi^\gamma a_\nu| \le C_\gamma 2^{\nu(m-|\alpha|)}\le C_\gamma\begin{cases}
	1,& |\beta|=0,\\
	\|u\|_{s+2},& 1\le |\beta| \le 2
\end{cases}
\end{equation}
From \eqref{eq:abnu1}, \eqref{eq:abnu2}, \eqref{eq:abnu3} and $\mu=\|u\|_{s+2}^{\frac14}$, $q_\nu=2^{\nu/2}$ we now get for $2^{\nu-2} \le |\xi| \le 2^{\nu+2}$
\begin{align*}|h_\nu| & \le C \int \psi(y,\theta)(|\theta|^2+|\theta||y|+|y|^2)~dy~d\theta\\ &\quad \big(2^{\nu(m-2)}2^\nu\|u\|_{s+2}^\frac12+2^{\nu(m-1)}\|u\|_{s+2}+2^{\nu m}\|u\|_{s+2}2^{-\nu}\|u\|_{s+2}^{-\frac12}\big)\\
	&\le C_\mu2^{\nu(m-1)}\|u\|_{s+2}^\frac12 \le C\|u\|_{s+2}^\frac12\langle \xi \rangle^{m-1},
\end{align*}
where we used $\psi \in \mS(\R^d \times \R^d)$ and $2^{\nu-2} \le |\xi| \le 2^{\nu+2}$ in the last line. Together with \eqref{eq:sumeta} this shows \eqref{eq:c} for $\alpha=\beta=0$.

Now note that  $\partial_x^\beta\partial_\xi^\alpha b_\nu$ is given by \eqref{eq:gard2}  with $a_\nu$ replaced  by $\partial_x^\beta\partial_\xi^\alpha a_\nu$. Hence we obtain \eqref{eq:c} for $\alpha,\beta \neq 0$ by applying the argumentation above with $a_\nu$ replaced by $\partial_x^\beta\partial_\xi^\alpha a_\nu$ and $m$ replaced by $m-|\alpha|+|\beta|$. 

To finish the proof we show that 
$\mF_1h$ vanishes on $\mathcal{N}_{L}=\{(\eta,\xi) \in \R^d\times \R^d:|\eta+\xi| <L|\xi|\}$ with $L:=\min\{1-\eps_2,\frac12\}$.
Then the estimate on the operator norm follows by the continuity of $\op$.
As $a=R(F_u) \in S^{m,1-\eps_2}_{1,1}$, it suffices to prove that $\mF_1b$ vanishes on $\mathcal{N}_{\frac12}$. 

By standard arguments on convolution and Fourier transform we have for all $g \in \mS(\R^d \times \R^d)$ 
$$b_\nu(\mF_1g)=(\mu q_\nu)^{-d/2}\int_{\R^d}\int_{\R^d}a_\nu(y,\theta)\mF_1f(y,\theta)~d\theta ~dy,$$
where
\begin{equation}
	\label{eq:fxi}
	f(\eta,\theta)=\int_{\R^d} \mF_1\psi(\eta/(q_\nu\mu),(\xi-\theta)/q_\nu\mu)g(\eta,\xi)d\eta.
\end{equation}
Let $\supp g \subset \mathcal N_{1/2}$. By construction we have $\supp \mF_1 \psi \subset \{(\xi,\eta) \in \R^d\times \R^d: |\eta|, |\xi| \le D,\}$ for some $D>0$. Next choose $\nu_0 \in \N$ so large that $3D\mu \le q_{\nu_0}/2$. Then for $\nu \ge \nu_0$ on the support of the integrand of \eqref{eq:fxi} we have $|\eta|,|\xi-\theta| \le Dq_\nu\mu$ and $|\xi+\eta|+1< \frac12|\xi|$. The first and third inequality yield
$$|\xi| < 2|\eta|\le 2Dq_\nu\mu$$ 
and thus the second one gives
$$|\theta| \le Dq_\nu\mu+|\xi| < 3Dq_\nu\mu\le  q_\nu q_{\nu_0}/2\le2^{\nu/2} 2^{\nu_0/2-1}\le2^{\nu-1}.$$
But this implies $b_\nu(y,\theta)=0$ for all $y \in \R^d$. Therefore we have proven $b_\nu(\mF_1g)=0$ for all $\nu\ge\nu_0$ and $\supp g \subset \{(\xi,\eta) \in \R^d \times \R^d: |\xi+\eta| < \frac12|\xi|\}$. Hence this also holds for $b=\sum_{\nu \ge \nu_0}b_\nu$.

\end{proof}

\section{Dissipativity}

Throughout this section we consider \eqref{shhs1}, \eqref{shhs2} with smooth matrix families $A^j,B^{jk}:\mU \to \R^{n \times n}$, $u_0,u_1: \R^d \to \mathcal U$ and $u:[0,T] \times \R^d \to \mathcal U$ for some domain $\mU \subset \R^n$. Carrying out the differentiation with respect to $x_k$ on the right-hand side and distinguishing between space and time derivatives we write \eqref{shhs1} as
$$-B^{00}(u)u_{tt}=\sum_{j,k=1}^d B^{jk}(u)u_{x_jx_k}+\sum_{j=1}^d (B^{0j}(u)+B^{j0}(u)u_{t})_{x_j}-A^0(u)u_t-\sum_{j=1}^dA^j(u)u_{x_j}+Q(u,D_{t,x}u),$$
where $Q$ is of the form
$$Q(u,D_{t,x}u)=\sum_{l=1}^n\sum_{j,k=0}^dQ^{ljk}(u)u_{x_k}^lu_{x_j}.$$
We will see in the proofs that the specific form of the matrices $Q^{ljk}(u)$ does not play any role. Hence  multiplying \eqref{shhs1} by $(-B^{00})^{-1}$, we can assume $-B^{00}=I_n$ without loss of generality, which we will always do in the following.

Next, denote by
\begin{align*}
	 B(u,\bxi)&:=\sum_{j,k=1}^d B^{jk}(u)\xi_j\xi_k,~~ C(u,\bxi):=\sum_{j=1}^d (B^{0j}(u)+B^{j0}(u))\xi_j,\\
	A(u,\bxi)&:=\sum_{j=1}^d A^j(u)\xi_j,~~\bxi=(\xi_1,\ldots,\xi_n) \in \R^d.
\end{align*}
the symbols of the second and first order parts, respectively. Then the hyperbolicity of both sides of \eqref{shhs1} is expressed by the following conditions:

\begin{itemize}
	\item[(H$_A$)] (a) there exists a smooth bounded family of hermitian uniformly  positive definite  matrices $\Sigma: \mU \to \R^{n \times n}$ such that $\Sigma(u)A^0(u)$ is symmetric and uniformly positive on $\mU$,\\
	(b) the matrix family $ A_0(u)^{-1}A(u,\bxi)$ permits a symbolic symmetrizer $H(u,\bxi)$,
	\item[(H$_B$)]  with
	$$
	{\mB(u,\bxi)} =
	\begin{pmatrix} 0 & |\bxi|I_n  \\
		-|\bxi|^{-1}B(u,\bxi) & i C(u,\bxi) \end{pmatrix}, 
	\quad \bxi=(\xi_1,...,\xi_d)\in\R^d,$$
	the matrix family $i\mathcal{B}(u,\bxi)$ permits a symbolic symmetrizer $\mathcal H(u,\bxi)$.
\end{itemize}

Above we use the following notion of a symbolic symmetrizer (cf. e.g. \cite{T91}).

\begin{defi}
	\label{def:symsym}
	Let $K \in C^\infty(\mU \times \R^d\setminus\{0\},\C^{n \times n})$. A symbolic symmetrizer for $K$ is a smooth mapping $S \in C^\infty(\mU \times \R^d\setminus\{0\},\C^{n \times n})$ positive homogeneous of degree $0$ with respect to the second argument, bounded as well as all its derivatives on $\mU \times \mathbb S^{d-1}$  such that for some $c>0$ and all $(u,\bxi) \in \mU \times \R^d\setminus\{0\}$
	$$S(u,\bxi)=S(u,\bxi)^* \ge cI_n,$$
	and $S(u,\bxi)K(u,\bxi)=(S(u,\bxi)K(u,\bxi))^*$. 
\end{defi}

\begin{remark}
	\label{rem:multis}
$K$ admits a symbolic symmetrizer if $K$ is positive homogeneous of degree $1$, for all $(u,\bomega) \in \mU \times \mathbb S^{d-1}$ all eigenvalues of $K(u,\bxi)$ are real, semi-simple (i.e. their geometric and algebraic multiplicities coincide) and their multiplicities do not depend on $(u,\bomega)$ (cf. \cite{T91}, Proposition 5.2 C). If this holds for $A^{0}(u)^{-1}A(u,\bxi)$ or $\mB(u,\bxi)$ the respective operator is often called constantly hyperbolic.
\end{remark}

We now fix a homogeneous state $\bar u \in \mU$ and assume the following dissipativity conditions on the coefficient matrices.

{\bf Condition (D).} Matrices $A^j(\bar u), B^{jk}(\bar u)$ have three properties: \par\vskip -1cm
\phantom{x}
\begin{itemize}
	\item[(D1)] For every $\bomega\in \mathbb{S}^{d-1}$,
	all restrictions, as a quadratic form, of                        
	\begin{align*}
		W_1= H(\bar u,\bomega)(A^0(\bar u))^{-1} \big( &- B(\bar u,\bomega) + (A^0(\bar u))^{-1}(A(\bar u,\bomega))
		(A^0(\bar u))^{-1} A(\bar u,\bomega) \\
		&+ C(\bar u,\bomega)(A^0(\bar u))^{-1}A(\bar u,\bomega)  \big) ,
	\end{align*}
	on the eigenspaces $E=J_E^{-1}(\C^n)$ 	
	of
	$$
	W_0=(A^0(\bar u))^{-1}A(\bar u,\bomega)
	$$
	are uniformly negative in the sense that
	$$
	J_E^*\left(
	W_1+W_1^*
	\right)J_E\le -\bar c\
	J_E^*J_E
	\quad\text{with one }\bar c>0.$$
	\item[(D2)]
	For every $\bomega\in S^ {d-1}$,
	all restrictions, as a quadratic form, of
	\begin{equation}
	\mW_1=\mathcal H(\bar u,\bomega) \mathcal A(\bar u,\bomega), \quad \mathcal A(\bar u,\bomega)=
	\begin{pmatrix} 0 & 0 \\ -i A(\bar u,\bomega) & -A^0(\bar u)   \end{pmatrix}
	\end{equation}
	on the eigenspaces $\mE=\mJ_{\mE}^{-1}(\C^{2n})$ of
	\begin{equation}
	\mW_0=  \mathcal B(\bar u,\bomega)
	\end{equation}
	are uniformly negative in the sense that
	$$
	\mJ_{\mE}^*\left(
	\mW_1+\mW_1^*
	\right)\mJ_{\mE}\le -\bar c\ I_\mE
	\quad\text{with one }\bar c>0..
	$$
	\item[(D3)] All solutions
	$(\lambda,\bxi)\in\C\times(\R^d\setminus\{0\})$
	of the dispersion relation of \eqref{shhs1} at $\bar u=0$ have
	$\Rep(\lambda)<0$.
\end{itemize}

\begin{remark}
	Note that as (D) is an open condition there exists a neighbourhood  of $ \bar u$ such that $B^{jk}(u), A^j(u)$ satisfy $(D)$ with $ \bar u$ replaced by $u$ for all $u \in \mU_0$ with $\bar c$ independent of $u$.
\end{remark}

The following remark is useful in the proofs below.

\begin{remark}
	\label{rem:equiv}
	It is straightforward to show that (D1) and (D2) are equivalent to the same conditions with $W_0$, $W_1$ replaced by 
	$$\bar W_0:=H(\bar u,\bomega)^{\frac12}A(\bar u)^{-1}A(0,\bomega)H(\bar u,\bomega)^{-\frac12},~~\bar W_1:=H(\bar u,\bomega)^{-\frac12}W_1H(\bar u,\bomega)^{-\frac12}$$
	 and $\mathcal W_0$, $\mathcal W_1$ replaced by 
	 $$\bar{\mathcal{W}}_0:=\mH(\bar u,\bomega)^{\frac12}\mB(\bar u,\bomega)\mH(\bar u,\bomega)^{-\frac12},~~\bar{\mathcal W}_1:=\mH(\bar u,\bomega)^{\frac12}\mathcal A(\bar u,\bomega)\mH(\bar u,\bomega)^{-\frac12}.$$ 
\end{remark}

From now on we always assume (H$_A$), (H$_B$) and (D). As we could also consider \eqref{shhs1}, \eqref{shhs2} in the variable $u-\bar u$, we can w.l.o.g. restrict our argumentation to the case $\bar u=0$. 

We write \eqref{shhs1} as the first-order in time system
\begin{equation}\label{fos}
	\begin{aligned}
		u_t&=v\\
		v_t&=\sum_{j=1}^d (B^{j0}+B^{0j})(u)v_{x_j}+\sum_{j,k=1}^d B^{jk}(u)u_{x_jx_k}-A^0(u)v-\sum_{j=1}^dA^j(u)u_{x_j}+Q(u,D_{t,x}u)
	\end{aligned}
\end{equation} 
and denote by
\begin{equation}
	\bar{\mM}(u,\bxi)
	:= \begin{pmatrix}
		0 & I_n\\
		M(u,\bxi) & N(u,\bxi)
	\end{pmatrix},
\end{equation}
with
$$
M(u,\bxi)=-iA(u,\bxi)-B(u,\bxi), \quad N(u,\bxi) =  iC(u,\bxi) - A^0(u),
$$
 the Fourier symbol of \eqref{fos}. We also define 
$$\mM(u,\bxi):=\mZ(\bxi)\tilde \mM(u,\bxi) \mZ(\bxi)^{-1}$$
$$\mZ(\bxi)=\begin{pmatrix}
	\langle \bxi \rangle I_{n} & 0\\
	0 & I_n
\end{pmatrix}.$$

First we treat the linearization of \eqref{shhs1} at the reference state $u=0$, i.e.
\begin{equation}
	\label{eq:shhslin}
	\sum_{j=0}^d A_j(0)u_{x_j}=\sum_{j,k=0}^d B^{ij}(0)u_{x_ix_j}.
\end{equation}

Such linear systems were  studied in \cite{FS}, however under the stronger assumptions, that the coefficient matrices are symmetric and $A^0$ is positive definite. Then (H$_A$) is clearly satisfied with $F_A=I_n$ and $H=A^0$. Also, condition  (H$_B$) (b) ibid. requires the existence of a matrix family $\mS:\mathbb S^{d-1} \to \C^{n \times n}$ such that $i\mS(\bomega)\mB(0,\bomega)\mS(\bomega)^{-1}$ is real symmetric. But one can easily check that this can be relaxed to the assumption that $i\mS(\bomega)\mB(0,\bomega)\mS(\bomega)^{-1}$ is hermitian, which is satisfied in the present context for $S(\bomega):=\mH(0,\bomega)^{\frac12}$. Lastly, we want to point out that (D1), (D2) ibid. were stated in the equivalent form mentioned in Remark \ref{rem:equiv}.

We will make plausible below that the weaker conditions in the present work are still sufficient to retrieve the main result of \cite{FS}, namely:

\begin{prop}
	\label{prop:lin}
	There exist a $c>0$ and a family $\bxi\mapsto\mathcal T(\bxi),\R^d\to\C^{2n\times 2n}$
	of linear transformations of $\C^{2n}$ which, together with their inverses $\mathcal T(\bxi)^{-1}$,
	are uniformly bounded, such that 
	\begin{equation} \label{Ntilde2}
	\mT(\bxi)\mM(0,\bxi)\mT^{-1}(\bxi)+(\mT(\bxi)\mM(0,\bxi)\mT^{-1}(\bxi))^*\le -c\rho(\bxi)I_{2n},~~\bxi \in \R^d,
	\end{equation}
	where $\rho(\bxi)=|\bxi|^2/(1+|\bxi|^2)$.
\end{prop}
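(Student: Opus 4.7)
The plan is to adapt the proof in \cite{FS}, which established the analogous estimate under the stronger assumption that the coefficient matrices are symmetric with positive-definite $A^0$. Remark \ref{rem:equiv} already does much of the reduction: after conjugation by $H(0,\bxi/|\bxi|)^{1/2}$ and $\mH(0,\bxi/|\bxi|)^{1/2}$, conditions (D1) and (D2) become block-wise uniform negative-definiteness statements on the eigenspaces of the \emph{hermitian} matrices $\bar W_0, \bar{\mW}_0$. Since the construction in \cite{FS} ultimately only uses hermiticity and not real symmetry, this preliminary conjugation reduces the situation to one to which that construction directly applies, provided the conjugation itself is uniform in $\bxi$; this is guaranteed by Definition \ref{def:symsym}, which bounds $H$, $\mH$ and their inverses uniformly on $\mU\times \mathbb S^{d-1}$.

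I would then produce $\mT(\bxi)$ as a smooth patching, via radial cutoffs, of three pieces corresponding to the frequency regimes $|\bxi|\le r$, $r\le |\bxi|\le R$, $|\bxi|\ge R$. In the high-frequency regime the leading part of $\mM(0,\bxi)$ in $|\bxi|$ is $|\bxi|\,\mB(0,\bxi/|\bxi|)$; conjugating first by $\mH(0,\bxi/|\bxi|)^{1/2}$ and then by a Kreiss block-diagonalizer along the smoothly varying eigenspaces of $\mB(0,\bxi/|\bxi|)$ (their smoothness being supplied by the constant-multiplicity content of (H$_B$), cf.\ Remark \ref{rem:multis}), (D2) translates into an $O(1)$ dissipation rate on each block, consistent with $\rho(\bxi)\to 1$. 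In the low-frequency regime, an analogous construction built around $H$, $A^0(0)^{-1}A(0,\bxi/|\bxi|)$ and (D1) yields dissipation of order $|\bxi|^2\sim\rho(\bxi)$, the $B$-term contributing the second-order damping on each eigenspace as in \cite{FS}. In the intermediate regime, the compactness of the annulus together with (D3) provides a uniform spectral gap for $\mM(0,\bxi)$, from which a smooth point-wise Lyapunov symmetrizer is constructed by a standard partition-of-unity argument; here $\rho(\bxi)$ is bounded away from $0$ and $1$, so it enters only as a harmless multiplicative constant.

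The main obstacle is to glue the three constructions into a single $\mT(\bxi)$ for which both $\mT$ and $\mT^{-1}$ are uniformly bounded on $\R^d$ and for which \eqref{Ntilde2} holds with one constant $c$. This is handled by choosing the cutoff transitions narrow enough that the commutator errors introduced by the radial cutoffs are absorbed into the strict block-wise negativity available on either side; the uniformity of the $H$- and $\mH$-conjugations and the boundedness of the Kreiss block-diagonalizers (guaranteed by constant multiplicity on $\mathbb S^{d-1}$) make this absorption quantitative. Once the absorption is achieved the resulting $\mT(\bxi)$ yields \eqref{Ntilde2} with a dissipation rate that matches $\rho(\bxi)$ in each regime, i.e.\ $|\bxi|^2$ at low frequency and $O(1)$ at high frequency, as required.
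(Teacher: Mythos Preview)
Your proposal follows essentially the same three-regime strategy as the paper's proof, which in turn defers to \cite{FS} with only the minor modifications needed to relax real symmetry to the symbolic-symmetrizer setting. Two points are worth flagging.

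First, the ``main obstacle'' you identify---smoothly gluing the three pieces via radial cutoffs and absorbing the resulting commutator errors---is not present in the paper's argument, because the proposition requires only that $\mT$ and $\mT^{-1}$ be uniformly bounded, not continuous in $\bxi$. The paper simply defines $\mT(\bxi)$ piecewise on the three regimes and stops. Your gluing can of course be made to work (patch the positive symmetrizers $\mT^*\mT$ rather than $\mT$ itself), but it is unnecessary for this proposition; smooth dependence is only needed later, in Proposition \ref{lwn}, and there only for large $|\bxi|$.

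Second, in the high-frequency regime you invoke a Kreiss block-diagonalizer along ``smoothly varying eigenspaces of $\mB(0,\bxi/|\bxi|)$'', attributing the smoothness to ``the constant-multiplicity content of (H$_B$)''. But (H$_B$) assumes only the existence of a symbolic symmetrizer $\mH$, which by Remark \ref{rem:multis} is implied by---but not equivalent to---constant multiplicity. The paper does not use smooth eigenspace variation; it observes that for the intermediate and high-frequency regimes the argument of \cite{FS} uses only (H$_B$), (D2), (D3) and carries over verbatim once one notes that Lemma~5 there needs $i\bar{\mW}_0$ only hermitian, not real symmetric. For the low-frequency regime the paper is also slightly more explicit than your sketch: one first block-diagonalizes $\bar\mM$ via a bounded $\mathcal R(\xi,\bomega)$ (using only invertibility of $A^0(0)$) to separate an $X$-block with expansion $i\xi\bar W_0+\xi^2\bar W_1+O(\xi^3)$ from a $Y$-block $-\Sigma^{1/2}A^0\Sigma^{-1/2}$, the latter being positive definite by (H$_A$)(a); then (D1) and \cite{FS}, Lemma~5 finish.
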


As outlined in \cite{FS} this brings about the pointwise decay of solutions in Fourier space and thus the following decay estimate for the inhomogeneous linear Cauchy problem.

\begin{coro}
	\label{cor:lin}
For any $s \in \N_0$ there exists $C>0$ such that the following holds: For all $u_0 \in H^{s+1}\cap L^1 $, $u_1 \in H^s\cap L^1$ and  $f \in C([0,T],H^{s} \cap L^1)$ the solution $u$ of
$$f+\sum_{j=0}^d A_j(0)u_{x_j}=\sum_{j,k=0}^d B^{ij}(0)u_{x_ix_j}$$ 
with $u(0)=u_0$, $u_t(0)=u_1$ satisfies
\begin{align*}\|u(t)\|_{s+1}+\|u_t(t)\|_{s} &\le C(1+t)^{-\frac{d}{4}}(\|u_0\|_{s+1}+\|u_0\|_{L^1}+\|u_1\|_s+\|u_1\|_{L^1})\\
	&\quad C\int_0^t (1+t-\tau)^{-\frac{d}{4}}(\|f(\tau)\|_s +\|f(\tau)\|_{L^1})~d\tau
\end{align*}
for all $t \in [0,T]$.
\end{coro}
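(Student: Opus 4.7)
The plan is to reduce the inhomogeneous Cauchy problem to a first-order-in-time system whose propagator is controlled by Proposition 3.6, and then to combine Plancherel's theorem with a low/high frequency split. Setting $V := (\Lambda^1 u, u_t)^t$ with $\Lambda^1 := \mF^{-1}\langle \cdot \rangle \mF$, the linearised problem with source becomes
\begin{equation*}
V_t = \mM(0,D) V + G,\qquad V(0)=V_0:=(\Lambda^1 u_0,u_1)^t,\quad G:=(0,f)^t,
\end{equation*}
since $\mM = \mZ \bar{\mM} \mZ^{-1}$ and $\mZ(D)$ acts as the identity on the second component. This puts the propagator in precisely the form addressed by Proposition 3.6.

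First I would extract from Proposition 3.6 the pointwise-in-$\bxi$ decay
\begin{equation*}
|e^{t\mM(0,\bxi)}W| \le C e^{-c\rho(\bxi)t/2}|W|,\qquad t\ge 0,\ \bxi\in\R^d,\ W\in\C^{2n},
\end{equation*}
by integrating the differential inequality $\frac{d}{dt}|\mT(\bxi) e^{t\mM(0,\bxi)}W|^2 \le -c\rho(\bxi)|\mT(\bxi)e^{t\mM(0,\bxi)}W|^2$ and invoking the uniform boundedness of $\mT,\mT^{-1}$. Duhamel's formula then reduces the corollary to the semigroup estimate
\begin{equation*}
\|e^{t\mM(0,D)}V_0\|_{H^s} \le C(1+t)^{-d/4}(\|V_0\|_{L^1}+\|V_0\|_{H^s}),
\end{equation*}
applied both to $V_0$ and, with $t-\tau$ in place of $t$, to the integrand $e^{(t-\tau)\mM(0,D)}G(\tau)$.

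To prove the displayed bound I would apply Plancherel and split $\int_{\R^d} \langle \bxi \rangle^{2s} |\hat V(t,\bxi)|^2 d\bxi$ at $|\bxi|=1$. On $|\bxi|\le 1$ the relation $\rho(\bxi)\ge|\bxi|^2/2$ supplies a Gaussian weight $e^{-ct|\bxi|^2/2}$; combined with $\|\hat V_0\|_{L^\infty(|\bxi|\le 1)}\le C\|V_0\|_{L^1}$ and $\langle\bxi\rangle\le 2$, a standard calculation yields $C(1+t)^{-d/2}\|V_0\|_{L^1}^2$. On $|\bxi|>1$, $\rho(\bxi)\ge 1/2$ supplies a global factor $e^{-ct/4}$ which absorbs $\|V_0\|_{H^s}^2$ and is bounded by $C(1+t)^{-d/2}$. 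Summing the two pieces and taking square roots yields the claim.

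Finally, $\|V_0\|_{H^s}\simeq\|u_0\|_{H^{s+1}}+\|u_1\|_{H^s}$ is immediate from the construction. The one subtle point is that $\|\Lambda^1 u_0\|_{L^1}$ is not in general bounded by $\|u_0\|_{L^1}$; however, only the low-frequency values of $\hat V_0$ enter the argument, and for $|\bxi|\le 1$ one has $|\widehat{\Lambda^1 u_0}(\bxi)|\le\langle\bxi\rangle|\hat u_0(\bxi)|\le C\|u_0\|_{L^1}$, which is all that the low-frequency estimate actually uses. The main obstacle of the whole corollary is really Proposition 3.6 itself, which converts the dissipativity conditions (D1)--(D3) into pointwise-in-$\bxi$ dissipation of the propagator; once that is in hand, the Plancherel and frequency-split argument sketched above is classical and presents no further difficulty.
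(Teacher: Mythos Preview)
Your proposal is correct and follows exactly the route the paper indicates: the paper does not spell out a proof but states that Proposition~\ref{prop:lin} ``brings about the pointwise decay of solutions in Fourier space and thus the following decay estimate,'' referring to \cite{FS} for the details---which is precisely the semigroup bound plus low/high frequency split you carry out. Your observation that only $\|\hat V_0\|_{L^\infty(|\bxi|\le 1)}$ (rather than $\|\Lambda^1 u_0\|_{L^1}$) is needed on the low-frequency side is the right way to close the argument.
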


\begin{proof}[Proof of Proposition \ref{prop:lin}]
As stated above  the proof can be found essentially in \cite{FS}. We just illustrate at which points it has to be slightly modified.

The existence of a bounded family $\mT(\bxi) \subset \operatorname{Gl_{2n}}$\footnote{For $m \in \N$ $\operatorname{Gl}_m$ denotes the space of invertible $m \times m$-matrices.} satisfying \eqref{Ntilde2} is proven separately for the three different regimes $|\bxi| \le r_0$, $r_0 \le |\bxi| \le r_\infty$ and $|\bxi| \ge r_\infty$ for suitable $r_0,r_\infty>0$. In the latter two cases only ($(H)_B$) and conditions (D2), (D3) are used. The symmetry of the matrices plays no role whatsoever.

For small values of $|\bxi|$ writting $\bxi=\xi\bomega$ for $\xi > 0$, $\bomega \in \mathbb{S}^{d-1}$  one finds a bounded family of invertible
 $\mathcal R(\xi,\bomega)$ with $\mathcal R(\xi,\bomega)^{-1}$ also bounded and (supressing the argument $u=0$)
$$
\mathcal R(\xi,\bomega)\bar \mM(\xi\bomega)\mathcal R(\xi,\bomega)^{-1}=\begin{pmatrix}
	 X(\xi,\bomega) & 0\\
	 0 & Y(\xi,\bomega)
\end{pmatrix},$$
where
\begin{align*}
	X(\xi,\bomega)&=i\xi (A^{0})^{-1}A(\bomega)\\
	&+\xi^2(A^0)^{-1} \big(- B(\bomega) + (A^0)^{-1}(A(\bomega))
	(A^0)^{-1} A(\bomega) 
	+ C(\bomega)(A^0)^{-1}A(\bomega)  \big)+\mathcal O(\xi^3)\\
	Y(\xi,\bomega)&=-A^0+\mathcal O(\xi^3).
\end{align*}
This is due to the fact that $A^0(0)$ is invertible and again makes no use of the symmetry. Hence for 
$$\check{\mathcal R}(\xi,\bomega)=\begin{pmatrix} H(\bomega)^{\frac12} & 0\\
	0 & F_A^\frac12 \end{pmatrix}\check{\mathcal R}(\xi,\bomega)$$
we get
$$\check{\mathcal R}(\xi,\bomega)\mM(\xi\bomega)\check{\mathcal R}(\xi,\bomega)^{-1}=\begin{pmatrix}
	i\xi \bar W_0+\xi^2\bar W_1+\mathcal O(\xi^3) & \\
	0 & -F_A^{\frac12}A^0F_A^{-\frac12}.
\end{pmatrix}.$$
with $\bar W_0$, $\bar W_1$ as in Remark \ref{rem:equiv}. Since $F_A^{\frac12}A^0F_A^{-\frac12}$ is positive definite the existence of the family $\mT(\bxi)$ now follows for sufficiently small $\xi$ by condition (D1) and \cite{FS}, Lemma 5.\footnote{Note that in said Lemma it is sufficient to assume that $i\mM(0,\bomega)$ is selfadjoint instead of requiring $i\mM(0,\bomega)$ to be real symmetric.}
\end{proof}

In Section 4 we will see that, given $d \ge 3$, $s>d/2+1$, Corollary \ref{cor:lin} directly implies the decay of a solution to the quasi-linear problem \eqref{shhs1} in  $H^{s-1}$ but only provided that its $H^{s}$-norm  is a-priori known to be small. To close this gap we need to show that the $H^{s}$-norm of a small solution can be bounded by the initial conditions and $L^2$-norms of lower order derivatives. The rest of this section is devoted to a construction preparing such a result. 

In the following for $\bxi \in \R^d$ we write $\bxi=\xi\bomega$ with $\xi=|\bxi| \in [0,\infty), \bomega=\bxi/|\bxi| \in \mathbb{S}^{d-1}$.\\
 For $r>0,u \in \R^n, \bxi \in \R^d$ and $\bomega \in \mathbb{S}^{d-1}$ by $B^n(u,r), B^d(\bxi,r),B^{\mathbb S}(\bomega,r)$ we denote the balls with radius $r$ and center $u,\bxi, \bomega$ with respect to the metrices on $\R^n,\R^d,\mathbb S^{d-1}$. For some $\bomega^* \in \mathbb S^{d-1}$ and $\delta>0$ we use 
$$P(\bomega^*,\delta)=B^{n}(0,\delta) \times [0,\delta) \times B^{\mathbb S}(\bomega^*,\delta)$$. 

\begin{prop}
	\label{lwn}
	There exist $r>0, c_\infty>0$ and  
	  a  mapping $\mD_\infty\in C^\infty(\Omega_\infty,\C^{2n \times 2n})$, $\Omega_\infty:=\bar \mU_0 \times \{\bxi \in \R^d:|\bxi| \ge r^{-1}\}$, $\bar\mU_0:=\overline{B^{n}(0,r)} \subset \mU$, such that:
	\begin{enumerate} 
		\vspace{-0.5\baselineskip}
		\item[(i)] For all $(u,\bxi) \in \Omega_{\infty}$ $$\mD_\infty(u,\bxi)=\mD_\infty(u,\bxi)^* \ge c_\infty I_n,$$
		and  
		$$\mD_\infty(u,\bxi)\mM(u,\bxi)+(\mD_\infty(u,\bxi)\mM(u,\bxi))^* \le -c_\infty I_{2n}.$$
		\item[(ii)] For any $\alpha,\beta \in \N_0^d$ there exist $C_{\alpha\beta}>0$ with
		\begin{equation}
			\label{sym1}|\partial_u^\beta \partial_\xi^\alpha \mD_\infty (u,\bxi)| \le C_{\alpha\beta}\lxi^{-|\alpha|},~~(u,\bxi) \in  \Omega_\infty.
		\end{equation}
	\end{enumerate}
\end{prop}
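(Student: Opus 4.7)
My strategy is to construct $\mD_\infty$ as a small symmetric perturbation of the (H$_B$)-symmetrizer $\mH$ via a Kawashima--Shizuta-type compensating matrix, exploiting (D2) to upgrade the partial negativity on eigenspaces of $\mB$ to uniform negativity on $\C^{2n}$.

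Expanding $\lxi^{-1}=|\bxi|^{-1}+O(|\bxi|^{-3})$ and using the homogeneities of $A$, $B$, $C$, I will first rewrite, on $\Omega_\infty$,
\begin{equation*}
\mM(u,\bxi)=\mB(u,\bxi)+\mathcal A(u,\bomega)+\mathcal R(u,\bxi),\qquad|\partial_u^\beta\partial_\xi^\alpha\mathcal R(u,\bxi)|\le C_{\alpha\beta}\lxi^{-1-|\alpha|},
\end{equation*}
with $\mathcal A$ as in (D2). Passing to the normalized variables $\widehat{\mB}_0:=\mH^{1/2}\mB(u,\bomega)\mH^{-1/2}$ and $\widehat{\mathcal A}:=\mH^{1/2}\mathcal A\mH^{-1/2}$, I then use (H$_B$) together with Remark \ref{rem:multis} to conclude that $\widehat{\mB}_0$ is skew-Hermitian with distinct eigenvalues $i\mu_j(u,\bomega)$ of constant multiplicity, smooth and uniformly separated over the compact set $\bar\mU_0\times\mathbb S^{d-1}$, with correspondingly smooth spectral projectors $\widehat P_j$. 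The equivalent form (Remark \ref{rem:equiv}) of (D2) yields $\widehat P_j(\widehat{\mathcal A}+\widehat{\mathcal A}^*)\widehat P_j\le-\bar c\,\widehat P_j$.

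The algebraic core of the argument is then to define the Hermitian matrix
\begin{equation*}
\widehat K_0(u,\bomega):=-i\sum_{j\ne k}\frac{\widehat P_j(\widehat{\mathcal A}+\widehat{\mathcal A}^*)\widehat P_k}{\mu_j-\mu_k},
\end{equation*}
smooth in $(u,\bomega)$ by the above; a direct computation using $\widehat{\mB}_0\widehat P_l=i\mu_l\widehat P_l$ gives $[\widehat K_0,\widehat{\mB}_0]=-(\widehat{\mathcal A}+\widehat{\mathcal A}^*)_{\mathrm{off}}$, where the subscript denotes the off-diagonal part in the decomposition induced by the $\widehat P_j$. Hence $\widehat{\mathcal A}+\widehat{\mathcal A}^*+[\widehat K_0,\widehat{\mB}_0]=(\widehat{\mathcal A}+\widehat{\mathcal A}^*)_{\mathrm{diag}}\le-\bar c\, I_{2n}$.

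Setting $K_0:=\mH^{1/2}\widehat K_0\mH^{1/2}$ and $\mD_\infty(u,\bxi):=\mH(u,\bomega)+|\bxi|^{-1}K_0(u,\bomega)$, positivity $\mD_\infty\ge c_\infty I_{2n}$ holds for $r$ sufficiently small, since $K_0/|\bxi|$ is a uniformly bounded perturbation of $\mH\ge c\, I_{2n}$. For the dissipation estimate, the identities $\mH\mB+\mB^*\mH=0$ and $|\bxi|^{-1}\mB(u,\bxi)=\mB(u,\bomega)$ allow me to compute, after conjugating with $\mH^{-1/2}$, that the $O(1)$ part of $\mD_\infty\mM+(\mD_\infty\mM)^*$ equals $\widehat{\mathcal A}+\widehat{\mathcal A}^*+[\widehat K_0,\widehat{\mB}_0]\le-\bar c\, I_{2n}$, the remainder being an $O(|\bxi|^{-1})$ contribution from $\mathcal R$ and from the cross-terms with $K_0/|\bxi|$; a further shrinking of $r$ absorbs this error and yields (i). Property (ii) is then immediate from the smoothness of $\mH$ and $K_0$ on the compact set $\bar\mU_0\times\mathbb S^{d-1}$, the degree-zero behaviour of $\mH(u,\cdot)$ and $K_0(u,\cdot)/|\bxi|$, and the chain rule for $\bxi\mapsto(|\bxi|,\bxi/|\bxi|)$. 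The main obstacle I anticipate is the smoothness of $\widehat K_0$: the required uniform lower bound on $|\mu_j-\mu_k|$ over the compact parameter set rests essentially on the constant-multiplicity feature implied by the symbolic-symmetrizer hypothesis.
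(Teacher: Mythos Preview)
Your compensating-matrix construction is clean and, where it applies, essentially makes explicit what the paper hides behind the citation of Lemma~5 in \cite{FS}. But there is a genuine gap at the point you yourself flag as the ``main obstacle'': you claim that (H$_B$) together with Remark~\ref{rem:multis} yields constant multiplicity of the eigenvalues of $\widehat{\mB}_0$ over $\bar\mU_0\times\mathbb S^{d-1}$. Remark~\ref{rem:multis} is the \emph{converse} implication: constant multiplicity (plus semi-simplicity) is a \emph{sufficient} condition for the existence of a symbolic symmetrizer, not a consequence of it. Under (H$_B$) alone you only know that $\mH^{1/2}(i\mB)\mH^{-1/2}$ is Hermitian; nothing prevents its eigenvalues from crossing as $(u,\bomega)$ varies. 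At such crossings the individual spectral projectors $\widehat P_j$ and hence $\widehat K_0$ need not be smooth (or even continuous), and your global formula for $\mD_\infty$ breaks down.

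The paper circumvents exactly this by working \emph{locally} in $\bomega$: near each fixed $\bomega_0$ one can group the eigenvalues into well-separated clusters, build the analogue of your $\widehat K_0$ with respect to those clusters (this is what the cited Lemma~5 delivers), and then patch the resulting local symmetrizers $\tilde\mD_\bomega$ over $\mathbb S^{d-1}$ with a partition of unity $\{\phi_l\}$. The positivity and dissipativity inequalities survive the convex combination $\sum_l\phi_l\tilde\mD_l$ because they are one-sided. Your argument would be correct, and would essentially reproduce the paper's proof with the black box opened, if you inserted this localization step before building $\widehat K_0$; as written, it proves the proposition only under the additional (unassumed) hypothesis of constant hyperbolicity of $\mB$.
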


\begin{proof}
	Consider the mapping $\mK:\mU \times (0,\infty) \times \mathbb S^{d-1} \to \C^{2n \times 2n}$ defined by
\begin{equation} \label{K}
\mK(u,\eta,\bomega)=\begin{pmatrix}  0& I_n \\ 
	 -i\eta A(u,\bomega)-B(u,\bomega) &-i C(u,\bomega)-\eta A^0(u) \end{pmatrix},~~\bomega \in \mathbb S^{d-1}.
\end{equation}
    and $\mH(u,\bomega)$ denote the symmetrizer of $\mB(u,\bomega)$ as in condition (H$_B$) (b). Set
    $$\mW(u,\eta,\bomega):=\mH(u,\bomega)^{\frac12}\mK(u,\eta,\bomega)\mH(u,\bomega)^{-\frac12}.$$
    Since   
    $$\mK(0,0,\bomega) 
    = \begin{pmatrix}  0 & I_n  \\ 
    	 -B(0,\bomega) & i C(0,\bomega) \end{pmatrix} 
    = \mB(0,\bomega)$$   
    and
    $$\frac{\partial \mK}{\partial\eta}(0,0,\bomega)    =  \begin{pmatrix}  0& 0 \\ -i A(0,\bomega) & - A^0(0)   \end{pmatrix}=\mathcal{A}(0,\bomega)$$
     $\mW$ satisfies
    $$
    \mW(0,0,\bomega)=\bar\mW_0,\quad \frac{\partial\mW(0,0,\bomega)}{\partial\eta}
    =\bar \mW_1,$$
    with $\bar \mW_0,\bar\mW_1$ as in Remark \ref{rem:equiv}.
    Now fix $\bomega_0\in \mathbb S^{d-1}$. By virtue of condition (D2) it follows from Lemma 5 in \cite{FS} that there exists $\delta_0>0, c_0>0$ and  $\mT_0 \in C^\infty(P(\bomega^*,\delta_0), \rm{Gl}_{2n})$ with $\mT_0^{-1}$ also bounded such that pointwise on $P(\bomega_0,\delta_0)$
    $$\mT_0\mW \mT_0^{-1}+(\mT_0\mW \mT_0^{-1})^*\le-\tilde c\eta I_{2n}$$
    for some $\tilde c>0$. Hence 
   $\tilde\mD_0:=\mH^{\frac12}\mT_0^*\mT_0\mH^{\frac12}\in C^\infty(P(\delta_0,\bomega_0), \C^{2n \times 2n})$
   satisfies
   $$\tilde \mD_0(u,\xi,\bomega)=\tilde \mD_0(u,\xi, \bomega)^* \ge cI_{2n},~~(u,\xi,\bomega) \in P(\delta_0,\bomega_0)$$
   for some $c>0$ and thus
   $$\tilde \mD_0 \mK+(\tilde \mD_0\mK)^*\le -c \tilde c\eta I.$$
   In conclusion we have shown the following: For each $\bomega \in \mathbb S^{d-1}$ there exist $\delta_{\bomega}>0$, $c_{\bomega}>0$ and   $\mD_{\bomega} \in C^\infty(P(\bomega,\delta_{\bomega}), \C^{2n\times 2n})$ such that for all $(u,\xi,\bar \bomega) \in P(\bomega,\delta_{\bomega})$
   \begin{equation}
   	\label{eq:mDj} 
   	\begin{split}
   		\mD_{\bomega} (u,\eta,\bar \bomega)=\mD_{\bomega}(u,\eta, \bar \bomega)^* &\ge c_{\bomega}I\\
   		\mD_{\bomega} (u,\eta,\bar \bomega)\mK(u,\eta,\bar \bomega)+( \mD_{\bomega}(u,\eta,\bar\bomega)\mK(u,\eta,\bar \bomega))^* &\le -c_{\bomega}\xi^2I.
   	\end{split} 
   \end{equation}
   As $\mathbb S^{d-1}$ is compact we may choose $\bomega_1,\ldots,\bomega_r$ such that
   $$\bigcup_{l=1}^{\bar l} B^{\mathbb S}(\bomega_l,\delta_l/2)=\mathbb S^{d-1}~~(\delta_l:=\delta_{\bomega_l}).$$
   Set $r_0=\min\{\delta_1,\ldots,\delta_r\}$, $c_0=\min\{c_{\bomega_1},\ldots,c_{\bomega_r}\}$. Then for $l=1,\ldots,\bar l$ and $P_l:=B^n(0,r_0) \times [0,r_0) \times B^{\mathbb S}(\bomega_l,\delta_l)$ choose functions $ \phi_l \in C^\infty(\mathbb{S}^{d-1},[0,1])$ with $\supp \phi_l \subset B^{\mathbb S}(\bomega_j,\delta_l)$, $\phi_l=1$ on $B^{\mathbb S}(\bomega_j,\delta_j/2)$ and extend $\mD_l:= \mD_{\bomega_l}$ trivially by $0$ to a function defined on $B^{n}(0,r_0) \times [0,r_0) \times \mathbb S^{d-1}=:\Omega_0$. Define
   $$\mD_0:\Omega_{0} \to \C^{2n \times 2n}: (u,\eta,\bomega)\mapsto \sum_{l=1}^{\bar l} \phi_l(\bomega)\mD_l(u,\eta,\bomega).$$
   Then $\mD_0 \in C^\infty(\Omega_0, \C^{2n \times 2n}$), and $\mD_0(u,\eta,\bomega)$ is hermitian for all $(u,\eta,\bomega) \in \Omega_0$. Furthermore for $(u,\eta,\bomega) \in \Omega_0$ we have $\bomega \in B^{\mathbb S}(\bomega_k,\delta/2)$ for some $k \in \{1,\ldots,\bar l\}$ and thus as $ \mD_l(u,\eta,\bomega) \ge 0$
   $$\mD_0(u,\eta,\bomega)=\sum_{l=1}^{\bar l} \phi_l(\bomega)\mD_l(u,\eta,\bomega) \ge  \mD_k(u,\eta,\bomega) \ge c_0I.$$
   with the same reasoning we see
   $$\mD_0(u,\eta,\bomega) \mK(u,\eta,\bomega)+(\mD_0(u,\eta,\bomega) \mK(u,\eta,\bomega))^* \le -c_0\eta I_{2n},~~(u,\eta,\bomega) \in \Omega_0.$$
   
    Now note that for all $u,\xi,\bomega$
    \begin{align*}
    	\xi \mK(u,1/\xi,\bomega)&=\begin{pmatrix}  0& \xi I_n \\ 
    	 -i A(u,\bomega)-\xi B(u,\bomega) &-i\xi C(u,\bomega)- A^0(U) \end{pmatrix}\\
    	&=\tilde{\mZ}(\xi)\mM(u,\xi\bomega)\tilde{\mZ}(\xi)^{-1},
    \end{align*}
    where 
    $$\tilde{\mZ}(\xi)=\begin{pmatrix} \frac{\lxi}{\xi}I_{n} & 0 \\
    0 & I_n\end{pmatrix}.$$
     As clearly $\tilde \mZ, \tilde \mZ^{-1} \in C^\infty((r_0^{-1},\infty),\C^{2n\times 2n})$ are symmetric and positive definite on $(r_0^{-1}, \infty)$, for  $r:=r_0/2$, $\Omega_\infty:=\overline{B^n(0,r)} \times \{\bxi \in \R^d:|\bxi| \ge r^{-1}\}$ the mapping
    $$\mD_\infty:\Omega_{\infty}\to \C^{2n \times 2n}, (u,\bxi)\mapsto \tilde \mZ(|\bxi|)\mD_0(u,1/|\bxi|,\bxi/|\bxi|)\tilde \mZ(|\bxi|)$$
    is in $C^\infty(\Omega_\infty,\C^{2n \times 2n})$ and for all $(u,\bxi) \in \Omega_{\infty}$	$$\mD_\infty(u,\bxi)=\mD_\infty(u,\bxi)^* \ge c_\infty I_{2n}$$
    for some $c_\infty>0$. Since for $\bxi=\xi \bomega \in \mU_0$
    $$\mD_\infty(u,\bxi)\mM(u,\bxi)=\xi\tilde\mZ(\xi)\mD_0(u,1/\xi,\bomega)\tilde\mZ(\xi)\mK(u,1\xi,\bomega)=\xi\tilde \mZ(\xi)\tilde \mD_0(u,1/\xi,\bomega)\mK(u,1/\xi)\tilde\mZ(\xi),$$
    we also have 
    \begin{align*}
    	\mD_\infty(u,\bxi)\mM(u,\bxi)+(\mD_\infty(u,\bxi)\mM(u,\bxi))^* \le -c_\infty I_{2n}
\end{align*}
    for some $c_\infty>0$.
    
    It remains to verify \eqref{sym1}. First note that the functions $\bxi \mapsto \langle |\bxi| \rangle/|\bxi|$, $\bxi \mapsto \xi_k/|\bxi|$, $k=1,\ldots,d$ and $\bxi \mapsto 1/|\bxi|$ are positive homogeneous of degree $0$ and $-1$, respectively. Thus for any $\alpha \in \N_0^d$ there exists $C_\alpha>0$ such that for  all $\bxi \in \R^d$ with $|\bxi|>2r_0^{-1}$ 
    $$|D^\alpha \mZ(\bxi)|+ |D^{\alpha}(\xi_k/|\bxi|)|+ |D^\alpha(1/|\bxi|)|\le C_\alpha \langle \bxi \rangle^{-|\alpha|}.$$
    Since $\mD_0$ as well as all of its derivatives are bounded on $\overline{B^{n}(0,r_0/2)} \times [0,r_0/2] \times \mathbb S^{d-1}$
    the estimate \eqref{sym1} follows by product and chain rule.
\end{proof}

\newpage
\section{Proof of Theorem \ref{theo:main}}

To begin with, we remark that local well-posednes sof \eqref{shhs1}, \eqref{shhs2} follows from the existing theory for hyperbolic systems of any order \cite{T91}.\footnote{For example, the recent result in \cite{bem212}, which applies to the class we study in Section 5, is of this type.} Our task thus consists in showing that under an a priori smallness assumption the solution satisfies the decay and energy estimates \eqref{eq:decay} and \eqref{eq:energy}, for, w.l.o.g., $\bar u=0$. Then we can extend them globally by standard methods (cf. e.g. \cite{kawa83}, proof of Theorem 3.6). We show the following.

\begin{prop}
	\label{prop:main}
	Consider $d \ge 3$, $s>d/2+1$ and assume (H$_B$), (H$_A$) and (D). Then there exist constants $\mu>0$, $\delta=\delta(\mu)>0$, and $C=C(\mu,\delta)>0$ (all independent of $T$) such that the following holds:
	For all $u_0 \in H^{s+1} $, $u_1 \in H^s$ with $\|u_0\|_{s+1}+\|u_1\|_s <\delta$ and all $u\in C^0([0,T],H^{s+1})\cap C^1([0,T], H^s)$ 
	satisfying \eqref{shhs1}, \eqref{shhs2} and 
	$$\sup_{t \in [0,T]}\|u(t)\|_{s+1}^2+\|u_t(t)\|_{s}^2+\int_0^T \|u(\tau)\|_{s+1}^2+\|u_t(\tau)\|_s^2~d\tau  \le \mu$$ we have for all $t \in [0,T]$
	\begin{align}
		\label{eq:decay1}
		\|u(t)\|_{s}+\|u_t(t)\|_{s-1} &\le C(1+t)^{-\frac{d}{4}}(\|u_0\|_{s}+\|u_0\|_{L^1}+\|u_1\|_{s-1}+\|u_1\|_{L^1}),\\
		\label{eq:decay20}
		\|u(t)\|_{s+1}^2+\|u_t(t)\|_{s}^2&+\int_0^t \|u(\tau)\|_{s+1}^2+\|u_t(\tau)\|_{s}^2 \le C(\|u_0\|_{s+1}^2+\|u_0\|_{L^1}^2+\|u_1\|_{s1}^2+\|u_1\|_{L^1}^2)
	\end{align}
\end{prop}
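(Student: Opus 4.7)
The plan is a standard bootstrap argument in two steps: I first prove the decay estimate \eqref{eq:decay1} using Corollary \ref{cor:lin} applied in Duhamel form, and then prove the energy estimate \eqref{eq:decay20} by a para-differential energy method at top order, absorbing all nonlinear corrections thanks to the smallness of $\mu$.

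For \eqref{eq:decay1}, I would rewrite \eqref{shhs1} as its linearization at $\bar u=0$ plus a source $f=f(u,Du,D^2u)$ consisting of the differences $A^j(u)-A^j(0)$ and $B^{jk}(u)-B^{jk}(0)$ applied to derivatives of $u$, together with the quadratic term $Q(u,D_{t,x}u)$. Since $s>d/2+1$, Sobolev embedding and Moser estimates give
\begin{equation*}
\|f(\tau)\|_{s-1}+\|f(\tau)\|_{L^1} \le C\bigl(\|u(\tau)\|_{s+1}+\|u_t(\tau)\|_s\bigr)\bigl(\|u(\tau)\|_s+\|u_t(\tau)\|_{s-1}\bigr).
\end{equation*}
Introducing the weighted seminorm
\begin{equation*}
N(t):=\sup_{0\le\tau\le t}(1+\tau)^{d/4}\bigl(\|u(\tau)\|_s+\|u_t(\tau)\|_{s-1}\bigr)
\end{equation*}
and using Corollary \ref{cor:lin} together with $\int_0^t(1+t-\tau)^{-d/4}(1+\tau)^{-d/2}d\tau \lesssim (1+t)^{-d/4}$ (which holds for $d\ge 3$), one obtains $N(t)\le C(\|u_0\|_s+\|u_0\|_{L^1}+\|u_1\|_{s-1}+\|u_1\|_{L^1})+C\sqrt{\mu}\,N(t)$, and \eqref{eq:decay1} follows by absorption once $\mu$ is small enough.

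For \eqref{eq:decay20} the difficulty is concentrated at the top order. For $\ell\le s-1$ a classical energy method using the symmetrizer $\mT$ from Proposition \ref{prop:lin} together with Moser estimates yields, after standard Kawashima-type manipulations,
\begin{equation*}
\tfrac{d}{dt}E_\ell + c\bigl(\|u\|_{\ell+1}^2+\|u_t\|_\ell^2\bigr) \le C\sqrt{\mu}\bigl(\|u\|_{\ell+1}^2+\|u_t\|_\ell^2\bigr)+\text{(lower-order terms)},
\end{equation*}
with $E_\ell\simeq \|u\|_{\ell+1}^2+\|u_t\|_\ell^2$. At the top level $\ell=s$ classical commutators would lose a derivative that the dissipation cannot recover, so I would pass to the first-order formulation \eqref{fos}, write $U:=(\Lambda u,u_t)^T$, and use the para-differential energy
\begin{equation*}
\mathcal F(t):=\langle\Op[\mD_\infty(u(t),\cdot)]\Lambda^s U(t),\Lambda^s U(t)\rangle_{L^2}+C_0\|U(t)\|_{s-1}^2,
\end{equation*}
where $\mD_\infty$ is the symbol of Proposition \ref{lwn}, extended smoothly to all frequencies, and $C_0$ is large enough that $\mathcal F(t)\simeq \|u(t)\|_{s+1}^2+\|u_t(t)\|_s^2$ (via Proposition \ref{prop:central}(i)) after the low-frequency regime $|\bxi|<r^{-1}$ is dominated by the $C_0$-term. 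Differentiating $\mathcal F$, the principal contribution
\begin{equation*}
\langle\Op[\mD_\infty\mM+(\mD_\infty\mM)^*]\Lambda^s U,\Lambda^s U\rangle_{L^2}
\end{equation*}
is bounded above by $-c\|\Lambda^s U\|^2+C\mu^{1/4}\|U\|_{s-\frac12}^2$ by Proposition \ref{lwn}(i) combined with the refined G\r{a}rding inequality of Proposition \ref{prop:gading}, applied with its regularity index equal to $s-1>d/2$, so that the factor $\|u\|_{s+1}^{1/2}\le\mu^{1/4}$ is small. The adjoint/product corrections $\Op[\mD_\infty]^*-\Op[\mD_\infty^*]$ and $\Op[\mD_\infty]\Op[\mM]-\Op[\mD_\infty \mM]$ carry a factor $\|u\|_s\le\sqrt{\mu}$ by Proposition \ref{prop:central}(ii),(iii), and the term coming from $\frac{d}{dt}\Op[\mD_\infty(u(t),\cdot)]$ is controlled by Proposition \ref{prop:dev} by $C\|u_t\|_s\|\Lambda^s U\|^2\lesssim\sqrt{\mu}\|\Lambda^s U\|^2$. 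For $\mu$ small, all these corrections are absorbed into the dissipation, giving the top-order differential inequality; combined with the $\ell\le s-1$ estimates and with the decay \eqref{eq:decay1} plugged into the lower-order driving terms (integrable since $d\ge 3$), it yields \eqref{eq:decay20}.

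The main obstacle is exactly this top-order step. Because $B(u,\bxi)$ is not assumed symmetric, coercivity of $\|u\|_{s+1}$ in the energy identity is available only through the symbolic symmetrizer $\mD_\infty$, and the standard para-differential calculus would generate adjoint and composition errors of size $\|u\|_{W^{1,\infty}}$, which is too large to absorb. The quantitative refinements of Section 2, in particular the $\|u\|_{s+2}^{1/2}$-type factor in Proposition \ref{prop:gading} and the explicit $\|u\|_s$-factors in Proposition \ref{prop:central}, are precisely what ensures that every such error term inherits a genuinely small coefficient from the a priori assumption, so that the bootstrap closes.
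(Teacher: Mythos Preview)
Your plan is essentially the paper's: split into the decay estimate (via Duhamel and Corollary~\ref{cor:lin}) and a top-order para-differential energy estimate built on the symmetrizer $\mD_\infty$ of Proposition~\ref{lwn}, then feed the former into the latter. A few points where your sketch drifts from what actually closes:

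\emph{Decay step.} As written, your bootstrap is inconsistent: bounding the source by $C(\|u\|_{s+1}+\|u_t\|_s)(\|u\|_s+\|u_t\|_{s-1})\le C\sqrt{\mu}\,(1+\tau)^{-d/4}N(t)$ gives the convolution $\int_0^t(1+t-\tau)^{-d/4}(1+\tau)^{-d/4}d\tau$, not the $(1+\tau)^{-d/2}$ you quote, and that integral is \emph{not} $O((1+t)^{-d/4})$ for $d=3,4$. The paper (following Kawashima) instead keeps the factor $\|u\|_{s+1}+\|u_t\|_s$ inside the time integral and closes via Cauchy--Schwarz against the a~priori bound $\int_0^T(\|u\|_{s+1}^2+\|u_t\|_s^2)\,d\tau\le\mu$; this is where the assumed $L^2_t$ control of the top norm is genuinely used.

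\emph{Lower-order energy hierarchy.} Your ``$\ell\le s-1$'' step with the symmetrizer $\mT$ from Proposition~\ref{prop:lin} is not needed and not really available: $\mT(\bxi)$ is only defined at the frozen state $u=0$ and is merely bounded, not a symbol. The paper does \emph{only} the para-differential estimate at the top level, obtaining \eqref{eq:wichtig}, and then bounds the lower-order integral $\int_0^t(\|u\|_s^2+\|u_t\|_{s-1}^2)\,d\tau$ directly by squaring and integrating the decay estimate \eqref{eq:decay1} (integrable since $d\ge3$).

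\emph{Top-order energy.} Your $\mF(t)$ is the right object, but the paper arranges positivity a bit differently: it adds a low-frequency $\psi(\xi)I_{2n}$ to the symbol (so $\tilde\mD_u$ is uniformly positive definite everywhere), explicitly symmetrizes via $\mG_u=\tfrac12(\Op[\tilde\mD_u]+\Op[\tilde\mD_u]^*)+(\op[\tilde\mD_0]-\Op[\tilde\mD_0])$, and works with $W=\Lambda^sJ_\eps(\Lambda u,u_t)$ using a Friedrichs mollifier to justify the computations. Otherwise your accounting of the error terms (Propositions~\ref{prop:central}, \ref{prop:dev}, and the sharp G\r{a}rding inequality~\ref{prop:gading} producing the crucial $\|u\|_{s+1}^{1/2}\le\mu^{1/4}$ factor) matches the paper's Lemmas~4.5--4.7.
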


We split the proof into two parts corresponding to the following two assertions.

\begin{prop} 
	\label{prop:decay}
In the situation of Proposition \ref{prop:main} there exist $\mu>0$, $\delta>0$, and $C>0$ such that the following holds:
For all $u_0 \in H^{s+1}\cap L^1 $, $u_1 \in H^s \cap L^1$ with $\|u_0\|_{s+1}+\|u_1\|_s, \|u_0\|_{L^1}+\|u_1\|_{L^1} <\delta$ and all $u\in C^0([0,T],H^{s+1})\cap C^1([0,T], H^s)$ 
satisfying \eqref{shhs1}, \eqref{shhs2} and 
$$\sup_{t \in [0,T]}\|u(t)\|_{s+1}^2+\|u_t(t)\|_{s+1}^2+\int_0^T \|u(\tau)\|_{s+1}^2+\|u_t(\tau)\|_s^2~d\tau  \le \mu$$ 
\eqref{eq:decay} holds for all $t \in [0,T]$.
\end{prop}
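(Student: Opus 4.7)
The plan is to decompose the equation into the constant-coefficient problem linearized at $0$ plus a purely quadratic source, apply the linear decay estimate of Corollary \ref{cor:lin}, and close the resulting integral inequality by a time-weighted bootstrap that exploits the a priori bound $\int_0^T (\|u(\tau)\|_{s+1}^2+\|u_t(\tau)\|_s^2)\,d\tau \le \mu$.

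First I would rewrite \eqref{shhs1} as
\[
\sum_{j=0}^d A^j(0)\,u_{x_j} - \sum_{j,k=0}^d B^{jk}(0)\,u_{x_j x_k} = f,
\]
where, using that $A^j(u)-A^j(0)$ and $B^{jk}(u)-B^{jk}(0)$ vanish linearly at $u=0$,
\[
f = -\sum_{j=0}^d [A^j(u)-A^j(0)]\,u_{x_j} + \sum_{j,k=0}^d [B^{jk}(u)-B^{jk}(0)]\,u_{x_j x_k} + \widetilde Q(u,D_{t,x}u)
\]
is a sum of products of one factor vanishing at $u=0$ and one derivative (of order $\le 2$) of $u$; here $\widetilde Q$ collects the original $Q$ together with the commutator terms coming from differentiating $B^{jk}(u)$. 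Applying Corollary \ref{cor:lin} with $s$ replaced by $s-1$, and writing $\phi(\tau):=\|u(\tau)\|_s+\|u_t(\tau)\|_{s-1}$, $\psi(\tau):=\|u(\tau)\|_{s+1}+\|u_t(\tau)\|_s$, $E_0:=\|u_0\|_s+\|u_0\|_{L^1}+\|u_1\|_{s-1}+\|u_1\|_{L^1}$, yields
\[
\phi(t) \le C(1+t)^{-d/4} E_0 + C\int_0^t (1+t-\tau)^{-d/4}\bigl(\|f(\tau)\|_{s-1}+\|f(\tau)\|_{L^1}\bigr)\,d\tau.
\]

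Next, I would combine Moser-type product estimates in $H^{s-1}$ (requiring only $s-1>d/2$) with the Sobolev embeddings $H^{s-1}\hookrightarrow L^\infty$ and $H^{s+1}\hookrightarrow W^{2,\infty}$ (both available since $s>d/2+1$) to obtain $\|f(\tau)\|_{H^{s-1}}\le C\,\phi(\tau)\,\psi(\tau)$; the $\psi$-factor arises precisely from placing the second-order derivative $u_{x_j x_k}$ of the $B$-term into $H^{s-1}$, while the linearly-vanishing coefficient and all lower-order factors are absorbed into $\phi$. Writing each quadratic product in $f$ as a pair of $L^2$-factors and applying Cauchy--Schwarz in $x$ yields the same bound $\|f(\tau)\|_{L^1}\le C\,\phi(\tau)\,\psi(\tau)$.

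Finally, setting $N(t):=\sup_{0\le\tau\le t}(1+\tau)^{d/4}\phi(\tau)$, substituting $\phi(\tau)\le (1+\tau)^{-d/4}N(t)$ into the Duhamel inequality, and applying Cauchy--Schwarz in $\tau$ together with the standard convolution estimate
\[
\int_0^t (1+t-\tau)^{-d/2}(1+\tau)^{-d/2}\,d\tau \le C(1+t)^{-d/2},
\]
valid for $d\ge 3$ (so that $d/2>1$), and the a priori bound $\|\psi\|_{L^2(0,T)}\le\sqrt{\mu}$, produce
\[
(1+t)^{d/4}\phi(t) \le CE_0 + C\sqrt{\mu}\,N(t),\qquad t\in[0,T].
\]
Taking the supremum in $t$ gives $N(t)\le CE_0 + C\sqrt{\mu}\,N(t)$; choosing $\mu$ so that $C\sqrt{\mu}\le 1/2$ absorbs the last term and produces $N(t)\le 2CE_0$, which is \eqref{eq:decay}. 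The main obstacle is the convolution estimate, which is exactly where the restriction $d\ge 3$ is essential; a secondary care point is the bookkeeping in the product estimates to ensure that the highest-order derivative is always placed into the $\psi$-factor, so that the $L^2_t$-smallness of $\psi$ (rather than any pointwise-in-$t$ smallness) can be exchanged, via Cauchy--Schwarz in $\tau$, for the temporal decay weight $(1+t)^{-d/4}$.
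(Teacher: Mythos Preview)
Your proof is correct and follows essentially the same route as the paper: rewrite \eqref{shhs1} as the constant-coefficient linear problem at $0$ plus a quadratic source, bound the source by $C\phi(\tau)\psi(\tau)$ via Moser-type product estimates, apply Corollary~\ref{cor:lin} at regularity $s-1$, and close by a time-weighted bootstrap. The only difference is in the closing step: the paper pulls $\sup_\tau\psi(\tau)$ outside the Duhamel integral to obtain \eqref{eq:decay2} and then defers to Kawashima \cite{kawa83}, Proposition~3.3, merely noting that $(1+t)^{-d/4}\in L^2(0,\infty)$ for $d\ge 3$; you instead keep $\psi$ inside and apply Cauchy--Schwarz in $\tau$ together with the convolution bound $\int_0^t (1+t-\tau)^{-d/2}(1+\tau)^{-d/2}\,d\tau\le C(1+t)^{-d/2}$ and the a~priori bound $\|\psi\|_{L^2(0,T)}\le\sqrt{\mu}$. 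Your version makes the role of $d\ge 3$ and of the $L^2_t$-smallness of $\psi$ fully explicit and is arguably cleaner, but the two closures are equivalent in spirit.
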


\begin{prop}
	\label{prop:energy}
In the situation of Proposition \ref{prop:main} there exist $\mu>0$,  and $C>0$ such that the following holds:
For all $u_0 \in H^{s+1} $, $u_1 \in H^s$ and all $u\in C^0([0,T],H^{s+1})\cap C^1([0,T], H^s)$ 
satisfying \eqref{shhs1}, \eqref{shhs2} and 
$$\sup_{t \in [0,T]}\|u(t)\|_{s+1}^2+\|u_t(t)\|_{s+1}^2+\int_0^T \|u(\tau)\|_{s+1}^2+\|u_t(\tau)\|_s^2~d\tau  \le \mu$$ we have for all $t \in [0,T]$
\begin{equation}
	\label{eq:wichtig}
	\begin{split}
		& \|u(t)\|_{s+1}^2+\|u_t(t)\|_{{s}}^2+\int_0^t  \|u(\tau)\|_{s}^2+\|u_t(\tau)\|_{{s-1}}^2 d\tau\\
		& \le C(\|u_0\|_{{s+1}}^2+\|u_1\|_{s}^2)+C\int_{0}^t \|u(\tau)\|_{s}^2+\|u_t(\tau)\|_{{s-1}}^2 ~d\tau.
	\end{split}
\end{equation}
\end{prop}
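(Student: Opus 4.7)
I would work with the first-order reformulation \eqref{fos} with state $U:=(u,u_t)^T$ and principal Fourier symbol $\bar\mM(u,\bxi)$, then estimate the weighted state $W(t):=\Lambda^s\mZ(D)U(t)$, for which $\|W(t)\|_{L^2}^2=\|u(t)\|_{s+1}^2+\|u_t(t)\|_s^2$. The first reduction is to paralinearize the system: replace each pointwise multiplier $A^j(u),B^{jk}(u)$ by its para-product $\Op[(A^j)_u],\Op[(B^{jk})_u]$, and likewise for the semilinear quadratic term $Q(u,D_{t,x}u)$. Lemma \ref{lem:diffmult} together with Proposition \ref{prop:central} guarantees that the paralinearization remainders act as bounded operators from $H^{s+1}$ to $H^s$ whose norms are controlled by a lower Sobolev norm of $u$; hence their contribution to the energy balance falls under the low-order integrand $\|u\|_s^2+\|u_t\|_{s-1}^2$ on the right-hand side of \eqref{eq:wichtig}.

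\textbf{Energy functional and dissipation.} The natural energy is
$$\mE(t):=\bigl\langle\Op[\mD_{\infty,u(t)}]\,W(t),\,W(t)\bigr\rangle_{L^2},$$
where $\mD_\infty$ is the symmetrizer from Proposition \ref{lwn}, extended by a smooth $\bxi$-cut-off to a symbol of order zero defined on all of $\R^d$. The positivity $\mD_\infty\ge c_\infty I$ on high frequencies combined with the Gårding-type Proposition \ref{prop:gading} applied to $\mD_\infty-\tfrac{c_\infty}{2}I$ renders $\mE(t)$ equivalent to $\|u(t)\|_{s+1}^2+\|u_t(t)\|_s^2$ modulo a $\|u\|_s^2+\|u_t\|_{s-1}^2$-contribution, provided the smallness parameter $\mu$ is small enough. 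Differentiating,
$$\frac{d}{dt}\mE(t)=\bigl\langle\partial_t\Op[\mD_{\infty,u(t)}]\,W,W\bigr\rangle+2\,\Re\bigl\langle\Op[\mD_{\infty,u(t)}]\,W_t,\,W\bigr\rangle;$$
Proposition \ref{prop:dev} bounds the first summand by $C\|u_t\|_s\|W\|^2\le C\mu^{1/2}\|W\|^2$, absorbed for $\mu$ small. Substituting the paralinearized equation for $W_t$ and using Proposition \ref{prop:central}(ii)--(iii) to rewrite $2\Re\Op[\mD_\infty]\Op[\mM]$ as $\Op[\mD_\infty\mM+(\mD_\infty\mM)^*]$ plus an operator of one order lower whose norm is controlled by a lower Sobolev norm of $u$, the principal dissipative contribution involves the symbol $\mD_\infty\mM+(\mD_\infty\mM)^*$, for which Proposition \ref{lwn}(i) gives $\le -c_\infty I$ on $\{|\bxi|\ge r^{-1}\}$. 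A second application of Proposition \ref{prop:gading} then yields
$$2\,\Re\,\bigl\langle\Op[\mD_\infty\mM]_u\,W,W\bigr\rangle\le -c\|W\|_{L^2}^2+C\|u\|_{s+1}^{1/2}\|W\|_{L^2}^2+c_q\|W\|_{-q}^2,$$
the first summand being exactly the desired dissipation $-c(\|u\|_{s+1}^2+\|u_t\|_s^2)$, the second absorbed by smallness, and the third (for $q\ge 1$) majorised by $\|u\|_s^2+\|u_t\|_{s-1}^2$.

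\textbf{Closing the argument and the main obstacle.} Integrating the resulting differential inequality from $0$ to $t$ and invoking the equivalence of $\mE$ with the high-norm energy yields \eqref{eq:wichtig}. The principal obstacle lies in arranging that \emph{every} error produced along the way — the paralinearization remainders for $A^j(u)$ and $B^{jk}(u)$, the product error $\Op[\mD_\infty]\Op[\mM]-\Op[\mD_\infty\mM]$, the adjoint error $\Op[\mD_\infty]^*-\Op[\mD_\infty^*]$, the time-derivative $\partial_t\Op[\mD_{\infty,u(t)}]$, and the para-linearized quadratic $Q$-term — produces only an order-reduced operator whose $\mL(H^{s+1},H^s)$-norm depends on $u$ through the lower norm $\|u\|_s$ (or a small factor $\mu^{1/2}$ times the dissipation), rather than through the full $H^{s+1}$-norm. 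Classical $S^{m}_{1,1}$-calculus only yields qualitative boundedness of such operator norms on the class of admissible $u$, which is insufficient to close the estimate; it is precisely the quantitative continuous dependence of operator norms on $u$ established in Proposition \ref{prop:central}, together with the sharp $\|u\|^{1/2}$-remainder in the Gårding inequality Proposition \ref{prop:gading}, that make the high-regularity energy estimate viable.
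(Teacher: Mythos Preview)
Your proposal is correct and follows essentially the same route as the paper: paralinearize to $W_t=\Op[\mM_u]W+R_1$, build an energy from $\Op[\mD_{\infty,u}]$, and extract the dissipation via the G\r{a}rding inequality Proposition~\ref{prop:gading} applied to $\mD_\infty\mM+(\mD_\infty\mM)^*+c_\infty I\ge 0$. The paper's only substantive refinements are that (i) it uses the explicitly self-adjoint functional $\mG_u=\tfrac12(\Op[\tilde\mD_u]+\Op[\tilde\mD_u]^*)+\op[\tilde\mD_0]-\Op[\tilde\mD_0]$ with $\tilde\mD_u=\mD_u+\psi(\xi)I_{2n}$ and obtains its positive-definiteness directly from the smallness of $\Op[\tilde\mD_u-\tilde\mD_0]$ rather than via G\r{a}rding---note that your identity $\tfrac{d}{dt}\mE=\langle\partial_t\Op[\mD]W,W\rangle+2\Re\langle\Op[\mD]W_t,W\rangle$ is only valid once the energy operator is self-adjoint, so you would need to symmetrize anyway---and (ii) it inserts a Friedrichs mollifier $J_\eps$ in $W=\Lambda^sJ_\eps V$ to justify the formal computations before passing to the limit.
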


From there Proposition \ref{prop:main} clearly follows by multiplying \eqref{eq:wichtig} with a sufficiently small factor integrating, \eqref{eq:decay} with respect to $t$, and adding the resulting inequalities.

For notational reasons we write the first order representation  \eqref{fos} of \eqref{shhs1} in the compact form
 \begin{equation} 
 	\label{pde}
 U_t=L(u)U+(0,Q(u,D_{x,t}u))^t
 \end{equation}
 with $U=(u,u_t)$,
 $$L(u)=\begin{pmatrix}
 	 0 & I_n\\
 	 \sum_{j,k=1}^d B^{jk}(u)\partial_{x_j}\partial_{x_k}-\sum_{j=1}^d A^j(u)\partial_{x_j} & \sum_{j=1}^d (\bar{B}^{j0}+\bar B^{0j})(u)\partial_{x_j} -A^0.
 \end{pmatrix}$$

\begin{proof}[Proof of Proposition \ref{prop:decay}]
As $s>d/2+1$ we find by Moser type inequalities (cf. \cite{BS} Appendix C and the references therein)
\begin{align*}
	\|(L_2(u)-L_2(0))U\|_{s-1}+\|(L_2(u)-L_2(0))U\|_{L^1}&\le C_\mu\|u\|_{s-1}(\|u\|_{s+1}+\|u_t\|_{s}), 
\end{align*}
where $L(u)U=(U_2,L_2(u)U)$.  Furthermore 
$$\|Q(u,D_{x,t}u))\|_{s-1}+\|Q(u,D_{x,t}u)\|_{L^1} \le C_\mu\|u\|_s \|u_t\|_{s-1}.$$
Now  writing system \eqref{pde} as $L(0)U=(0,L_2(0)-L_2(u)+Q(u,D_{x,t}u))$ and applying Corollary \ref{cor:lin} to $f=(L_2(0)-L_2(u))+Q(u,D_{x,t}u)$ with $s$ replaced by $s-1$ yields
\begin{equation}
	\label{eq:decay2}
	 \begin{aligned}
	\|u(t)\|_{s}&+\|u_t(t)\|_{s-1} \le C(1+t)^{-\frac{d}{4}}(\|u_0\|_{s}+\|u_0\|_{L^1}+\|u_1\|_{s-1}+\|u_1\|_{L^1})\\
&+C_\mu \sup_{\tau \in [0,t]} (\|u(\tau)\|_{s+1}+\|u_t(\tau)\|_s)\int_0^t(1+t-\tau)^{-\frac{d}{4}}(\|u(\tau)\|_s+\|u_t\|_{s-1} )d\tau.
\end{aligned}\end{equation}
As $t\to (1+t)^{-\frac{d}{4}}$ is square-integrable over $[0,\infty)$ for $d \ge 3$ this gives \eqref{eq:decay} as in e.g. \cite{kawa83}, proof of Proposition 3.3.
\end{proof}

\begin{proof}[Proof of Proposition \ref{prop:energy}]
From now $C_\mu$ always denotes some constant depending monotonically increasing on $\mu$, whose concrete value may change at every instance.
	
For $0<\eps<1$ let $J_\eps$ be the Friedrichs mollifier and set $V=(\Lambda u,u_t)$, $W:=W_{\eps}:=\Lambda^sJ_\eps (\Lambda  u,u_t)$ and
\begin{align*}
	\mM_u(x,\xi)&=\mM_{u(t)}(x,\xi)=\mathcal{M}(u(t,x),\xi)\\
	&=\begin{pmatrix}
		0 & \lxi I_n\\
		\big(-B(u,\xi)-A(u,\xi)\big)\lxi^{-1} &  C(u,\xi)-A^0(u)\end{pmatrix}.\\
\end{align*}
We start with the following observation.
\begin{lemma}
 $W$ satisfies the differential equation
	 \begin{equation} 
	 	\label{pde3}
	 	W_t=\Op[\mM_u] W+R_1,
	 \end{equation}
 	for some $R_1 \in L^2$ satisfying
 	\begin{equation}
 		\label{estR2}
 		\|R_1\| \le C_\mu \|V\|_{s}^2+C\|V\|_{s-1}
 	\end{equation}
\end{lemma}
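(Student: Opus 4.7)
The plan is to apply $\Lambda^s J_\eps$ to the first-order system \eqref{fos} rewritten in the unknown $V=(\Lambda u,u_t)^t$, identify the bulk of the right-hand side with $\Op[\mM_u]V$, and collect all discrepancies into $R_1$. Computing $V_t$ directly from \eqref{fos}, the top component $\Lambda V_2$ agrees with $\Op[\lxi]V_2$ modulo the infinitely smoothing operator $\Lambda-\Op[\lxi]$ (Lemma \ref{lem:is2}(i)), while the second component $u_{tt}$, read off \eqref{fos}, is a sum of products $F(u)\partial^\alpha V_i$ ($|\alpha|\le 2$, possibly with a prefactor $\Lambda^{-1}$) corresponding precisely to the entries of $\mM_u$, plus the quadratic $Q(u,D_{x,t}u)$. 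For each non-constant coefficient summand I would use the paralinearisation
\[
F(u)\partial^\alpha V_i = F(0)\partial^\alpha V_i + \Op[F_u-F_0]\partial^\alpha V_i + \bigl((F(u)-F(0))\partial^\alpha V_i - \Op[F_u-F_0]\partial^\alpha V_i\bigr),
\]
reassembling the first two summands via Lemma \ref{lem:is2}(i),(iii) into the corresponding entry of $\Op[\mM_u]V$. This yields a decomposition $V_t=\Op[\mM_u]V+\rho+(0,Q)^t+\sigma$ with $\sigma$ infinitely smoothing and $\rho$ a sum of paraproduct remainders.

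Next, Lemma \ref{lem:diffmult} applied with $A=F(u)-F(0)$, whose $H^k$- and $W^{1,\infty}$-norms are controlled by $C_\mu\|u\|_s$ via Lemma \ref{lem:base} (since $F_u-F_0$ vanishes at $u=0$), combined with the Sobolev embedding $H^{s-1}\hookrightarrow L^\infty$ (valid for $s-1>d/2$), yields $\|\rho\|_s\le C_\mu\|V\|_s^2$. A standard tame Moser estimate likewise delivers $\|Q\|_s\le C_\mu\|V\|_s^2$, since $Q$ is quadratic in first-order derivatives of $u$ and $s>d/2+1$. Because $\Lambda^s J_\eps$ is a Fourier multiplier of order $s$ whose symbol is in $S^s$ with $\eps$-uniform semi-norms (the mollifier contributing an $\eps$-uniformly $S^0$-bounded factor), $\Lambda^s J_\eps\colon H^s\to L^2$ is uniformly bounded in $\eps$, so the $\Lambda^s J_\eps(\rho+(0,Q)^t)$ contribution to $R_1$ is at most $C_\mu\|V\|_s^2$; the infinitely smoothing $\sigma$ piece contributes at most $C\|V\|_{s-1}$.

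Finally, I would commute $\Lambda^s J_\eps$ past $\Op[\mM_u]$:
\[
W_t = \Op[\mM_u]W + [\Lambda^s J_\eps,\Op[\mM_u]]V + \Lambda^s J_\eps\bigl(\rho+(0,Q)^t+\sigma\bigr).
\]
Splitting $\mM_u=\mM_0+(\mM_u-\mM_0)$, the constant-in-$x$ piece $\mM_0$ yields an infinitely smoothing commutator $[\Lambda^s J_\eps,\Op[\mM_0]]$ by Lemma \ref{lem:is2}(iii) (since the scalar Fourier multiplier $\Lambda^s J_\eps$ commutes with the matrix $\mM_0$). The residue $[\Lambda^s J_\eps,\Op[\mM_u-\mM_0]]$ is, by Proposition \ref{prop:product}, an operator of order $s$ whose $\mL(H^s,L^2)$-norm is bounded by the product of the $\eps$-uniform $S^s$-semi-norms of the symbol of $\Lambda^s J_\eps$ and the $\Gamma^1_1$-semi-norms of $\mM_u-\mM_0$, the latter being $\le C_\mu\|u\|_s\le C_\mu\|V\|_s$ by Lemma \ref{lem:base}. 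Hence $\|[\Lambda^s J_\eps,\Op[\mM_u]]V\|\le C_\mu\|V\|_s^2$, and summing the pieces gives $\|R_1\|\le C_\mu\|V\|_s^2+C\|V\|_{s-1}$. The main obstacle is the uniformity of all bounds in $\eps$, which relies on the $\eps$-uniform $S^0$-bounds for the mollifier symbol together with the fact that the quantitative product and adjoint estimates of Section 2.2 depend only on finitely many semi-norms of their arguments.
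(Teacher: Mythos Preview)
Your proposal is correct and follows essentially the same route as the paper's proof: write $V_t=\Op[\mM_u]V+\tilde R_1$ by splitting off the constant-coefficient part (handled via Lemma~\ref{lem:is2}), the paraproduct remainders (handled via Lemma~\ref{lem:diffmult}), and the quadratic term $Q$ (handled by Moser estimates); then apply $\Lambda^s J_\eps$ and control the commutator $[\Lambda^s J_\eps,\Op[\mM_u]]$ using the product calculus of Proposition~\ref{prop:product}/\ref{prop:central}(iii) together with the $\eps$-uniform $S^0$-boundedness of the mollifier symbol. The only cosmetic difference is that the paper packages the intermediate step via the conjugated operator $\tilde L(u)=\operatorname{diag}(\Lambda,I_n)\,L(u)\,\operatorname{diag}(\Lambda^{-1},I_n)$, whereas you compute $V_t$ component by component; the resulting error terms and the tools used to bound them coincide.
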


\begin{proof}
	Set  
	$$\tilde L(u):=\begin{pmatrix}
		\Lambda I_n&0\\ 0& I_n
	\end{pmatrix}L(u)\begin{pmatrix}
		\Lambda^{-1}I_n &0 \\ 0& I_n
	\end{pmatrix}.$$ 
	Then
	\begin{equation}
		\label{pde2}
		V_t=\Op[\mM_u]V+\tilde R_1
	\end{equation}
	where
	$$\tilde R_1=(\tilde L(u)-\Op[\mM_u])V+(0,Q(u,D_{x,t}u)).$$
	As we have already seen in the proof of Proposition \ref{prop:decay} (now with $s-1$ replaced by $s$)
	$$\|Q(u,D_{x,t}u)\|_{s} \le C_\mu\|V\|_{s}^2.$$
	By Lemma \ref{lem:is2}
	$$\|(\tilde L(0)-\Op[\mM_0])V\|_s \le C\|V\|_{s-1}$$
	and due to Lemma \ref{lem:is2} (iii) all terms appearing in 
	$$(\tilde L(u)- \tilde L(0)-\Op[\mM_u-\mM_0])V$$
	are of the form $(a(u)-\Op[a_u])f$, where $a$ is a smooth function with $a(0)=0$ and $f \in \{\partial^l_t\partial^\beta_xu|~l\le 1, l+|\beta| \le 2\} \subset H^{s-1} \hookrightarrow L^\infty$. Hence Lemma \ref{lem:diffmult} yields
	$$\|(\tilde L(u)- \tilde L(0)-\Op[\mM_u-\mM_0])V\|_s \le C_\mu (\|u\|_s \|V\|_{s}+\|V\|_{s-1}).$$
	In conclusion we have shown
	\begin{equation}
		\label{estR1}
		\|\tilde R_1\|_s \le C_\mu (\|V\|_{s}^2+\|V\|_{s-1}).
	\end{equation}
	
	Now apply $\Lambda^s J_\eps$ to \eqref{pde2} and obtain
	\begin{equation} 
		W_t=\Op[\mM_u] W+R_1,
	\end{equation}
	where 
	\begin{align*}
		R_1&=[\Lambda^sJ_\eps, \Op[\mM_u]]V+\Lambda^sJ_\eps \tilde R_1
	\end{align*}
	Note that $(J_\eps)_{\eps \in (0,1)}$ is a family of pseudo-differential operators, constant with respect to $x$, with symbols uniformly bounded in $S^0$. Thus we get from \eqref{estR1}
	$$\|\Lambda^sJ_\eps R_1\| \le C_\mu\|V\|_{s}^2+ C\|V\|_{s-1}$$
	and from Proposition \ref{prop:central} (iii)
	$$\|[\Lambda^sJ_\eps, \Op[\mM_u]]V\| \le C_\mu\|u\|_s\|V\|_s+C\|V\|_{s-1},$$
	which proves the assertion.
\end{proof}

Next, let $\mD_\infty \in C^\infty(\overline{B^n_r(0)} \times \{\xi \in \R^d: |\xi| \ge r\}, \C^{2n \times 2n})$ be the mapping constructed in Proposition \ref{lwn} and extend it trivially by zero to a function defined on $\overline{B^n_r(0)} \times \R^d:=\mU_0 \times \R^d$. Choose a function $\phi \in C^\infty(\R^d)$, with $0 \le \phi \le 1$, $\phi(\xi)=0$ for $|\xi| \le 2r$ and $\phi(\xi)=1$ for $|\xi| \ge 3r$. Set $$\mD(v,\xi):=\phi(\xi)\mD_\infty(v,\xi),\quad (v,\xi) \in \mU_0 \times \R^d$$
Let $\mu$ be sufficiently small such that $u(t,x) \in \overline{B^n_r(0)}$ for all $(t,x) \in [0,T] \times \R^d$ and define
$$\mD_u(x,\xi):=\mD_{u(t)}(x,\xi)=\mD(u(t,x),\xi), \quad (t,x,\xi) \in [0,T] \times \R^d \times \R^d.$$
Choose another function $\psi \in C^\infty(\R^d)$, with $0 \le \psi \le 1$,  $\psi(\xi)=0$ for $|\xi| \ge 5r$, $\psi(\xi)=1$ for $|\xi| \le 4r$ and define
$$\tilde \mD_u(x,\xi)=\mD_u(x,\xi)+\psi(\xi)I_{2n}.$$
\begin{lemma}
	\label{lem:proof2}
	The family of operators $(\mG_{u(t)})_{t \in [0,T]}$ defined by
	$$\mathcal\mG_{u(t)}:=\frac12(\Op[\tilde\mD_{u(t)}]+\Op[\tilde\mD_{u(t)}]^*)+\op[\tilde D_0]-\Op[\tilde D_0]$$
	is self-adjoint and uniformly positive definite in $\mL(L^2)$ for $\mu$ sufficiently small.  Furthermore 
		$$\frac12\frac{d}{dt}\big\langle \mG_u W,W\rangle = \Rep\langle \mG_u \Op[\mM_u]W,W\rangle+R_2,$$
	for some $R_2 \in \R$ with
	$$|R_2| \le C_\mu\|W\|(\|V\|_s^2+\|V\|_s\|W\|+\|V\|_{s-1}).$$
\end{lemma}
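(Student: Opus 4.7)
The plan is to treat the three assertions---self-adjointness, uniform positive definiteness, and the differential identity---in turn, with the cancellation $\op[\tilde D_0]-\Op[\tilde D_0]$ doing the essential work in the middle step.

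\emph{Self-adjointness.} The summand $\tfrac12(\Op[\tilde\mD_u]+\Op[\tilde\mD_u]^*)$ is the symmetric part of a bounded operator and hence self-adjoint by construction. Since $\tilde D_0(\xi)=\mD(0,\xi)+\psi(\xi)I_{2n}$ is hermitian and constant in $x$, $\op[\tilde D_0]$ is a Fourier multiplier with hermitian symbol and so self-adjoint, while Lemma \ref{lem:is2} gives $\Op[\tilde D_0]^*=\Op[\tilde D_0^*]=\Op[\tilde D_0]$. Hence $\op[\tilde D_0]-\Op[\tilde D_0]$ is self-adjoint and so is $\mG_u$.

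\emph{Uniform positive definiteness.} Writing $\tilde\mD_u=\tilde D_0+(\mD_u-\mD_0)$, the $\tilde D_0$-contribution to $\Rep\langle\Op[\tilde\mD_u]W,W\rangle$ is $\Rep\langle\Op[\tilde D_0]W,W\rangle$, which is cancelled exactly by the $-\Op[\tilde D_0]$ in the correction. What remains is
\[
\langle\mG_u W,W\rangle=\langle\op[\tilde D_0]W,W\rangle+\Rep\langle\Op[\mD_u-\mD_0]W,W\rangle.
\]
By the construction of $\phi,\psi$ and Proposition \ref{lwn}(i), $\tilde D_0(\xi)\ge c_* I_{2n}$ pointwise for some $c_*>0$, so the first summand is bounded below by $c_*\|W\|^2$. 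The function $F(y,\xi):=\mD(y,\xi)-\mD(0,\xi)$ lies in $S^0(\mU_0)$ with $F(0,\cdot)\equiv 0$, so Proposition \ref{prop:central}(i) gives $\|\Op[\mD_u-\mD_0]\|_{\mathcal L(L^2)}\le C\|u\|_s\le C\sqrt\mu$. For $\mu$ small enough we obtain $\langle\mG_u W,W\rangle\ge\tfrac{c_*}{2}\|W\|^2$.

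\emph{Differential identity.} Self-adjointness of $\mG_u$ and \eqref{pde3} yield, by the product rule,
\[
\tfrac12\tfrac{d}{dt}\langle\mG_u W,W\rangle=\Rep\langle\mG_u\Op[\mM_u]W,W\rangle+R_2,\qquad R_2=\Rep\langle\mG_u R_1,W\rangle+\tfrac12\langle\dot\mG_u W,W\rangle.
\]
Proposition \ref{prop:central}(i) bounds $\mG_u$ uniformly on $L^2$, so combined with the previous lemma's estimate $\|R_1\|\le C_\mu\|V\|_s^2+C\|V\|_{s-1}$ the first piece of $R_2$ is controlled by $C_\mu\|W\|(\|V\|_s^2+\|V\|_{s-1})$. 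Only the $u$-dependent part of $\mG_u$ contributes to $\dot\mG_u$, and Proposition \ref{prop:dev} applied with $s_0=s-1>d/2$ (possible since $s>d/2+1$) gives $\|\dot\mG_u\|_{\mathcal L(L^2)}\le C\|u_t\|_{s-1}\le C\|V\|_s$, hence $|\tfrac12\langle\dot\mG_u W,W\rangle|\le C\|V\|_s\|W\|^2$. Adding the two contributions matches the stated bound on $|R_2|$, with the three terms $\|V\|_s^2$, $\|V\|_s\|W\|$, $\|V\|_{s-1}$ produced by $\|R_1\|$, $\|\dot\mG_u W\|\|W\|$, and again by $\|R_1\|$ respectively.

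The only conceptually non-routine point is the cancellation used in the second step: recognising that the role of the correction $\op[\tilde D_0]-\Op[\tilde D_0]$ is precisely to swap the principal part of $\tfrac12(\Op[\tilde D_0]+\Op[\tilde D_0]^*)$ for the genuinely positive Fourier multiplier $\op[\tilde D_0]$, thereby transferring the pointwise lower bound $\tilde D_0(\xi)\ge c_* I_{2n}$ directly to operator positivity and avoiding the Gårding-type machinery of Proposition \ref{prop:gading}.
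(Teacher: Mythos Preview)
Your proof is correct and follows essentially the same route as the paper: you rewrite $\mG_u=\op[\tilde D_0]+\tfrac12(\Op[\tilde\mD_u-\tilde D_0]+\Op[\tilde\mD_u-\tilde D_0]^*)$ to get positivity, and you use the product rule together with \eqref{pde3}, \eqref{estR2} and Proposition~\ref{prop:dev} for the differential identity. The only cosmetic difference is that you invoke Proposition~\ref{prop:dev} at regularity $s-1>d/2$ rather than $s$; both yield the required bound $\|\dot\mG_u\|_{\mathcal L(L^2)}\le C\|V\|_s$.
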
 

\begin{proof}
 By Proposition \ref{lwn} $\tilde \mD, \mD \in S^0(\mU)$ and $\tilde\mD_u=\tilde\mD_u^*$ is uniformly positive definite. In particular, $\op[\tilde\mD_0]=\op[\tilde \mD_0]^*$ is a self-adjoint and uniformly positive definite operator on $\mL(L^2)$ (cf. Lemma \ref{lem:is2}). Due to ibid. also $\Op[\tilde\mD_0]^*=\Op[\tilde \mD_0]$, i.e.
$$\mG_u=\op[\tilde\mD_0]+\frac{1}{2}(\Op[\tilde\mD_u-\tilde\mD_0]+\Op[\tilde\mD_u-\tilde\mD_0]^*).$$
 Proposition \ref{prop:central} (i) gives
$$\|\Op[\tilde\mD_u-\tilde\mD_0]\|_{\mathcal L(L^2)} \le C_\mu\|u\|_s,$$
which yields the first assertion.

Now apply $\mG_u$ to \eqref{pde3}, take the $L^2$ scalar product with $W$ and consider the real part to find
\begin{equation}
	\label{est100}
	\begin{split}
	\Rep\langle \mG_u W_t,W\rangle&=\Rep\langle \mG_u \Op[\mM_u]W,W\rangle+\Rep\langle \mG_uR_1,W\rangle:=\Rep\langle \mG_u \Op[\mM_u]W,W\rangle+R_{21}.
	\end{split}
\end{equation}
Due to \eqref{estR2} and $\|\mathcal G_u\|_{\mL(L^2)} \le C_\mu$, 
\begin{equation}
	\label{estR}
	\|R_{21}\| \le C_\mu\|W\|(\|V\|_{s}^2+\|V\|_{s-1}).
\end{equation}
As $\mG_u$ is self-adjoint we get
\begin{equation}
	\label{estT}
	\Rep \langle \mG_u W_t,W\rangle=\frac12\frac{d}{dt}\big\langle \mG_u W,W\rangle-\Rep\big\langle \big(\frac{d}{dt}\mG_u\big) W,W\rangle
\end{equation}
and \ref{prop:dev} (iv) yields
\begin{equation} 
	\label{estT2}
	2\|\frac{d}{dt}\mG_u\|_{\mathcal L(L^2)} \le \big\|\frac{d}{dt}\Op[\tilde\mD_u]\big\|_{\mathcal L(L^2)}\le C_\mu\|u_t\|_s.
\end{equation}
The second statement then clearly follows from \eqref{est100}-\eqref{estT2}.
\end{proof}

The last step consists in showing the following.

\begin{lemma}
	\label{lem:proof3}
	It holds
	$$\Rep \langle \mG_u\Op[\mM_u]W,W \rangle \le -c\|W\|^2+C_\mu\|W\|^2(\|u\|_{s+1}^\frac12+\|u\|_s)+C_\mu\|W\|_{-1}^2).$$
\end{lemma}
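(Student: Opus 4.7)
The strategy is to reduce $\mG_u\Op[\mM_u]$ by symbolic calculus to a single para-differential operator $\Op[F_u]$ whose symbol $F_u$ is pointwise self-adjoint and uniformly negative semi-definite for $|\xi|$ large, and then apply the modified G\r{a}rding inequality of Proposition \ref{prop:gading} to $-F_u-c_\infty I_{2n}$.

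For the first stage, the contribution $\op[\tilde\mD_0]-\Op[\tilde\mD_0]$ in the definition of $\mG_u$ is infinitely smoothing by Lemma \ref{lem:is2}, so after composition with $\Op[\mM_u]$ its quadratic form against $W$ is controlled by $C\|W\|_{-1}^2$. For the remaining part $\tfrac12(\Op[\tilde\mD_u]+\Op[\tilde\mD_u]^*)$, Proposition \ref{prop:central}(ii) applied to the hermitian symbol $\tilde\mD_u\in S^0(\mU)$ together with Proposition \ref{prop:central}(iii) applied to the product $\tilde\mD_u\cdot\mM_u$ yield
\[
\mG_u\Op[\mM_u] \;=\; \Op[\tilde\mD_u\mM_u] + E_1 + K,
\]
with $\|E_1\|_{\mL(L^2)}\le C_\mu\|u\|_s$ and $K$ infinitely smoothing. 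One more application of Proposition \ref{prop:central}(ii), now to the first-order symbol $\tilde\mD_u\mM_u$, converts $\Op[\tilde\mD_u\mM_u]+\Op[\tilde\mD_u\mM_u]^*$ into $\Op[F_u]$ modulo another $\mL(L^2)$-remainder of norm $\le C_\mu\|u\|_s$, where
\[
F_u := \tilde\mD_u\mM_u + (\tilde\mD_u\mM_u)^* \;\in\; S^1(\mU)
\]
is pointwise self-adjoint. Collecting these identities gives
\[
\Rep\langle\mG_u\Op[\mM_u]W,W\rangle = \tfrac12\,\Rep\langle\Op[F_u]W,W\rangle + R_1,\qquad |R_1|\le C_\mu\|u\|_s\|W\|^2 + C\|W\|_{-1}^2.
\]

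For the second stage, the construction of $\mD_\infty$ in Proposition \ref{lwn} together with the choices of the cut-offs $\phi,\psi$ ensures $\tilde\mD_u(x,\xi)=\mD_\infty(u(t,x),\xi)$ for $|\xi|\ge 5r$, and hence $F_u(x,\xi)\le -c_\infty I_{2n}$ there. Therefore $G_u := -F_u - c_\infty I_{2n}\in S^1(\mU)$ satisfies $G_u+G_u^*\ge 0$ for $|\xi|$ large, so Proposition \ref{prop:gading} applies: writing $\tilde s := s-1 > d/2$ gives $u(t)\in H^{\tilde s+2}=H^{s+1}$, and with $m=1$ one has $(m-1)/2=0$, so (taking $q=1$)
\[
\langle(\Op[G_u]+\Op[G_u]^*)W,W\rangle \;\ge\; -C\|u\|_{s+1}^{1/2}\|W\|^2 - c_1\|W\|_{-1}^2.
\]
Unwinding this, using Lemma \ref{lem:is2} to replace $\Op[c_\infty I_{2n}]$ by $c_\infty I_{2n}$ modulo smoothing, converts the lower bound into
\[
\Rep\langle\Op[F_u]W,W\rangle \;\le\; -c_\infty\|W\|^2 + \tfrac{C}{2}\|u\|_{s+1}^{1/2}\|W\|^2 + C'\|W\|_{-1}^2.
\]

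Feeding this into the identity of the first stage and setting $c:=c_\infty/2$ yields the stated inequality. The main technical obstacle will be the careful bookkeeping of the remainders in the first stage: each application of Propositions \ref{prop:adj} and \ref{prop:product} has to furnish the multiplicative factor $\|u\|_s$ in the operator-norm bound (rather than merely $C_\mu$), which is available only via the refined continuity statements of Section 2. A second delicate point is the exponent $\tfrac12$ on $\|u\|_{s+1}^{1/2}$: it is exactly what the modified G\r{a}rding inequality of Proposition \ref{prop:gading} supplies; the classical strong G\r{a}rding inequality would deliver $\|u\|_{s+1}$ on the right-hand side, which would be insufficient for closing the nonlinear energy bootstrap of Section 4.
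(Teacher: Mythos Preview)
Your proposal is correct and follows essentially the same route as the paper: reduce $\mG_u\Op[\mM_u]$ via Proposition~\ref{prop:central}(ii),(iii) to $\Op$ of a single first-order symbol whose self-adjoint part is $\le -c_\infty I_{2n}$ for large $|\xi|$, then invoke the modified G\r{a}rding inequality (Proposition~\ref{prop:gading}) with $m=1$, $\tilde s=s-1$. The only cosmetic differences are that the paper first strips off the compactly-supported-in-$\xi$ part $\psi I_{2n}$ (passing from $\tilde\mD_u$ to $\mD_u$) and then applies Proposition~\ref{prop:gading} directly to $-\mathcal X_u=-\mD_u\mM_u-\tfrac{c_\infty}{2}I_{2n}$, letting the first step of that proposition's proof perform the symmetrization you carry out explicitly beforehand; your version keeps $\tilde\mD_u$ throughout and symmetrizes first, which is equally valid since the two symbols agree for $|\xi|\ge 5r$.
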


From Lemmas \ref{lem:proof2}, \ref{lem:proof3} we obtain
\begin{equation}
	\label{eq:last}
	\frac12\frac{d}{dt}\langle \mG_uW,W\rangle +c\|W\|^2 \le C_\mu\|W\|(\|V\|_s^2+\|V\|_s\|W\|+\|V\|^{\frac12})+C_\mu(\|V\|_{s-1}^2+\|W\|_{-1}^2).
\end{equation}

As $\Lambda^{-k}W=\Lambda^{-k}W_\eps \to V$ as $\eps \to 0$ uniformly with respect to $t$ for $0 \le k \le s$ and $\mG_u$ is uniformly postive definite, we find by integrating \eqref{eq:last}
$$\|V(t)\|_s^2+\int_0^t \|V\|_s^2 ~d\tau \le C_\mu(\|V(0)\|+\int_0^t \|V(\tau)\|_s^3+\|V(\tau)\|_s^{\frac52}+\|V(\tau)\|_{s-1})d\tau,~~t \in [0,T],$$
which yields the assertion since $\|V\|_s^2=\|u\|_{s+1}^2+\|u_t\|^2$.
\end{proof}

\begin{proof}[Proof of Lemma \ref{lem:proof3}]
	
Set $\kappa:=\op[\tilde \mD_0]-\Op[\tilde \mD_0]$, which is infinitely smoothing. Then
$$\mG_u=\frac12(\Op[\tilde \mD_u]+\Op[\tilde \mD_u]^*)+\kappa.$$ 
As $\|\mM_u\|_{\mathcal L(H^{l},H^{l-1})} \le C_\mu$, $l \in \R$, due to Proposition \ref{prop:central} (i)  we find
\begin{align*}
		\Rep\langle \kappa\Op[\mM_u]W,W \rangle &\le C_\mu \|W\|_{-1}^2
\end{align*}
By construction $\tilde \mD_u=\tilde \mD_u^*$ and thus \ref{prop:central} (ii) yields
\begin{align*}
	\Rep\big\langle(\frac12(\Op[\tilde\mD_u]^*-\Op[\mD_u])\Op[\mM_u]W,W\big\rangle\le C_\mu\|u\|_s  \|W\|^2.
	\end{align*}
Next note that $\tilde \mD_u(x,\cdot)-\mD_u(x,\cdot)$ is compactly supported with support not depending on $t$. Therefore
 $\Op[\tilde \mD_u-\mD_u]$ is infinitely smoothing and 
$$\Rep \langle \Op[\tilde \mD_u-\mD_u]\Op[\mM_u] W,W \rangle \le C_\mu\|W\|_{-1}^2.$$
In conclusion
\begin{equation} 
	\label{est1}
	\begin{split}
	&\Rep\langle \mG_u\Op[\mM_u] W,W\rangle=\Rep \langle \Op[\mD_u]\Op[\mM_u]W,W\rangle+\Rep\langle \kappa\Op[\mM_u]W,W \rangle\\
	&+\frac12\Rep\langle(\Op[\tilde\mD_u]^*-\Op[\tilde\mD_u])\Op[\mM_u])W,W\rangle+\Rep\langle \Op[\tilde \mD_u-\mD_u]\mM_u W,W \rangle\\
	& \le \Rep \langle \Op[\mD_u]\Op[\mM_u]W,W\rangle+C_\mu(\|u\|_s\|W\|^2+\|W\|_{-1}^2)
	\end{split}
\end{equation}
By Proposition \ref{prop:central} (iii)
$$\|(\Op[\mD_u]\Op[\mM_u]-\Op[\mD_u\mM_u])W\| \le C_\mu \|u\|_s \|W\|+C\|W\|_{-1}.$$
Hence
\begin{equation}
	\label{est2}
	\begin{split}
	\Rep \langle \Op[\mD_u]\Op[\mM_u]W,W\rangle &\le \Rep \langle \Op[\mD_u\mM_u]W,W \rangle+C_\mu \|u\|_s\|W\|^2+C\|W\|_{-1}^2.
	\end{split}
\end{equation}
Set $\mathcal X_u:=\mD_u\mM_u+c_\infty/2I_{2n}$ with $c_\infty$ as in Lemma \ref{lwn}. Note that $c_\infty$  does not depend on $\mu$. 
Since $\Op[I_{2n}]-{\rm Id_{\mL^2}}$ is infinitely smoothing we conclude
\begin{equation} 
	\label{rhoxi1}
	\Rep \langle \Op[\mD_u\mM_u]W,W \rangle\le\Rep \langle \Op[\mathcal X_u]W,W\rangle-\frac{c_\infty}{2}\|W\|^2+C\|W\|_{-1}^2.
\end{equation}
By Proposition \ref{lwn}
$$\mathcal X_u(x,\xi)+\mathcal X_u^*(x,\xi) =\mD_u\mM_u(x,\xi)+(\mD_u\mM_u)(x,\xi)^*+c_\infty \le 0,$$
for $x \in \R^d$ und $\xi \in \R^d$ with $|\xi| \ge 3r$.
Since $u \in H^{s+1}$ and $s+1 \ge d/2+2$, Proposition \ref{prop:gading} applied to $-\mathcal X_u$ gives 
\begin{equation}
	\label{eq:garding100}
	\Rep \langle \Op[\mathcal X_u]W,W \rangle \le C_\mu(\|u\|_{s+1}^\frac12\|W\|^2+\|W\|_{-1}).
\end{equation}
\eqref{rhoxi1} and \eqref{eq:garding100} lead to
\begin{equation}
	\label{est4}
	\Rep \langle \Op[\mD_u\mM_u]W,W \rangle\le-c\|W\|^2+C_\mu(\|u\|_{s+1}^{\frac12}\|W\|^2+\|W\|_{-1}^2)
\end{equation}
for $c$ independent of $u$. Clearly the assertion follows from \eqref{est1}, \eqref{est2}, \eqref{rhoxi1} and \eqref{est4}.
\end{proof}

\section{A class of examples from dissipative relativistic fluid dynamics}

We consider the Euler-augmented Navier-Stokes formulation of dissipative relativistic fluid dynamics on flat Minkowski space-time derived in \cite{Fre20} as a generalization of a model proposed in \cite{bem18}. For barotropic fluids it consists of a system of four equations which, using Einstein's summation convention, read 
\begin{equation} 
	\label{eq:baro}
	A^{\alpha\beta\gamma}(\psi^\eps)\frac{\partial \psi_{\gamma}}{\partial x^\delta}=\frac{\partial}{\partial x^\beta}\big(B^{\alpha\beta\gamma\delta}(\psi^\eps)\frac{\partial \psi_\gamma}{\partial x^\delta}\big),\quad \alpha=0,1,2,3,
\end{equation}
where all Greek indices run from $0$ to $3$, $A^{\alpha\beta\gamma}, B^{\alpha\beta\gamma\delta}$ are contravariant tensors and the unknown function $\psi^\epsilon=(\psi^0,\psi^1,\psi^2,\psi^3)^t$ determining the state of the fluid  is a $4$-vector with respect to the Minkowski-metric of flat space-time. More specifically  $\psi^\epsilon=u^\epsilon/\theta$ with $u^\epsilon$ being the $4$-velocity, $\theta$  the temperature of the fluid. We show that the results of the present work imply non-linear stability of the homogeneous reference state $\bar{\psi}=\bar u^\epsilon/\bar \theta$, where $\bar u^\epsilon=(1,0,0,0)$ represents the fluid's rest frame and $\bar \theta>0$ is a constant temperature. 

For a fluid with equation of state $p=\rho/r, 1\le r<\infty$, $p$ being the pressure, $\rho$ the specific internal energy, the coefficent matrices evaluated at $\bar \psi$ are given by \cite{Fre20} (w.lo.g. assume $\bar \theta=1$)\footnote{Here $e^1,e^2,e^3$ and $\delta^{ij}$ denote the conanical basis of $\R^3$  and the Kronecker symbol, respectively.}
\begin{align*}
A^0(\bar \psi)&=\begin{pmatrix}
	r & 0\\
	0& I_3
\end{pmatrix},\quad A^j(\bar \psi)=\begin{pmatrix}
 0 & (e^j)^t\\
 e^j & 0
\end{pmatrix},\quad
B^{00}(\bar \psi)=\begin{pmatrix}
	-r^2\mu &0 \\
	0 & -\nu I_3
\end{pmatrix},\\
B^{0j}(\bar \psi)&=B^{j0}(\bar \psi)=\frac12\begin{pmatrix} 0 &-(\mu r+\nu)(e^j)^t \\
-(\mu r+\nu)e^j &0\\
\end{pmatrix},\\
B^{ij}(\bar \psi)&=\begin{pmatrix}
	-\nu\delta^{ij} & 0\\
	0 & \eta \delta^{ij}+\frac12(-\mu+\frac13\eta+\zeta)(e^i\otimes e^j+e^j\otimes e^i)
\end{pmatrix}, \quad i,j=1,2,3,
\end{align*}
where $\eta,\zeta >0$ quantify the fluid's viscosity, $\nu,\mu >0$ with $\mu>\tilde{\eta}:=\frac43\eta+\zeta$  reflect a frame change and $A^\beta(\psi^\eps):=(A^{\alpha\beta\gamma}(\psi^\eps))_{0\le \alpha,\gamma \le 3}, B^{\beta\gamma}(\psi^\eps):=(B^{\alpha\beta\gamma\delta}(\psi^\eps))_{0 \le \alpha,\gamma \le 3}$, $\beta,\delta =0,\ldots,3$.

We do not give the detailed non-linear formulation at this point and just refer to \cite{Fre20}. The only information we need for the argumentation below is the fact that for all $\beta,\delta=0,\ldots,3$ and all states $\psi^\eps$ the coefficient matrices $A^\beta(\psi^\eps)$, $B^{\beta\delta}(\psi^\eps)$, $\beta,\delta=0,\ldots,3$ are  symmetric (cf. ibid.).

We show (H$_A$), (H$_B$), (D) for the matrices $(-B^{00})^{-1}B^{\beta\delta}$, $(-B^{00})^{-1}A^\beta$.

(H$_A$) is straightforward: As $-B^{00}(\psi^\epsilon)$, $A^0(\psi^\epsilon)$ are positive definite at $\psi^\eps=\bar \psi$ and symmetric for all states they are symmetric positive definite also in a neighbourhood of $\bar \psi$. Thus (H$_A$) (a) is satisfied with $F_A(u)=-B^{00}(u)$ and (H$_A$) (b) with $H(u)=A^0(u)$. 

Regarding (H$_B$) Freistühler proved ibid. that at the reference state $\psi^\eps=\bar \psi$ for each $\bomega \in \mathbb S^{2}$ the matrix 
$$\tilde{\mathcal B}(\psi^\eps,\bomega)=\begin{pmatrix}
	0 & I_4\\
	-(-B^{00})^{-\frac12}B(\psi^\eps,\bomega)(-B^{00})^{-\frac12} & i(-B^{00})^{-\frac12}C(\psi^\eps,\bomega)(-B^{00})^{-\frac12}
\end{pmatrix},$$
where
\begin{align*}
	B(\psi^\eps,\bomega)&=\sum_{ij=0}^dB^{ij}(\psi^\eps)\omega_i\omega_j,\quad C(\psi^\eps, \bomega)=2\sum_{j=0}^dB^{0j}(\psi^\eps)\omega_j, ~~\bomega=(\omega_1,\ldots,\omega_2) \in \mathbb{S}^2,\\
\end{align*}
 has four simple and two semi-simple  purely imaginary eigenvalues. This is then also true for
 $$
 	\mB(\psi^\eps,\bomega):=\begin{pmatrix}
 		0 & I_4\\
 		(-B^{00})^{-1}B(\psi^\eps,\bomega) & i(-B^{00})^{-1}C(\psi^\eps,\bomega),
 	\end{pmatrix}=\mT^{-1} \tilde{\mB}(\psi^\eps,\bomega)\mT
$$
with $\mT=\operatorname{diag}((-B^{00})^{\frac12},(-B^{00})^{\frac12})$.
Now in the present context the geometric multiplicities of purely imaginary eigenvalues of $\mB(\psi^\eps,\bomega)$  are state invariant properties. Therefore there exists a symbolic symmetrizer of $\mB$  due to Remark \ref{rem:multis}. \\
To see this invariance note that (even in the general setting in Section 3) the eigenvectors $v=v(u,\bomega) \in \C^{2n}\setminus\{0\}$ to an eigenvalue $\lambda=\lambda(u,\bomega) \in \C$ of $\mathcal{B}(u,\bomega)$ are exactly of the form
$v=(v_1, \lambda v_1)$ with $v_1 \in \C^n$ such that $e^{\lambda t+i\bxi}v_1$ is a plane wave solution to the linearization of \eqref{shhs1} at $u$.  As \eqref{eq:baro} is a covariant expression, $e^{\lambda t+i\xi}v$ being a plane wave solution with $\lambda \in i\R$  is also a covariant property (cf. e.g. \cite{FRT}).

It remains to show (D1), (D2), (D3). In the following we only consider matrices evaluated at $\bar \psi$.  The Fourier-symbols correpsonding to the differential operators in \eqref{eq:baro} are given by 
\begin{align*} A(\bomega)&=\sum_{j=1}^dA^{j}\omega_j=\begin{pmatrix}
		0 & \bomega^t\\
		\bomega & 0
	\end{pmatrix},~~B(\bomega)=\sum_{j,k=1}^dB^{jk}\omega_j\omega_k=\begin{pmatrix}
			-r^2\mu & 0\\
			0& \eta +(-\mu +\frac13 \eta+\zeta)\bomega \otimes \bomega
		\end{pmatrix},\\
		C(\bomega)&=2\sum_{j=1}^dB^{0j}\xi_j\begin{pmatrix}
			0 & -(\mu r+\nu)\bomega^t\\
			-(\mu r+\nu)\bomega &0
		\end{pmatrix},~~\bomega=(\omega_1,\omega_2,\omega_3) \in \mathbb S^{d-1}.
\end{align*}
It is straightforward to see that for any $\bomega \in \mathbb S^{d-1}$ the matrices $A^0,A^j(\bomega), B^{00}, B^{jk}(\bomega), C(\bomega
)$ all decompose in sense of linear operators as $A^0=A^0_l\oplus A^0_t$, $A(\bomega)=A_l\oplus A_t$, $B^{00}=B^{00}_l\oplus B^{00}_t$,  $B(\bomega)=B_l \otimes B_t$, $C(\bomega)=C_l \oplus C_t$ with respect to the orthogonal decomposition $\C^4=(\C \times \bomega \C)\oplus(\{0\} \times \{\bomega\}^{\perp})$. Thus we can verify the conditions for $A^0_l, A_l, B^{00}_l, B_l, C_l$ and $A^0_t, A_t, B^{00}_t, B_t, C_t$ separately. We have
$$ A^0_t=I_2,~~A_t=0,~~B^{00}=-\nu I_2,~~B_t=\eta I_2,~~C_t=0.$$
As $\eta>0$, these matrices correspond to coefficients of  damped wave equations and it is well-known that such equations satisfy (D). One can also check this easily by virtue of \cite{FS}, Theorem 4 and Lemma 5.

Next
\begin{align*}A^0_l&=\begin{pmatrix} 
	r & 0\\
	0& 1 
\end{pmatrix},\quad A_l=\begin{pmatrix}
0 & 1\\
1&0
\end{pmatrix},\\
B^{00}_l&=\begin{pmatrix}
	-r^2\mu & 0\\
	0 & -\nu
\end{pmatrix},\quad B_l=\begin{pmatrix}
-\nu & 0\\
0 & \tilde \eta -\mu
\end{pmatrix},\quad C_t=\begin{pmatrix}
0 & -(\mu r+\nu)\\
-(\mu r+\nu) &0
\end{pmatrix}.
\end{align*}
It was shown in \cite{FS} that 
$$\tilde A^j=(-B^{00}_l)^{-\frac12}A^j_l(-B^{00}_l)^{-\frac12},\quad \tilde B^{jk}_l=(-B^{00}_l)^{-\frac12}B^{jk}_l(-B^{00}_l)^{-\frac12}$$
 satisfy (D). But then also $\check A^j:=(-B^{00}_l)^{-1}A^j_l, \check B^{jk}:=(-B^{00}_l)^{-1}B^{jk}_l$ satisfy (D). 

To see this note that $\check A^j=S\tilde A^jS^{-1}$, $\check B^{jk}=S\tilde B^{jk}S^{-1}$ with $S=(-B^{00})^{\frac12}$. Hence we have $\check W_0=S \tilde W_0 S^{-1}$ for $\check W_0$ and $\tilde W_0$ as in (D1) for the matrices $\check A^j, \check B^{jk}$ and $\tilde A^j, \tilde B^{jk}$, respectively. Further  the  symbolic symmetrizer $\tilde H=\tilde A^0$ of $(\tilde A^0)^{-1}\tilde A(\bomega)$ the matrix $\check H=S^{-1}\tilde A^0 S^{-1}$ is a symbolic symmetrizer for $(\check A^0)^{-1}\check A(\bomega)$. This yields $\check W_1=S^{-1}\tilde W_1S^{-1}$ with $\check W_1, \tilde W_1$ as in (D1) for the respective matrices. If now $v$ is an eigenvector of $\check W_0$, $S^{-1}v$ is an eigenvector of $\tilde W_0$ and as $\tilde A^j,\tilde B^{jk}$ satisfy (D1) we get
$$\langle (\tilde W_1+\tilde W_1^*)S^{-1}v,S^{-1}v\rangle \le -c|S^{-1}v|^2 \le -\check c|v|^2,$$
i.e. $\langle (\check W_1+\check W_1^*)v,v \rangle \le -\check c|v|^2,$ which proves (D1) for $\check A^j, \check B^{jk}$.

(D2) follows analogously since with $\mS=\operatorname{diag}((-B^{00})^\frac12,(-B^{00})^{\frac12})$ the matrix $\mS^{-1} \tilde\mH(\bomega) \mS^{-1}$ is a symbolic symmetrizer for $\check \mB(\bomega)$ if $\tilde \mH(\bomega)$ is a symbolic symmetrizer for $\tilde \mB(\bomega)$.

Lastly, (D3) is satisfied trivially, as the matrices introduce equivalent systems of PDEs and thus solutions to the dispersion relation are identical for the two systems. 

\section*{Statements and declarations} 

\textbf{Funding.} This work was supported by DFG Grants No.\ FR 822/10-1, 10-1/2)

\textbf{Competing interests.} The author has no competing interests to declare that are relevant to the content of this article.

\textbf{Acknowledgement.} The author would like to sincerely thank Heinrich Freistühler for his highly helpful suggestions and comments as well as many fruitful discussions.

\bibliographystyle{abbrv}
\bibliography{Manuscript_Sroczinski}

\end{document}